\documentclass[reqno, 11pt]{amsart}

\usepackage[text={155mm, 235mm},centering]{geometry}
\geometry{a4paper}
\input diagxy
\input xy

\usepackage[active]{srcltx} 
\linespread{1.2}

\vfuzz2pt
\usepackage[mathscr]{eucal}
\usepackage[all]{xy}
\usepackage{mathrsfs}
\usepackage{xypic}
\usepackage{amsfonts,mathtools}
\usepackage{amsmath}
\usepackage{amsthm}
\usepackage{amssymb}
\usepackage{latexsym}
\usepackage{tabularx}
\usepackage{graphicx}
\usepackage{pict2e}
\usepackage[pagebackref]{hyperref}
\usepackage{tikz}
\usepackage[toc,page]{appendix}
\usepackage[scr=rsfs]{mathalfa}
\definecolor{ceruleanblue}{rgb}{0.16, 0.32, 0.75}
\hypersetup{colorlinks=true,allcolors=ceruleanblue}
\setcounter{tocdepth}{1}

\newtheorem{thm}{Theorem}[section]
\newtheorem{cor}[thm]{Corollary}
\newtheorem{lem}[thm]{Lemma}
\newtheorem{prop}[thm]{Proposition}

\theoremstyle{definition}
\newtheorem{defn}[thm]{Definition}
\theoremstyle{property}

\theoremstyle{remark}
\newtheorem{rem}[thm]{Remark}

\newtheorem{prob}[thm]{Problem}
\newtheorem{ex}[thm]{Example}

\DeclareMathOperator{\E}{\mathscr{E}}

\DeclareMathOperator{\K}{\mathcal{K}}

\DeclareMathOperator{\Id}{\mathrm{id}}

\allowdisplaybreaks

\numberwithin{equation}{section} 
\begin{document}

\title[]{Bott-Chern blow-up formula and bimeromorphic invariance of the $\partial\bar{\partial}$-Lemma for threefolds}

\author[S. Yang]{Song Yang}
\address{Center for Applied Mathematics, Tianjin University, Tianjin 300072, P. R.  China}%
\email{syangmath@tju.edu.cn}%

\author[X. Yang]{Xiangdong Yang}
\address{Department of Mathematics, Chongqing University, Chongqing 401331, P. R. China}
\email{math.yang@cqu.edu.cn}

\date{\today}


\begin{abstract}
The purpose of this paper is to study the bimeromorphic invariants of compact complex manifolds in terms of Bott-Chern cohomology.
We prove a blow-up formula for Bott-Chern cohomology.
As an application, we show that for compact complex threefolds the non-K\"{a}hlerness degrees,
introduced by Angella-Tomassini [Invent. Math. 192, (2013), 71-81], are bimeromorphic invariants.
Consequently, the $\partial\bar{\partial}$-Lemma on threefolds admits the bimeromorphic invariance.
\end{abstract}

\subjclass[2010]{32Q55; 32C35}

\keywords{Bott-Chern cohomology, blow-up, bimeromorphic invariant, $\partial\bar{\partial}$-Lemma}

\maketitle

\section{Introduction}

Let $X$ be a complex manifold then it admits a huge space of smooth Hermitian metrics.
In general, to study the geometry and topology of $X$ it is of usefulness to pick a special Hermitian metric on $X$,
such as the K\"{a}hler metric and the balanced metric.
Among these metrics, the most distinguished ones on complex manifolds are K\"{a}hler metrics.
In the past decades, there have been extensive study of K\"{a}hler geometry from differential and algebraic geometry points of view.
The existence of a K\"{a}hler metric on a compact complex manifold implies many special topological consequences,
and one of them is the $\partial\bar{\partial}$-Lemma.
According to the real homotopy theory, a direct result of the $\partial\bar{\partial}$-Lemma
is that the real homotopy type of a compact K\"{a}hler manifold is a formal consequence of the de Rham cohomology ring (cf. \cite{DGMS75}).
Following the notion of Popovici \cite{Pop15}, we say that a compact complex manifold $X$ is a \emph{$\partial\bar{\partial}$-manifold} if the $\partial\bar{\partial}$-Lemma holds on $X$.
It is noteworthy that the class of $\partial\bar{\partial}$-manifolds consists of Moishezon manifolds and compact complex manifolds in the class $\mathcal{C}$ of Fujiki, which are not K\"{a}hler in general (cf. \cite{Moi66,Fuj78}).
On one hand, a great number of examples of $\partial\bar{\partial}$-manifolds that are known so far carry the \emph{balanced metrics}.
On the other hand, there exist many examples of balanced manifolds on which the $\partial\bar{\partial}$-Lemma does not hold, for instance,
a classical example is the Iwasawa manifold (cf. \cite{Ang13}).

In complex geometry, there exist two important geometric operations: modification and deformation.
These operations provide us with useful methods to produce new complex manifolds from the original ones.
In 1960, Hironaka \cite{Hir60} constructed a compact non-K\"{a}hler threefold $X$, with a modification
$f:X\rightarrow\mathbb{C}\mathrm{P}^{3}$,
on which the $\partial\bar{\partial}$-Lemma holds.
This means that the K\"{a}hler metric on a compact complex manifold is not preserved by modifications.
On balanced metric, Alessandrini-Bassanelli \cite{AB93,AB95} proved that the balanced structure is stable under modifications.
Moreover, Popovici \cite{Pop13} showed that the strongly Gauduchon (sG) structure on a compact complex manifold admits the stability under modifications.
On the small deformations of K\"{a}hler manifolds,
a classical result of Kodaira-Spencer \cite{KS60} shows that small deformations of a compact K\"{a}hler manifold remain K\"{a}hler;
see also Rao-Wan-Zhao \cite{RWZ16,RWZ18} for a new proof.
The example of Iwasawa manifold shows that the balanced structure is not preserved under small deformations (cf. \cite{AB90});
whereas, it is stable under small deformations when the manifold satisfies some versions of the $\partial\bar{\partial}$-Lemma
(cf. \cite{FY11, RWZ16,Wu06} etc).
Particularly, the $\partial\bar{\partial}$-Lemma itself admits the stability under small deformations (cf. \cite{Voi02,Wu06,AT13}).

Assume that $X$ and $\tilde{X}$ are two compact complex manifolds with the same dimensions and let $f:\tilde{X}\rightarrow X$ be a modification.
According to a result of Deligne-Griffiths-Morgan-Sullivan \cite[Theorem 5.22]{DGMS75}, we know that if the $\partial\bar{\partial}$-Lemma holds for $\tilde{X}$ then so does for $X$ (see also \cite[Theorem 12.9]{Dem12}).
Conversely, it is natural to consider the following question (see also \cite{Ale17}):
if the $\partial\bar{\partial}$-Lemma holds for $X$, then whether it also holds for $\tilde{X}$?
In general, modifications are special cases of  bimeromorphic maps, we have the following
\begin{prob}\label{problem}
Is the $\partial\bar{\partial}$-Lemma a bimeromorphic invariant of compact complex manifolds?
Here the bimeromorphic invariance means that if $f:\tilde{X}\dashrightarrow X$ is a bimeromorphic map of compact complex manifolds then the $\partial\bar{\partial}$-Lemma holds on $\tilde{X}$ if and only if so does on $X$.
\end{prob}

In 2002, Abramovich-Karu-Matsuki-W{\l}odarczyk \cite[Theorem 0.3.1]{AKMW02} proved the Weak Factorization Theorem which asserts that any bimeromorphic map of compact complex manifolds $f:\tilde{X}\dashrightarrow X$ can be decomposed into a finite sequence of blow-ups and blow-downs with smooth centers.
Therefore, to prove the bimeromorphic invariance of the $\partial\bar{\partial}$-Lemma,
it is sufficient to show that the $\partial\bar{\partial}$-Lemma is stable under blow-ups.
From cohomogical point of view, Angella-Tomassini \cite{AT13} showed that the validity of the $\partial\bar{\partial}$-Lemma on a compact complex manifold can be characterized by Bott-Chern cohomology.
This implies that to prove that the blow-up of a $\partial\bar{\partial}$-manifold satisfies the $\partial\bar{\partial}$-Lemma
we need a blow-up formula for Bott-Chern cohomology.
Using a sheaf-theoretic approach, we prove the following result.

\begin{thm}\label{Thm1}
Let $X$ be a compact complex manifold with $\mathrm{dim}_{\mathbb{C}}\,X=n$.
Assume that $Z\subset X$ is a closed complex submanifold such that $\mathrm{codim}_{\mathbb{C}}\,Z=r\geq2$.
Then we have the following isomorphisms:
\begin{itemize}
  \item [(i)]
  $
  H_{BC}^{p,q}(\tilde{X})\cong
  H_{BC}^{p,q}(X)\bigoplus\biggl(H_{BC}^{p,q}(E)/(\pi|_{E})^{\ast}H_{BC}^{p,q}(Z)\biggr),
  $
  for $1\leq p,q \leq n$;
  \item [(ii)]
  $H^{p,q}_{BC}(\tilde{X})\cong H^{p,q}_{BC}(X)$, for $p=0$ or $q=0$.
\end{itemize}
Here $\pi:\tilde{X}\rightarrow X$ is the blow-up of $X$ along $Z$ and $E=\pi^{-1}(Z)$ is the exceptional divisor.
\end{thm}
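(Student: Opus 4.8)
The plan is to move the analytic definition of Bott--Chern cohomology into a sheaf-theoretic setting in which the blow-up can be exploited, and then to localize the entire computation near $Z$. First I would use the presentation of Bott--Chern cohomology as hypercohomology: on any complex manifold there is a complex of sheaves $\mathscr{B}^{\bullet}_{p,q}$, built from the truncated holomorphic and anti-holomorphic de Rham complexes in the sense of Schweitzer, with $H^{p,q}_{BC}(X)\cong\mathbb{H}^{p+q}(X,\mathscr{B}^{\bullet}_{p,q})$, and likewise on $\tilde{X}$, $Z$ and $E$. Since $\mathscr{B}^{\bullet}_{p,q}$ is functorial for holomorphic maps, the blow-up $\pi$ and the inclusions $j\colon E\hookrightarrow\tilde{X}$, $\iota\colon Z\hookrightarrow X$ and the restriction $\rho:=\pi|_{E}\colon E\to Z$ (which satisfy $\pi\circ j=\iota\circ\rho$) all operate on these complexes, so the problem becomes that of understanding the derived pushforward $R\pi_{*}\mathscr{B}^{\bullet}_{p,q}$.

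Over $X\setminus Z$ the map $\pi$ is biholomorphic and contributes nothing, so the entire discrepancy is concentrated along $Z$, where $\pi$ is the projection of the projectivized normal bundle $E=\mathbb{P}(N_{Z/X})\to Z$. The key input I would establish is a projective-bundle (Leray--Hirsch) formula for Bott--Chern cohomology: with $\xi:=c_{1}^{BC}(\mathcal{O}_{E}(1))\in H^{1,1}_{BC}(E)$ the Bott--Chern class of the relative hyperplane bundle,
\[
H^{p,q}_{BC}(E)\;\cong\;\bigoplus_{i=0}^{r-1}\rho^{*}H^{p-i,q-i}_{BC}(Z)\cdot\xi^{i}.
\]
In particular $\rho^{*}\colon H^{p,q}_{BC}(Z)\to H^{p,q}_{BC}(E)$ is a split injection with cokernel $\bigoplus_{i=1}^{r-1}\rho^{*}H^{p-i,q-i}_{BC}(Z)\cdot\xi^{i}$, which will account for the extra summand in (i).

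With the bundle formula in hand I would assemble the isomorphism of (i) explicitly. Using the Gysin map $j_{*}\colon H^{a,b}_{BC}(E)\to H^{a+1,b+1}_{BC}(\tilde{X})$ of the divisor $E$, define
\[
\Phi\bigl(\alpha,(\eta_{i})_{i=1}^{r-1}\bigr)=\pi^{*}\alpha+\sum_{i=1}^{r-1}j_{*}\bigl(\rho^{*}\eta_{i}\cdot\xi^{i-1}\bigr),
\]
where $\alpha\in H^{p,q}_{BC}(X)$ and $\eta_{i}\in H^{p-i,q-i}_{BC}(Z)$; the bidegrees match because $E$ has codimension one. That $\Phi$ is bijective is checked by combining the projection formula $j_{*}(x\cdot j^{*}y)=j_{*}x\cdot y$, the self-intersection relation $j^{*}c_{1}^{BC}(\mathcal{O}_{\tilde{X}}(E))=-\xi$, and the bundle formula, which together compute $j^{*}\Phi$ and its behaviour off $Z$. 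Conceptually this realizes the Mayer--Vietoris exact sequence
\[
0\to H^{p,q}_{BC}(X)\xrightarrow{(\pi^{*},\iota^{*})}H^{p,q}_{BC}(\tilde{X})\oplus H^{p,q}_{BC}(Z)\xrightarrow{\,j^{*}-\rho^{*}\,}H^{p,q}_{BC}(E)\to 0,
\]
whose left-exactness rests on the injectivity of $\pi^{*}$; the latter I would prove by pushing forward currents, obtaining a left inverse $\pi_{*}$ with $\pi_{*}\pi^{*}=\mathrm{id}$ that commutes with $\partial$ and $\bar{\partial}$. As all the groups are $\mathbb{C}$-vector spaces, such a sequence splits, and combined with $j^{*}\pi^{*}=\rho^{*}\iota^{*}$ it yields exactly $H^{p,q}_{BC}(\tilde{X})\cong H^{p,q}_{BC}(X)\oplus\bigl(H^{p,q}_{BC}(E)/\rho^{*}H^{p,q}_{BC}(Z)\bigr)$.

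Part (ii) is then the degenerate case of the same analysis: since $\xi$ has bidegree $(1,1)$, every term with $i\ge 1$ in the bundle formula vanishes once $p=0$ or $q=0$, so $H^{p,q}_{BC}(E)=\rho^{*}H^{p,q}_{BC}(Z)$, the cokernel is zero, and $\pi^{*}$ becomes an isomorphism. I expect the main obstacle to be the two steps that are special to Bott--Chern cohomology: proving the projective-bundle formula and controlling $R\pi_{*}\mathscr{B}^{\bullet}_{p,q}$ near $Z$. Because Bott--Chern cohomology is the hypercohomology of a complex intertwining $\partial$ and $\bar{\partial}$, and lacks a naive Künneth or cup-product formalism, the Leray--Hirsch argument and the verification that the hypercohomology long exact sequence degenerates in bidegree $(p,q)$ into the short exact sequence above both require careful bookkeeping, with the injectivity of $\pi^{*}$ and $\rho^{*}$ playing the decisive role.
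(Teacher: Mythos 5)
Your argument contains a genuine gap: the projective-bundle (Leray--Hirsch) formula $H^{p,q}_{BC}(E)\cong\bigoplus_{i=0}^{r-1}\rho^{*}H^{p-i,q-i}_{BC}(Z)\cdot\xi^{i}$, on which everything else in your outline rests --- the split injectivity of $\rho^{*}$, the surjectivity of $j^{*}-\rho^{*}$, the bijectivity of your map $\Phi$, and the whole of part (ii) --- is asserted but never proved. You correctly identify it as ``the main obstacle,'' but no argument for it is supplied, and none of the standard proofs of Leray--Hirsch applies: Bott--Chern cohomology carries no naive K\"unneth formula and no multiplicative spectral sequence in which the classes $1,\xi,\dots,\xi^{r-1}$ could be fed. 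At the time of this paper the formula was in fact an open problem --- it is precisely what the concluding remarks flag as the missing ingredient for the explicit formula \eqref{blow-up-formula}, later supplied by Stelzig via a structure theory of double complexes \cite{Ste18}. Note also that the formula is strictly \emph{stronger} than what Theorem \ref{Thm1} (i) asserts: the theorem only splits off the cokernel of $\rho^{*}$ without computing it, so your reduction runs in the wrong direction. A second, smaller gap: exactness of your short sequence at the middle term $H^{p,q}_{BC}(\tilde{X})\oplus H^{p,q}_{BC}(Z)$ does not follow from injectivity of $\pi^{*}$ (that gives only left-exactness); in your sketch the middle exactness and the surjectivity are again consequences of the unproven bundle formula, and the Gysin formalism ($j_{*}$, projection formula, $j^{*}c_{1}^{BC}(\mathcal{O}_{\tilde{X}}(E))=-\xi$ at the Bott--Chern level) would itself need justification via the identification of smooth-form and current Bott--Chern cohomology.

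The paper's proof avoids all of this. It compares the hypercohomology long exact sequences attached to $0\to\mathscr{K}^{\bullet}_{X}\to\mathscr{B}^{\bullet}_{X}\to\iota_{\ast}\mathscr{B}^{\bullet}_{Z}\to0$ and its analogue on $\tilde{X}$, and the crux is Lemma \ref{kersheaf-Hcoh-iso}: $\pi^{\ast}\colon\mathbb{H}^{k}(X,\mathscr{K}^{\bullet}_{X})\to\mathbb{H}^{k}(\tilde{X},\mathscr{K}^{\bullet}_{\tilde{X}})$ is an isomorphism. This is proved by splitting $\mathscr{K}^{\bullet}$ into the constant-sheaf part, handled via $H^{\bullet}(X,\mathcal{K}_{U})\cong H^{\bullet}_{dR,c}(U)$ and the biholomorphism $\tilde{U}\cong U$, and the relative Dolbeault part $\mathscr{D}^{\bullet}$, handled via the isomorphism $\pi^{\ast}\colon\mathscr{D}^{s}_{X}\stackrel{\simeq}{\to}\pi_{\ast}\mathscr{D}^{s}_{\tilde{X}}$ together with the vanishing $R^{r}\pi_{\ast}\mathscr{D}^{s}_{\tilde{X}}=0$ for $r\geq1$ (Lemma \ref{higher-di-img}) and a spectral-sequence comparison; then injectivity of $\pi^{\ast}$ on $H^{p,q}_{BC}$ (properness of $\pi$), the weak five lemma, and the elementary splitting Proposition \ref{cla} yield (i) with no bundle formula and no Gysin maps. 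Part (ii) is proved independently and elementarily: $H^{p,0}_{BC}$ consists of $\partial$-closed holomorphic $p$-forms, and Hartogs extension across $Z$ (using $\mathrm{codim}_{\mathbb{C}}Z\geq2$) gives $H^{p,0}_{BC}(X)\cong H^{p,0}_{BC}(U)\cong H^{p,0}_{BC}(\tilde{U})\hookleftarrow H^{p,0}_{BC}(\tilde{X})$, whereas your derivation of (ii) is again contingent on the unproven formula. To salvage your outline you would have to either prove the Bott--Chern bundle formula (as in \cite{Ste18}) or replace it by a relative comparison of the kind just described.
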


Note that on compact complex surfaces the $\partial\bar{\partial}$-Lemma, the balanced condition, and the K\"{a}hler condition are equivalent;
however, if the complex dimension is larger than 2 those conditions are not equivalent sharply.
In particular, there exist examples of compact balanced manifolds on which the $\partial\bar{\partial}$-Lemma does not hold.
Besides, all $\partial\bar{\partial}$-manifolds known so far are balanced manifolds,
and hence this leads to a conjecture that every $\partial\bar{\partial}$-manifold carries a balanced metric (cf. \cite{Pop15, RZ16}).
In 2012, Fu-Li-Yau \cite[Theorem 1.2]{FLY12} proved that each Clemens manifold has a balanced metric;
moreover, recently Friedman \cite[Theorem 3.10]{Fri17} showed that for a general Clemens manifold the $\partial\bar{\partial}$-Lemma holds.
In their paper \cite{AT13}, Angella-Tomassini introduced a series of numerical invariants called \emph{non-K\"{a}hlerness degrees} for compact complex manifolds.
As an application of Theorem \ref{Thm1}, we consider the bimeromorphic invariance of non-K\"{a}hlerness degrees for compact complex threefolds
and we give a definite answer to Problem \ref{problem} in this case.

\begin{thm}\label{Thm2}
Let $f:\tilde{X}\dashrightarrow X$ be a bimeromorphic map of compact complex threefolds.
Then for any $k\in\{0,1,2,3,4,5,6\}$ the $k$-th non-K\"{a}hlerness degree of $\tilde{X}$ is equal to the $k$-th non-K\"{a}hlerness degree of $X$.
Moreover, $\tilde{X}$ is a $\partial\bar{\partial}$-manifold if and only if $X$ is a $\partial\bar{\partial}$-manifold.
\end{thm}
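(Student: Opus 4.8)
The plan is to reduce the assertion to the effect of a single blow-up and then feed Theorem~\ref{Thm1} into the Angella--Tomassini characterization of the $\partial\bar{\partial}$-Lemma. By the Weak Factorization Theorem, $f$ decomposes into finitely many blow-ups and blow-downs along smooth centers, so it is enough to prove that each non-K\"ahlerness degree is invariant under one blow-up $\pi\colon\tilde{X}\to X$ of a compact complex threefold $X$ along a smooth center $Z$ with $\mathrm{codim}_{\mathbb{C}}Z=r\ge 2$. Since $\dim_{\mathbb{C}}X=3$, the center $Z$ is either a point ($r=3$) or a smooth compact curve ($r=2$); in either case $Z$ is projective, hence K\"ahler, and so is a $\partial\bar{\partial}$-manifold. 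The same is true of the exceptional divisor $E=\mathbb{P}(N_{Z/X})$, which is a projective bundle over a point or a curve and therefore a K\"ahler surface (or $\mathbb{P}^2$).

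Write $\Delta^k(W)=\sum_{p+q=k}\bigl(h^{p,q}_{BC}(W)+h^{p,q}_A(W)\bigr)-2b_k(W)$ for the $k$-th non-K\"ahlerness degree, and set $H^{(k)}_{BC}(W)=\sum_{p+q=k}h^{p,q}_{BC}(W)$ and $H^{(k)}_A(W)=\sum_{p+q=k}h^{p,q}_A(W)$. I first assemble three total-degree blow-up formulas. Summing the bidegree isomorphisms of Theorem~\ref{Thm1} over $p+q=k$, using the injectivity of $(\pi|_E)^{\ast}$ and the fact that, because $E$ and $Z$ are K\"ahler, the Dolbeault projective bundle formula yields $H^{(k)}_{BC}(E)=\sum_{i=0}^{r-1}H^{(k-2i)}_{BC}(Z)$, I obtain
\[
H^{(k)}_{BC}(\tilde{X})=H^{(k)}_{BC}(X)+\sum_{i=1}^{r-1}H^{(k-2i)}_{BC}(Z).
\]
Applying the Bott--Chern--Aeppli duality $h^{p,q}_A(W)=h^{\dim_{\mathbb{C}}W-p,\,\dim_{\mathbb{C}}W-q}_{BC}(W)$ on $\tilde{X}$, $X$ and $Z$ and re-indexing the resulting sum turns this into the formally identical relation $H^{(k)}_A(\tilde{X})=H^{(k)}_A(X)+\sum_{i=1}^{r-1}H^{(k-2i)}_A(Z)$, while the classical blow-up formula for singular cohomology gives $b_k(\tilde{X})=b_k(X)+\sum_{i=1}^{r-1}b_{k-2i}(Z)$.

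Combining the three formulas and collecting the correction terms by the index $i$ yields the telescoping identity
\[
\Delta^k(\tilde{X})=\Delta^k(X)+\sum_{i=1}^{r-1}\Delta^{k-2i}(Z).
\]
Since $Z$ is a $\partial\bar{\partial}$-manifold, its Bott--Chern, Aeppli and Betti numbers all agree with its Dolbeault numbers, whence $\Delta^{j}(Z)=0$ for every $j$; therefore $\Delta^k(\tilde{X})=\Delta^k(X)$ for all $k$, which is the first assertion. For the second, I invoke the Angella--Tomassini theorem that a compact complex manifold is a $\partial\bar{\partial}$-manifold precisely when all of its non-K\"ahlerness degrees vanish: then $\tilde{X}$ is a $\partial\bar{\partial}$-manifold iff $\Delta^k(\tilde{X})=0$ for all $k$, iff $\Delta^k(X)=0$ for all $k$, iff $X$ is a $\partial\bar{\partial}$-manifold.

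The main obstacle is the bookkeeping that turns the single-bidegree isomorphisms of Theorem~\ref{Thm1} into the clean total-degree formulas, and especially the derivation of the Aeppli analogue: one must verify that Bott--Chern--Aeppli duality, applied on the three manifolds $\tilde{X}$, $E$, $Z$ of differing complex dimensions $n$, $n-1$, $n-r$, re-indexes the Aeppli correction term into exactly the same shape as the Bott--Chern one. The threefold hypothesis enters decisively here, since it forces $Z$ to be a point or a curve and hence a $\partial\bar{\partial}$-manifold with $\Delta^{\bullet}(Z)=0$; in higher dimensions the residual sum $\sum_{i}\Delta^{k-2i}(Z)$ need not vanish, so this line of argument would establish invariance only under the extra assumption that the center is itself a $\partial\bar{\partial}$-manifold.
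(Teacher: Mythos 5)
Your proposal is correct, and its skeleton is the same as the paper's: reduce via the Weak Factorization Theorem (Theorem \ref{WFT}) to a single blow-up of a threefold at a point or along a smooth curve, feed in Theorem \ref{Thm1}, identify the cokernel of $(\pi|_{E})^{\ast}$ using that the center and the exceptional divisor are K\"ahler, compare with the de Rham blow-up formula, and conclude with the Angella--Tomassini characterization (Theorem \ref{AT13}). Where you genuinely differ is in the bookkeeping. The paper argues bidegree-by-bidegree and case-by-case: Corollary \ref{cor3.4} gives the explicit pointed blow-up formula, Lemma \ref{lem4.2} gives $H^{p,q}_{BC}(\tilde{X})\cong H^{p,q}_{BC}(X)\oplus H^{p-1,q-1}_{\bar{\partial}}(C)$ for a curve, and then $\Delta^{k}$ is checked separately for $k\in\{0,1,3,5,6\}$ and $k\in\{2,4\}$, with Poincar\'e duality on $C$ supplying the cancellation $b_{4-k}(C)=b_{k-2}(C)$. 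You instead sum over total degree and prove the uniform telescoping identity $\Delta^{k}(\tilde{X})=\Delta^{k}(X)+\sum_{i=1}^{r-1}\Delta^{k-2i}(Z)$, with the threefold hypothesis entering only through $\Delta^{\bullet}(Z)=0$ for $Z$ a point or a curve; this is cleaner, and it makes transparent exactly the higher-dimensional obstruction the paper records in Section \ref{remark} (the argument extends whenever the center is a $\partial\bar{\partial}$-manifold and the projective bundle formula is available). Your re-indexing of the Aeppli correction term via the duality $h^{p,q}_{A}=h^{m-p,m-q}_{BC}$ on manifolds of differing dimensions checks out (it amounts to $H^{(k)}_{A}=H^{(2m-k)}_{BC}$ on each factor, and $2(n-r)-(k-2i)=2n-k-2(r-i)$ realigns the sums), and is in substance the same device as the paper's definition $\Delta^{k}=h^{k}_{BC}+h^{2n-k}_{BC}-2b_{k}$, which sidesteps a separate Aeppli formula altogether. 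Two points you rely on implicitly and should state: the injectivity of $(\pi|_{E})^{\ast}$ (obtained in the paper's proof of Theorem \ref{Thm1} via the Weak Five Lemma) is needed to convert the cokernel into the count $h^{p,q}_{BC}(E)-h^{p,q}_{BC}(Z)$; and your appeal to the Dolbeault projective bundle formula over a curve should, in this paper's framework, be justified as in Lemma \ref{lem4.2} by Leray--Hirsch for de Rham cohomology plus the Hodge decomposition on the K\"ahler surface $E=\mathbb{P}(N_{C/X})$, since the general Bott--Chern projective bundle formula is only cited in the August 2018 update.
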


It is well-known that $(p,0)$- and $(0,q)$-Hodge numbers are bimeromorphic invariants of compact complex manifolds (see \cite[Corollary 1.4]{RYY17} for a uniform proof).
Due to the symmetric property of Bott-Chern cohomology the $(0,q)$-Bott-Chern number equals to the $(q,0)$-Bott-Chern number.
Analogously, on the $(p,0)$-Bott-Chern number for a compact complex manifold $X$,
a natural problem is:

\begin{prob}
Does the $(p,0)$-Bott-Chern number admit the bimeromorphic invariance?
\end{prob}

As an application of the second part of Theorem \ref{Thm1}, we show that the $(p,0)$-Bott-Chern numbers of a compact complex manifold are bimeromorphic invariants (Corollary \ref{p-0-inv}).

An outline of this paper is organized as follows.
We devote Section \ref{Prelim} to the preliminaries of Bott-Chern cohomology and the cohomological characterization of the $\partial\bar{\partial}$-Lemma
on a compact complex manifold.
In Section \ref{blow-up-formula-BC}, we prove a blow-up formula for Bott-Chern cohomology.
In Section \ref{invar-ddbar}, we show that the non-K\"{a}hlerness degrees of threefolds are bimeromorphic invariants.
Finally, in Section \ref{remark} we give a brief remark on the bimeromorphic invariance of
$\partial\bar{\partial}$-Lemma on compact complex manifolds in general.

Recently, we were informed that, using different techniques, D. Angella, T. Suwa, N. Tardini and A. Tomassini also proved the stability of the $\partial\bar{\partial}$-Lemma under blow-ups with centers satisfying the $\partial\bar{\partial}$-Lemma and having holomorphically contractible neighbourhoods \cite[Theorem 2.1]{ASTT17}.
The approach in \cite{ASTT17} is a \v{C}ech cohomology theory for Dolbeault cohomology which gives rise to a more topological method from a complimentary point of view.

\subsection*{Acknowledgement}
We would like to express our gratitude to Prof. Guosong Zhao and Prof. Xiaojun Chen for their constant encouragements and supports,
and sincerely thank the Departments of Mathematics, Universit\`{a} degli Studi di Milano and Department of Mathematics, Cornell University for the hospitalities during our respective visits.
Specially, we would like to thank Sheng Rao for many useful discussions and comments which the present version is benefitted from.
Last, but not least, we sincerely thank Prof. D. Angella, Dr. L. Meng  and Dr. J. Stelzig for sending us their paper \cite{ASTT17}, \cite{Men18} and \cite{Ste18}, respectively.
This work is partially supported by the NSFC (Grant No. 11571242, No. 11701414 and No. 11701051) and the China Scholarship Council.

\section{Preliminaries}\label{Prelim}
\subsection{Cohomologies of complex manifolds}
Assume that $(X,J)$ is a compact complex manifold of complex dimension $n$.
Let $\mathcal{A}^{k}(X;\mathbb{C})$ denote the space of $\mathbb{C}$-valued differential forms of degree $k$ on $X$.
The complex structure $J$ determines a decomposition of the exterior differential $d=\partial+\bar{\partial}$;
moreover, it induces a natural decomposition of the complexified cotangent bundle
\[
T^{\ast}_{\mathbb{C}}X=T^{*\prime}X\oplus T^{*\prime\prime}X.
\]
The decomposition above gives a decomposition on the space of differential forms
$$
\mathcal{A}^{k}(X;\mathbb{C})=\bigoplus_{p+q=k}\mathcal{A}^{p,q}(X;\mathbb{C}),
$$
where $\mathcal{A}^{p,q}(X;\mathbb{C})$ is the space of smooth $(p,q)$-forms on $X$.
Furthermore, we get a natural bi-differential $\mathbb{Z}_{2}$-graded algebra, namely, the double complex of $X$ denoted by
$
(\mathcal{A}^{\bullet,\bullet}(X;\mathbb{C});\partial,\bar{\partial}).
$
In particular, there are several important cohomologies $X$ associated to
$
(\mathcal{A}^{\bullet,\bullet}(X;\mathbb{C});\partial,\bar{\partial})
$
as follows (cf. \cite{Dol53, BC65, Aep65}).
\begin{itemize}
  \item The \emph{de Rham cohomology}
  $$
  H^{\bullet}_{dR}(X;\mathbb{C}):=\frac{\ker\,d}{\mathrm{im}\,d};
  $$
  \item The \emph{$\partial$-cohomology} and \emph{Dolbeault cohomology}
  $$
  H^{\bullet,\bullet}_{\partial}(X):=\frac{\ker\,\partial}{\mathrm{im}\,\partial},
 \quad
  H^{\bullet,\bullet}_{\bar{\partial}}(X):=\frac{\ker\bar{\partial}}{\mathrm{im}\,\bar{\partial}};
  $$

  \item The \emph{Bott-Chern cohomology} and \emph{Aeppli cohomology}
  $$
  H^{\bullet,\bullet}_{BC}(X):=\frac{\ker\partial\cap\ker\bar{\partial}}{\mathrm{im}\,\partial\bar{\partial}},
  \quad
  H^{\bullet,\bullet}_{A}(X):=\frac{\ker\partial\bar{\partial}}{\mathrm{im}\,\partial+\mathrm{im}\,\bar{\partial}}.
  $$
\end{itemize}
From definition, the Bott-Chern and Aeppi cohomology admit the symmetric property by complex conjugation.

It is noteworthy that all the above cohomologies can be thought of as \emph{bi-graded algebras over $\mathbb{C}$} with respect to the wedge product.
In particular, the identity map induces the following natural maps of bi-graded $\mathbb{C}$-vector spaces
\begin{equation}\label{complex-cohom-diagram}
\xymatrix@C=0.5cm{
 & H^{\bullet,\bullet}_{BC}(X) \ar[d]\ar[ld]\ar[rd] & \\
 H^{\bullet,\bullet}_{\partial}(X) \ar[rd] & H^{\bullet}_{dR}(X;\mathbb{C}) \ar[d] & H^{\bullet,\bullet}_{\bar{\partial}}(X) \ar[ld] \\
 & H^{\bullet,\bullet}_{A}(X) &
}
\end{equation}
In general, the maps in (\ref{complex-cohom-diagram}) are neither surjective nor injective.
If the natural map
$$
H^{\bullet,\bullet}_{BC}(X)\rightarrow H^{\bullet,\bullet}_{dR}(X;\mathbb{C})
$$
induced by the identity is \emph{injective},
then we say that $X$ satisfies the \emph{$\partial\bar{\partial}$-Lemma}.
In addition, we have the following proposition (cf. \cite[Proposition 5.17]{DGMS75}).
\begin{prop}
Let $X$ be a compact complex manifold.
Then the following conditions are equivalent.
\begin{itemize}
  \item [(i)] The map $H^{\bullet,\bullet}_{BC}(X)\rightarrow H^{\bullet,\bullet}_{dR}(X;\mathbb{C})$ is injective.
  \item [(ii)] All the maps in \eqref{complex-cohom-diagram} are isomorphisms.
  \item [(iii)]The following equation holds on the space of $\mathbb{C}$-valued differential forms
  $$
  \ker\,\partial\cap\ker\,\bar{\partial}\cap\emph{im}\,d=\emph{im}\,\partial\bar{\partial}.
  $$
\end{itemize}
\end{prop}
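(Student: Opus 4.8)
The plan is to close the loop $(ii)\Rightarrow(i)\Leftrightarrow(iii)\Rightarrow(ii)$, treating $(i)\Leftrightarrow(iii)$ as a formal unwinding and $(ii)\Rightarrow(i)$ as immediate, and concentrating the real work in $(iii)\Rightarrow(ii)$. The implication $(ii)\Rightarrow(i)$ is trivial: if every arrow in \eqref{complex-cohom-diagram} is an isomorphism, then in particular $H^{\bullet,\bullet}_{BC}(X)\to H^{\bullet}_{dR}(X;\mathbb{C})$ is injective. For $(i)\Leftrightarrow(iii)$ I would write the map $H^{p,q}_{BC}(X)\to H^{p+q}_{dR}(X;\mathbb{C})$ explicitly as $[\alpha]_{BC}\mapsto[\alpha]_{dR}$. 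This is well defined because a representative $\alpha\in\ker\partial\cap\ker\bar{\partial}$ is automatically $d$-closed, and because $\mathrm{im}\,\partial\bar{\partial}\subseteq\mathrm{im}\,d$ (indeed $\partial\bar{\partial}\beta=d(\bar{\partial}\beta)$). Its kernel is exactly $\bigl(\ker\partial\cap\ker\bar{\partial}\cap\mathrm{im}\,d\bigr)/\mathrm{im}\,\partial\bar{\partial}$. Since the inclusion $\mathrm{im}\,\partial\bar{\partial}\subseteq\ker\partial\cap\ker\bar{\partial}\cap\mathrm{im}\,d$ always holds, injectivity in all bidegrees is equivalent to the reverse inclusion, which is precisely the identity in $(iii)$.

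The substance is $(iii)\Rightarrow(ii)$, where the plan is the classical two-step strategy. First I would upgrade $(iii)$ to the full symmetric family of ``one-sided'' identities, the two basic ones being
$$
\mathrm{im}\,\partial\cap\ker\bar{\partial}=\mathrm{im}\,\partial\bar{\partial}=\mathrm{im}\,\bar{\partial}\cap\ker\partial,
$$
together with their complex conjugates and the dual statements on the Aeppli side. Collectively these say that a pure-type form which is both $\partial$- and $\bar{\partial}$-closed is $d$-exact if and only if $\partial$-exact if and only if $\bar{\partial}$-exact if and only if $\partial\bar{\partial}$-exact. These are \emph{not} formal consequences of $(iii)$ by a one-line manipulation: a $\partial$-exact form need not be $d$-exact, so one must run the genuine diagram chases of \cite{DGMS75}, exploiting conjugation symmetry and the interplay of $\partial,\bar{\partial},d$, to move potentials between the two differentials. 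Second, granting this family of identities, I would verify the six arrows of \eqref{complex-cohom-diagram} one at a time. For the three maps out of $H^{\bullet,\bullet}_{BC}(X)$, injectivity is a direct restatement of the one-sided identities, while surjectivity amounts to replacing a given $\bar{\partial}$-closed (resp. $\partial$-closed, resp. $d$-closed) representative by one that is simultaneously $\partial$- and $\bar{\partial}$-closed, the correction being solved for via an auxiliary $\partial\bar{\partial}$-equation supplied by the identities. The three maps into $H^{\bullet,\bullet}_{A}(X)$ are then disposed of by the conjugate and dual versions of the same arguments.

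The step I expect to be the main obstacle is exactly this surjectivity half — equivalently, the derivation of the one-sided identities themselves — since producing an \emph{improved} representative is the only place where something beyond bookkeeping is required, and one must track bidegrees so that all correction terms land in the intended spaces. As a cleaner structural alternative to keep in reserve, I would instead invoke the decomposition of the bounded double complex $(\mathcal{A}^{\bullet,\bullet}(X;\mathbb{C});\partial,\bar{\partial})$ into indecomposable ``squares'' and ``zigzags'': condition $(iii)$ forces every zigzag to collapse to length one, and over such a double complex the de Rham, $\partial$-, $\bar{\partial}$-, Bott--Chern and Aeppli cohomologies are all computed by the same squares, so that every comparison map in \eqref{complex-cohom-diagram} becomes an isomorphism simultaneously.
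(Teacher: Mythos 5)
Your outline is correct and is precisely the classical argument: the loop $(ii)\Rightarrow(i)\Leftrightarrow(iii)\Rightarrow(ii)$, with $(iii)$ first upgraded by conjugation-symmetric diagram chases to the one-sided identities $\mathrm{im}\,\partial\cap\ker\bar{\partial}=\mathrm{im}\,\partial\bar{\partial}=\mathrm{im}\,\bar{\partial}\cap\ker\partial$ and these then used to improve representatives arrow by arrow in \eqref{complex-cohom-diagram}, is exactly the proof of \cite[Lemma 5.15 and Proposition 5.17]{DGMS75}, which the paper cites without reproving. Your reserve alternative via the squares-and-zigzags decomposition of the bounded double complex is also sound and corresponds to the structure theory of Stelzig \cite{Ste} mentioned in the paper's August 2018 update.
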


If $X$ is a K\"{a}hler manifold or more general a $\partial\bar{\partial}$-manifold
then the Bott-Chern/Aeppli cohomology coincide with the Dolbeault cohomology via the maps in (\ref{complex-cohom-diagram}).
In non-K\"{a}hler complex geometry, the Bott-Chern/Aeppli cohomology play important roles.
According to the Hodge theory on compact Hermitian manifold we know that the de Rham and Dolbeault cohomology of a compact complex manifold are finite dimensional.
Analogously, Schweitzer \cite{Sch07} established the Hodge theory for Bott-Chern and Aeppli cohomology.
Given a Hermitian metric $h$ on $X$ then we can construct two $4$-th order elliptic self-adjoint differential operators
$$
\tilde{\Delta}_{BC}:=(\partial\bar{\partial})(\partial\bar{\partial})^{\ast}+(\partial\bar{\partial})^{\ast}(\partial\bar{\partial})
+(\bar{\partial}^{\ast}\partial)(\bar{\partial}^{\ast}\partial)^{\ast}
+(\bar{\partial}^{\ast}\partial)^{\ast}(\bar{\partial}^{\ast}\partial)
+\bar{\partial}^{\ast}\bar{\partial}+\partial^{\ast}\partial
$$
and
$$
\tilde{\Delta}_{A}:=\partial\partial^{\ast}+\bar{\partial}\,\bar{\partial}^{\ast}
+(\partial\bar{\partial})^{\ast}(\partial\bar{\partial})
+(\partial\bar{\partial})(\partial\bar{\partial})^{\ast}
+(\bar{\partial}\partial^{\ast})^{\ast}(\bar{\partial}\partial^{\ast})+(\bar{\partial}\partial^{\ast})
(\bar{\partial}\partial^{\ast})^{\ast}.
$$
Moreover, we have
$$
H^{\bullet,\bullet}_{BC}(X)\cong\ker\,\tilde{\Delta}_{BC}\,\,\,\mathrm{and}\,\,\, H^{\bullet,\bullet}_{A}\cong\ker\,\tilde{\Delta}_{A}.
$$
It follows that the Bott-Chern and Aeppli cohomology of a compact complex manifold are finite dimensional.
In addition, the Hodge star operator with respect to the Hermitian metric $h$ induces a duality
\begin{equation}\label{equ2.2}
\ast:H_{BC}^{p,q}(X) \stackrel{\cong}
 \longrightarrow H^{n-p,n-q}_{A}(X)
\end{equation}
for every $p,q\in\mathbb{N}$.

Consider the relationship between the de Rham and Dolbeault cohomology of $X$.
There exists a spectral sequence $\{E_{r},d_{r}\}$ of the double complex
$
(\mathcal{A}^{\bullet,\bullet}(X;\mathbb{C});\partial,\bar{\partial}),
$
called the \emph{Hodge-Fr\"{o}licher spectral sequence}
starting from the Dolbeault cohomology
$$
E^{p,q}_{1}=H^{p,q}_{\bar{\partial}}(X)
$$
and converging to the graded module associated to a filtration of the de Rham cohomology
$$
E^{p,q}_{r}\Longrightarrow \mathrm{Gr}\,H^{p+q}_{dR}(X;\mathbb{C})
\cong H^{p+q}_{dR}(X;\mathbb{C}).
$$
Moreover, this gives rise to the famous \emph{Fr\"{o}licher-inequality} \cite{Fro55}:
$$
\sum_{p+q=k}\mathrm{dim}_{\mathbb{C}}\,H^{p,q}_{\bar{\partial}}(X)\geq \mathrm{dim}_{\mathbb{C}}\,H^{k}_{dR}(X;\mathbb{C}),
$$
for any  $0\leq p,q\leq n$.
As an analogy, Angella-Tomassini \cite{AT13} proved a Fr\"{o}licher-type inequality
which links Bott-Chern and Aeppli cohomology to de Rham cohomology
and provides a new characteristic theorem of the $\partial\bar{\partial}$-Lemma.

\begin{thm}[{\cite[Theorem A and Theorem B]{AT13}}]\label{AT13}
Let $X$ be a compact complex manifold.
Then, for every $k\in\mathbb{N}$, the following inequality holds
\begin{equation}\label{fro-ine}
\sum_{p+q=k}\bigl(\mathrm{dim}_{\mathbb{C}}\,H_{BC}^{p,q}(X)+\mathrm{dim}_{\mathbb{C}}\,H^{p,q}_{A}(X)\bigr)
\geq2\mathrm{dim}_{\mathbb{C}}\,H^{k}_{dR}(X;\mathbb{C}).
\end{equation}
Moreover, the equality in \eqref{fro-ine} holds for every $k\in\mathbb{N}$ if and only if $X$ satisfies the $\partial\bar{\partial}$-Lemma.
\end{thm}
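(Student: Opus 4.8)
The plan is to separate the statement into the inequality \eqref{fro-ine} for each fixed $k$ and the characterization of its equality case, treating both as finite-dimensional linear algebra on the diamond \eqref{complex-cohom-diagram}; this is legitimate because, by Schweitzer's Hodge theory quoted above, every cohomology group in sight is finite-dimensional. For each $k$ I would first set up the two identity-induced maps
$$\Phi_k:\bigoplus_{p+q=k}H^{p,q}_{BC}(X)\longrightarrow H^k_{dR}(X;\mathbb{C}),\qquad \Psi_k:H^k_{dR}(X;\mathbb{C})\longrightarrow\bigoplus_{p+q=k}H^{p,q}_{A}(X),$$
coming from the left and right branches of \eqref{complex-cohom-diagram}. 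A short computation with a $d$-closed representative $\alpha=\sum_{p+q=k}\alpha^{p,q}$ shows each component $\alpha^{p,q}$ is automatically $\partial\bar\partial$-closed, so $\Psi_k$ is well defined by $[\alpha]_{dR}\mapsto\sum_{p+q=k}[\alpha^{p,q}]_A$ and the composite $\Psi_k\circ\Phi_k$ is exactly the natural Bott-Chern-to-Aeppli map. Writing $a_k$, $b_k$, $c_k$ for the dimensions of the three spaces above, the target is $a_k+b_k\geq 2c_k$.

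For the inequality I would run a rank-counting argument on the three-term sequence $\Phi_k,\Psi_k$. The Hodge-star duality \eqref{equ2.2}, combined with Poincar\'e duality on de Rham cohomology, makes $\Psi_k$ adjoint to $\Phi_{2n-k}$, so $\operatorname{rank}\Psi_k=\operatorname{rank}\Phi_{2n-k}$, and rank-nullity then reduces $a_k+b_k-2c_k\geq0$ to a comparison of the kernels and cokernels of the maps $\Phi_\bullet$. Pure rank-nullity does not close this, however, since it only links degree $k$ to degree $2n-k$; the missing ingredient is a genuine exactness statement tying Bott-Chern and Aeppli cohomology to the intermediate $\partial$- and $\bar\partial$-cohomologies. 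I would supply it either through the Varouchas-type long exact sequences induced by the identity along \eqref{complex-cohom-diagram}, or, more conceptually, by decomposing the double complex $(\mathcal{A}^{\bullet,\bullet}(X;\mathbb{C});\partial,\bar\partial)$ into its indecomposable ``square'' and ``zigzag'' summands; each such summand contributes nonnegatively to the defect $\delta_k:=a_k+b_k-2c_k$, which yields \eqref{fro-ine}.

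For the equality case, the same decomposition pins down when $\delta_k=0$ for all $k$: a summand contributes zero in every total degree precisely when it is a square or a single ``dot,'' while any longer zigzag forces $\delta_k>0$ for some $k$. Having only squares and dots is equivalent to all maps in \eqref{complex-cohom-diagram} being isomorphisms, which by the preceding Proposition is exactly the $\partial\bar\partial$-Lemma. Conversely, if the $\partial\bar\partial$-Lemma holds then all those maps are isomorphisms, so $\sum_{p+q=k}\dim_{\mathbb{C}}H^{p,q}_{BC}(X)=\sum_{p+q=k}\dim_{\mathbb{C}}H^{p,q}_{A}(X)=c_k$ and equality in \eqref{fro-ine} follows immediately.

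The main obstacle is the structural input of the second paragraph: verifying that every indecomposable summand of the double complex (equivalently, every term of the auxiliary exact sequences) has nonnegative defect. This is the only non-formal step; the adjunction coming from \eqref{equ2.2}, the identification of the composite $\Psi_k\circ\Phi_k$, and the equality-case bookkeeping are all routine once it is available.
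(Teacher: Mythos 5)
Your proposal is essentially sound, but note first that the paper itself contains no proof of Theorem \ref{AT13}: it is quoted verbatim from Angella--Tomassini \cite{AT13}, so the comparison is with their original argument. That argument chains two inequalities, namely $\sum_{p+q=k}\bigl(h^{p,q}_{BC}(X)+h^{p,q}_{A}(X)\bigr)\geq\sum_{p+q=k}\bigl(h^{p,q}_{\partial}(X)+h^{p,q}_{\bar\partial}(X)\bigr)\geq 2b_{k}(X)$, the first coming from exact sequences \`a la Varouchas relating Bott--Chern and Aeppli cohomology to the $\partial$- and $\bar\partial$-cohomologies, the second being Fr\"olicher's inequality applied to both conjugate spectral sequences, with the equality case then traced back to the $\partial\bar\partial$-Lemma. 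Your route through the square/zigzag decomposition of the double complex is genuinely different --- it is the structural approach later systematized by Stelzig \cite{Ste18, Ste}, which the paper's own August 2018 update cites for the projective bundle formula --- and it does work: squares are acyclic for all cohomologies in sight, a single dot contributes $(1,1,1)$ to $(h_{BC},h_{A},b)$ in its total degree, and every longer zigzag contributes nonnegatively to $\delta_k$ in each degree and strictly positively in at least one (e.g.\ a length-two zigzag $A^{p,q}\stackrel{\partial}{\to}A^{p+1,q}$ gives $h_{A}=1$ in degree $p+q$, $h_{BC}=1$ in degree $p+q+1$, and contributes nothing to de Rham, so $\delta_{p+q}=\delta_{p+q+1}=1$), which delivers the inequality and the equality characterization in one stroke since ``only squares and dots'' is equivalent to all maps in \eqref{complex-cohom-diagram} being isomorphisms. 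What your approach buys is a single mechanism covering both Theorem A and Theorem B of \cite{AT13}; what it costs is the decomposition theorem itself together with the degreewise sign check for each zigzag type, and this is precisely the step you name but do not carry out --- it is the entire analytic content, so a complete write-up would have to either prove it or cite \cite{Ste}. Two smaller remarks: your middle paragraph (adjointness of $\Psi_k$ and $\Phi_{2n-k}$ via \eqref{equ2.2} and Poincar\'e duality, which does require reading \eqref{equ2.2} as induced by the wedge-and-integrate pairing in Schweitzer's formulation) is correct but, as you yourself concede, insufficient and ends up unused once the decomposition is invoked, so it could be cut; and since $\mathcal{A}^{\bullet,\bullet}(X;\mathbb{C})$ is bounded but infinite-dimensional, you should say explicitly that the decomposition applies in that generality and that finite-dimensionality of $H_{BC}$, $H_{A}$ and $H_{dR}$ forces only finitely many non-square summands per total degree, so the bookkeeping of $\delta_k$ is legitimate.
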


For any $(p,q)\in\mathbb{N}\times\mathbb{N}$, $k\in\mathbb{N}$ and $\sharp\in\{\partial,\bar{\partial}, BC, A\}$
we define
$$
h^{\bullet,\bullet}_{\sharp}(X):=\mathrm{dim}_{\mathbb{C}}\,H^{\bullet,\bullet}_{\sharp}(X)\quad
\mathrm{and}\quad
h^{k}_{\sharp}(X):=\sum_{p+q=k}\mathrm{dim}_{\mathbb{C}}\,H^{p,q}_{\sharp}(X).
$$
Especially, we denote the $k$-th Betti number of $X$ by
$$
b_{k}(X):=\mathrm{dim}_{\mathbb{C}}\,H^{k}_{dR}(X;\mathbb{C}).
$$
\begin{defn}[Non-K\"{a}hlerness degrees]
For a compact complex manifold $X$ of complex dimension $n$,
and for any $k\in\{0,1,\cdots,2n\}$ the $k$-th \emph{non-K\"{a}hlerness degree} is defined to be the non-negative integer
$$
\Delta^{k}(X):=h^{k}_{BC}(X)+h^{2n-k}_{BC}(X)-2b_{k}(X).
$$
In particular, we have $\Delta^{k}(X)=\Delta^{2n-k}(X)$ and $\Delta^{0}=\Delta^{2n}=0$; therefore, the only non-trivial non-K\"{a}hlerness degrees are $\Delta^{k}(X)$ for $k\in \{1, 2, \cdots 2n-1\}$.
\end{defn}

Equivalently, a quantitative characterization of the $\partial\bar{\partial}$-Lemma on compact complex manifolds is stated as follows.
\begin{thm}[{\cite[Theorem 3.1]{AT17}}]\label{thm2.4}
A compact complex manifold $X$ satisfies the $\partial\bar{\partial}$-Lemma if and only if, for any $k\in\mathbb{N}$, there holds
\begin{equation*}
\sum_{p+q=k}(h_{BC}^{p,q}(X)-h^{p,q}_{A}(X))=0.
\end{equation*}
\end{thm}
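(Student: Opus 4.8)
The plan is to reduce the statement to the equality case of the Frölicher-type inequality of Theorem \ref{AT13} and then to control that equality by decomposing the double complex of $X$ into indecomposable pieces. The easy implication I would dispatch directly: if $X$ satisfies the $\partial\bar\partial$-Lemma, then all the maps in \eqref{complex-cohom-diagram} are isomorphisms, so that $H^{p,q}_{BC}(X)\cong H^{p,q}_{\bar\partial}(X)\cong H^{p,q}_A(X)$ for every $(p,q)$; hence $h^{p,q}_{BC}(X)=h^{p,q}_A(X)$ and every summand of $\sum_{p+q=k}(h^{p,q}_{BC}(X)-h^{p,q}_A(X))$ already vanishes.

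For the converse I would argue structurally. Recall that the bounded double complex $(\mathcal A^{\bullet,\bullet}(X;\mathbb C);\partial,\bar\partial)$ splits over $\mathbb C$ into indecomposables, each of which is either a \emph{square} (acyclic, contributing nothing to any cohomology in \eqref{complex-cohom-diagram}) or a \emph{zigzag}, an alternating staircase of one-dimensional pieces joined by $\partial$- and $\bar\partial$-arrows. The computation I would carry out for a single zigzag $Z$ is routine but is the heart of the matter: since no composite $\partial\bar\partial$ survives inside a zigzag, $\dim H_{BC}(Z)$ equals the number of \emph{sinks} of $Z$ (pieces killed by both $\partial$ and $\bar\partial$) and $\dim H_A(Z)$ equals the number of \emph{sources} (pieces not in the image of $\partial$ or $\bar\partial$). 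Because every arrow raises the total degree by one and sources and sinks alternate along $Z$, all sources of $Z$ lie in a single total degree $d(Z)$ while all its sinks lie in total degree $d(Z)+1$; a \emph{dot} (a zigzag of length one) is simultaneously a source and a sink and contributes equally to $h^{p,q}_{BC}(X)$, $h^{p,q}_A(X)$ and to $b_{p+q}(X)$ at its own bidegree.

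With this bookkeeping in hand, let $S(k)$ and $T(k)$ denote the total number of sources, respectively sinks, of the zigzags of source degree $k$. The dots cancel in the difference, and the degree shift above yields
\begin{equation*}
\sum_{p+q=k}\bigl(h^{p,q}_{BC}(X)-h^{p,q}_A(X)\bigr)=T(k-1)-S(k),
\end{equation*}
so the hypothesis becomes exactly the balance condition $S(k)=T(k-1)$ for all $k$. (As a consistency check, the duality \eqref{equ2.2} gives $h^{p,q}_A(X)=h^{n-p,n-q}_{BC}(X)$, so the hypothesis may equivalently be read as the symmetry $h^{k}_{BC}(X)=h^{2n-k}_{BC}(X)$.) Now let $k_0$ be the smallest total degree carrying a zigzag of length $\geq 2$, should one exist. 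Any such zigzag has at least one source, so $S(k_0)>0$; but there is no zigzag of source degree $k_0-1$, so $T(k_0-1)=0$, contradicting the balance condition. Hence no zigzag of length $\geq2$ occurs, i.e. the double complex consists only of squares and dots. Then $h^{p,q}_{BC}(X)=h^{p,q}_A(X)$ is the number of dots at $(p,q)$ and $b_k(X)=\sum_{p+q=k}(\#\text{ dots})$, so $\sum_{p+q=k}(h^{p,q}_{BC}(X)+h^{p,q}_A(X))=2b_k(X)$ for every $k$; by Theorem \ref{AT13} this equality in \eqref{fro-ine} for all $k$ is equivalent to the $\partial\bar\partial$-Lemma.

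The main obstacle I expect is the zigzag computation of the middle paragraph: correctly locating where Bott-Chern versus Aeppli cohomology of an arbitrary indecomposable sits, and in particular verifying that the Bott-Chern part always lives exactly one total degree above the Aeppli part. It is precisely this one-step degree shift that converts the hypothesis into the telescoping identity $S(k)=T(k-1)$, after which the minimality argument at $k_0$ closes the proof; getting the arrow conventions and the source/sink accounting right (and invoking the decomposition theorem with the correct characterization "only squares and dots $\Leftrightarrow$ $\partial\bar\partial$-Lemma") is where the care is needed.
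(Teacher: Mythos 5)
Your argument is correct, but note that the paper contains no proof of Theorem \ref{thm2.4} at all: it is imported verbatim from \cite[Theorem 3.1]{AT17}, and Angella--Tardini's original proof is an algebraic dimension count in the spirit of the proof of Theorem \ref{AT13}, built on Varouchas' exact sequences relating Bott-Chern, Dolbeault and Aeppli cohomologies, with the hypothesis forcing certain non-negative defect terms to vanish. Your route through the decomposition of the bounded double complex $(\mathcal{A}^{\bullet,\bullet}(X;\mathbb{C});\partial,\bar{\partial})$ into squares and zigzags is genuinely different and, once the decomposition theorem is granted, more transparent: your bookkeeping (Bott-Chern counts sinks, sitting one total degree above the sources counted by Aeppli; dots cancel) converts the hypothesis into the telescoping identity $S(k)=T(k-1)$, and the minimal-degree argument then kills every zigzag of length $\geq 2$ in one stroke --- I checked the zigzag computation you flag as the crux (for instance on length-$2$ and length-$3$ staircases) and it is right, since a zigzag spans only two adjacent total degrees, so $\partial\bar{\partial}$ vanishes on it identically. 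What your approach buys is structural strength: ``only squares and dots'' immediately yields every equivalent formulation of the $\partial\bar{\partial}$-Lemma, so your final appeal to Theorem \ref{AT13} via equality in \eqref{fro-ine} could even be replaced by the direct observation that dots make all maps in \eqref{complex-cohom-diagram} isomorphisms. What it costs is the decomposition theorem itself --- valid for bounded double complexes of possibly infinite-dimensional vector spaces, but a nontrivial input that this paper only cites as Stelzig's work \cite{Ste, Ste18}, which postdates \cite{AT17}. Two points you should make explicit to be airtight: first, the identification of $\ker\partial\cap\ker\bar{\partial}$ with the span of the sinks on a zigzag of length $\geq 2$ needs the one-line remark that a bottom-row combination killed by both differentials would split off a dot, contradicting indecomposability; second, the finite-dimensionality of $h^{p,q}_{BC}(X)$ and $h^{p,q}_{A}(X)$ (Schweitzer's elliptic theory, Section \ref{Prelim}) is what makes the counts $S(k)$ and $T(k)$ finite, so that the balance condition $S(k)=T(k-1)$ is meaningful, while boundedness of the double complex guarantees that your minimal degree $k_{0}$ exists.
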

The \emph{revised non-K\"{a}hlerness degrees} of $X$ is defined to be the following integers:
$$
N^{k}(X):=h^{k}_{BC}(X)-h^{k}_{A}(X)=h^{k}_{BC}(X)-h^{2n-k}_{BC}(X)\in\mathbb{Z},
$$
where $k\in\{0,1,\cdots,2n\}$.
From definition, we get $N^{k}(X)=-N^{2n-k}(X)$;
moreover, we have the following equivalent conditions (cf. \cite{AT13,AT17}):
\begin{itemize}
  \item [(i)] $X$ satisfies the $\partial\bar{\partial}$-Lemma;
  \item [(ii)] $N^{k}(X)=0$ for any $k\in\{0,1,\cdots,2n\}$;
  \item [(iii)] $\Delta^{k}(X)=0$ for any $k\in\{0,1,\cdots,2n\}$.
\end{itemize}

From the bimeromorphic geometry point of view, it would be interesting to confirm whether the non-K\"{a}hlerness degrees of a compact complex manifold are \emph{bimeromorphic invariants}.
It is noteworthy that for any compact complex surface the non-K\"{a}hlerness degree $\Delta^{1}$ is always zero and $\Delta^{2}=0$ if and only if the surface is K\"{a}hler (cf. \cite{AGT16}).
In addition, if the surface is non-K\"{a}hler then $\Delta^{2}=2$ (cf. \cite{ATV16,Tel06}).
\subsection{Bott-Chern hypercohomology}\label{sec2.2}
Assume that $(X,J)$ is a compact complex manifold with $\mathrm{dim}_{\mathbb{C}}\,X=n$.
Let $\mathscr{E}^{s,t}_{X}$ be the sheaf of germs of differential $(s,t)$-forms on $X$.

For a fixed bidegree $(p,q)$ such that $q\geq p\geq 1$,
let $k=p+q$.
We can define the following sheaves
$$
\mathscr{L}^{l}_{X}=\bigoplus_{s+t=l,\atop s<p,t<q}\mathscr{E}^{s,t}_{X}\,\,\,\mathrm{for}\,\,\,l\leq k-2
$$
and
$$
\mathscr{L}^{l-1}_{X}=\bigoplus_{s+t=l,\atop s\geq p,t\geq q}\mathscr{E}^{s,t}_{X}\,\,\,\mathrm{for}\,\,\,l\geq k.
$$
Observe that
$
\mathscr{L}^{k-2}_{X}=\mathscr{E}^{p-1,q-1}_{X}
$
and
$
\mathscr{L}^{k-1}_{X}=\mathscr{E}^{p,q}_{X}.
$
We define the operators acting on $\mathscr{L}^{\bullet}_{X}$ by setting
$$
\delta^{l}:=d:\mathscr{L}^{l}_{X}\rightarrow\mathscr{L}^{l+1}_{X},
$$
for $l\neq k-2$ (in the case of $l\leq k-3$, we neglect the components which fall outside $\mathscr{L}^{l+1}_{X}$),
and
$$
\delta^{k-2}:=\partial\bar{\partial}:\mathscr{L}^{k-2}_{X}\rightarrow\mathscr{L}^{k-1}_{X}.
$$
Since $\delta^{i+1}\circ\delta^{i}=0$ for any $i\geq0$,
we get a sheaf complex $\mathscr{L}_{X}^{\bullet}$ as follows
\begin{equation*}
\xymatrix{
\cdots \ar[r]^{d} & \mathscr{L}^{k-3}_{X} \ar[r]^{d} & \mathscr{L}^{k-2}_{X}
  \ar[r]^{\partial\bar{\partial}}&\mathscr{L}^{k-1}_{X} \ar[r]^{d} &  \mathscr{L}^{k}_{X} \ar[r]^{d}
  & \cdots .}
\end{equation*}

From definition, we have
$
H^{p,q}_{BC}(X)\cong H^{k-1}(\mathscr{L}^{\bullet}_{X}(X)).
$
The $(p,q)$-\emph{Bott-Chern hypercohomology} of $X$ is defined to be the $(k-1)$-th hypercohomology of $\mathscr{L}_{X}^{\bullet}$, i.e.,
$$
\mathbb{H}^{k-1}(X,\mathscr{L}_{X}^{\bullet}):=R^{k-1}\Gamma(X, \mathscr{L}_{X}^{\bullet}),
$$
where $R^{k-1}\Gamma(X, -)$ is the right derived functor of the global section functor $\Gamma(X, -)$.
Because $\mathscr{L}^{\bullet}_{X}$ is a complex of fine sheaves the following lemma holds (cf. \cite[Chapter VI, \S 12]{Dem12}).

\begin{lem}\label{lem2.4}
Let $X$ be a compact complex manifold.
Then for the fixed bi-degree $(p,q) $ we have
\begin{equation*}
H_{BC}^{p,q}(X)\cong\mathbb{H}^{p+q-1}(X,\mathscr{L}_{X}^{\bullet}).
\end{equation*}
\end{lem}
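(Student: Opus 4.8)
The plan is to deduce the isomorphism from the hypercohomology spectral sequence together with the acyclicity of fine sheaves. To set this up, I would first record two standard facts. On the one hand, $\mathscr{L}^\bullet_X$ is a bounded-below complex of sheaves on the compact (hence paracompact Hausdorff) manifold $X$, and each term $\mathscr{L}^l_X$, being a finite direct sum of the sheaves of germs of smooth forms $\mathscr{E}^{s,t}_X$, is a fine sheaf. On the other hand, on a paracompact Hausdorff space every fine sheaf is soft, and soft sheaves are acyclic for the global-sections functor $\Gamma(X,-)$; therefore $H^b(X,\mathscr{L}^l_X)=0$ for all $b>0$ and every $l$.

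With this in hand, I would introduce the first hypercohomology spectral sequence of the complex $\mathscr{L}^\bullet_X$, arising from a Cartan--Eilenberg (equivalently, injective) resolution of the complex,
$$
{}^{I}E_1^{a,b}=H^b(X,\mathscr{L}^a_X)\Longrightarrow \mathbb{H}^{a+b}(X,\mathscr{L}^\bullet_X).
$$
By the acyclicity just observed, the $E_1$-page is concentrated in the single row $b=0$, where ${}^{I}E_1^{a,0}=\Gamma(X,\mathscr{L}^a_X)=\mathscr{L}^a_X(X)$ and the differential $d_1$ is induced by the maps $\delta^a$ of $\mathscr{L}^\bullet_X$. Consequently the spectral sequence degenerates at the $E_2$-page, giving for every $n$ the identification
$$
\mathbb{H}^{n}(X,\mathscr{L}^\bullet_X)\cong {}^{I}E_2^{n,0}=H^n\bigl(\mathscr{L}^\bullet_X(X)\bigr),
$$
so that the hypercohomology coincides with the cohomology of the complex of global sections. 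Combining this with the identity $H^{p,q}_{BC}(X)\cong H^{k-1}(\mathscr{L}^\bullet_X(X))$ already recorded above (with $k=p+q$) yields
$$
H^{p,q}_{BC}(X)\cong H^{k-1}\bigl(\mathscr{L}^\bullet_X(X)\bigr)\cong \mathbb{H}^{k-1}(X,\mathscr{L}^\bullet_X),
$$
which is the asserted isomorphism.

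I do not expect a genuine obstacle here, since the statement is a formal consequence of the $\Gamma$-acyclicity of fine sheaves; the work is entirely in the bookkeeping. The two points that require care are verifying that softness of each $\mathscr{E}^{s,t}_X$ indeed forces $\Gamma$-acyclicity on the compact manifold $X$, and checking that the $(k-1)$-th cohomology of $\mathscr{L}^\bullet_X(X)$ genuinely recovers $H^{p,q}_{BC}(X)$ — that is, that the kernel/image conditions defining Bott--Chern cohomology in bidegree $(p,q)$ match the kernel of the outgoing $d$ and the image of $\delta^{k-2}=\partial\bar{\partial}$ at the position $\mathscr{L}^{k-1}_X=\mathscr{E}^{p,q}_X$. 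The degree shift, placing the bidegree-$(p,q)$ group in cohomological degree $k-1$, is the most likely source of an indexing slip, so I would double-check it against the explicit description of $\mathscr{L}^\bullet_X$ before concluding.
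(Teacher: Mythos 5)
Your proposal is correct and follows exactly the route the paper takes: the paper disposes of the lemma by citing the standard fact that $\mathscr{L}^{\bullet}_{X}$ is a complex of fine (hence $\Gamma$-acyclic) sheaves, which is precisely what your hypercohomology spectral sequence argument proves in detail, and your degree bookkeeping at $\mathscr{L}^{k-1}_{X}=\mathscr{E}^{p,q}_{X}$ (kernel of $d$ equals $\ker\partial\cap\ker\bar{\partial}$, image of $\delta^{k-2}$ equals $\mathrm{im}\,\partial\bar{\partial}$) is exactly right.
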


Let $\Omega_{X}^{l}$ (resp. $\bar{\Omega}_{X}^{l}$)
be the sheaf of holomorphic (resp. anti-holomorphic) $p$-forms and
$\mathbb{C}_{X}$ the constant sheaf on $X$.
There exist two sheaf complexes of holomorphic and anti-holomorphic forms
\begin{equation*}
\xymatrix@C=0.4cm{
0\ar[r]^{} &\mathcal{O}_{X}\ar[r]^{\partial} &\Omega_{X}^{1} \ar[r]^{\partial}& \cdots  \ar[r]^{\partial}& \Omega_{X}^{p-1}  \ar[r]^{}& 0,}\;\;
\xymatrix@C=0.4cm{
0\ar[r]^{} &\bar{\mathcal{O}}_{X}\ar[r]^{\bar{\partial}} & \bar{\Omega}_{X}^{1} \ar[r]^{\bar{\partial}}& \cdots  \ar[r]^{\bar{\partial}}& \bar{\Omega}_{X}^{q-1}  \ar[r]^{}& 0;}
\end{equation*}
moreover, we can define a new sheaf complex $\mathscr{S}_{X}^{\bullet}$:
\begin{equation*}
\xymatrix@C=0.4cm{
0\ar[r]^{} &\mathcal{O}_{X}+\bar{\mathcal{O}}_{X} \ar[r]^{\partial+\bar{\partial}} &\Omega_{X}^{1}\oplus \bar{\Omega}_{X}^{1} \ar[r]^{}& \cdots  \ar[r]^{}& \Omega_{X}^{p-1}\oplus \bar{\Omega}_{X}^{p-1} \ar[r]^{} & \bar{\Omega}_{X}^{p} \ar[r]^{} & \cdots  \bar{\Omega}_{X}^{q-1} \ar[r]^{} & 0.}
\end{equation*}
Similarly, we have a sheaf complex $\mathscr{B}_{X}^{\bullet}$:
\begin{equation}\label{equal-BC-complex}
\xymatrix@C=0.4cm{
0\ar[r]^{} & \mathbb{C}_{X}\ar[r]^{(+,-)\;\;\;\;\;\;\;} & \mathcal{O}_{X}\oplus \bar{\mathcal{O}}_{X} \ar[r]^{} &\Omega_{X}^{1}\oplus \bar{\Omega}_{X}^{1}
  \ar[r]^{}& \cdots  \ar[r]^{}& \Omega_{X}^{p-1}\oplus \bar{\Omega}_{X}^{p-1} \ar[r]^{} & \bar{\Omega}_{X}^{p} \ar[r]^{} & \cdots  \bar{\Omega}_{X}^{q-1} \ar[r]^{} & 0.}
\end{equation}

It is of importance that the inclusion $\mathscr{S}_{X}^{\bullet}\subset \mathscr{L}_{X}^{\bullet}$ and the morphism
$\mathscr{B}_{X}^{\bullet} \rightarrow \mathscr{S}_{X}^{\bullet}[-1]$
are all quasi-isomorphic, where $\mathscr{S}_{X}^{\bullet}[-1]:=\mathscr{S}_{X}^{\bullet-1}$ (cf. \cite[Lemma 12.1]{Dem12} or \cite[Proposition 4.2, 4.3]{Sch07}).
Consequently, by Lemma \ref{lem2.4} we get the following isomorphisms.

\begin{prop}\label{BC-char}
$H_{BC}^{p,q}(X)
\cong
\mathbb{H}^{p+q-1}(X,\mathscr{L}_{X}^{\bullet})
\cong
\mathbb{H}^{p+q-1}(X,\mathscr{S}_{X}^{\bullet})
\cong
\mathbb{H}^{p+q}(X,\mathscr{B}_{X}^{\bullet})
$.
\end{prop}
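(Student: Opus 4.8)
The plan is to read the three displayed isomorphisms as a single formal consequence of two inputs: Lemma~\ref{lem2.4}, and the invariance of hypercohomology under quasi-isomorphisms of (bounded-below) complexes of sheaves. Recall that $\mathbb{H}^{\bullet}(X,-)=R^{\bullet}\Gamma(X,-)$ is a derived functor, so any morphism of sheaf complexes that is a quasi-isomorphism---i.e. induces isomorphisms on every cohomology sheaf---induces an isomorphism on hypercohomology in every degree. Thus everything reduces to (a) computing $\mathbb{H}^{\bullet}(X,\mathscr{L}_{X}^{\bullet})$ directly, and (b) recognizing $\mathscr{S}_{X}^{\bullet}\hookrightarrow\mathscr{L}_{X}^{\bullet}$ and $\mathscr{B}_{X}^{\bullet}\to\mathscr{S}_{X}^{\bullet}[-1]$ as quasi-isomorphisms.

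First I would dispose of the leftmost isomorphism. Each term $\mathscr{L}_{X}^{l}$ is a finite direct sum of sheaves of smooth $(s,t)$-forms, hence fine and therefore $\Gamma(X,-)$-acyclic; consequently $\mathbb{H}^{\bullet}(X,\mathscr{L}_{X}^{\bullet})$ is computed as the cohomology of the complex of global sections $\mathscr{L}_{X}^{\bullet}(X)$. Unwinding the definition of $\mathscr{L}_{X}^{\bullet}$ and its differentials $\delta^{\bullet}$---in particular the switch from $d$ to $\partial\bar{\partial}$ at level $k-2$---one identifies $H^{k-1}(\mathscr{L}_{X}^{\bullet}(X))$ with $(\ker\partial\cap\ker\bar{\partial})/\operatorname{im}\partial\bar{\partial}$ in bidegree $(p,q)$, i.e. with $H_{BC}^{p,q}(X)$. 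This is exactly Lemma~\ref{lem2.4}, which I would simply invoke.

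The substance lies in the two quasi-isomorphisms. For $\mathscr{S}_{X}^{\bullet}\hookrightarrow\mathscr{L}_{X}^{\bullet}$ the claim is local: on a polydisc one compares the cohomology sheaves of the smooth complex $\mathscr{L}_{X}^{\bullet}$ with those of its holomorphic/anti-holomorphic subcomplex $\mathscr{S}_{X}^{\bullet}$, the vanishing of the extra smooth contributions coming from the Poincar\'{e} and Dolbeault--Grothendieck lemmas. For $\mathscr{B}_{X}^{\bullet}\to\mathscr{S}_{X}^{\bullet}[-1]$ the new feature is the augmentation by the constant sheaf $\mathbb{C}_{X}$ in the bottom degree, and checking the quasi-isomorphism again reduces to a local comparison of cohomology sheaves in which the role of $\mathbb{C}_{X}$ is pinned down by the (holomorphic) Poincar\'{e} lemma. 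Both statements are proved in \cite[Lemma~12.1]{Dem12} and \cite[Proposition~4.2, 4.3]{Sch07}, so I would cite them rather than reprove them.

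Finally I would assemble the chain with attention to the degree bookkeeping. Quasi-isomorphism invariance gives $\mathbb{H}^{p+q-1}(X,\mathscr{S}_{X}^{\bullet})\cong\mathbb{H}^{p+q-1}(X,\mathscr{L}_{X}^{\bullet})$ in equal degree, while for the shifted complex it gives $\mathbb{H}^{p+q}(X,\mathscr{B}_{X}^{\bullet})\cong\mathbb{H}^{p+q}(X,\mathscr{S}_{X}^{\bullet}[-1])=\mathbb{H}^{p+q-1}(X,\mathscr{S}_{X}^{\bullet})$, the last equality being the convention $\mathscr{S}_{X}^{\bullet}[-1]=\mathscr{S}_{X}^{\bullet-1}$. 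Composing these with Lemma~\ref{lem2.4} produces the four-term chain in the statement. I expect the only genuine obstacle to be the local exactness computations underpinning the two quasi-isomorphisms---especially the cohomology sheaves of $\mathscr{L}_{X}^{\bullet}$ at the junction $l=k-2$ and the correct treatment of the $\mathbb{C}_{X}$-augmentation; since these are already available in \cite{Dem12,Sch07}, the remaining work is essentially homological bookkeeping.
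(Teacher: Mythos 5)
Your proposal is correct and follows essentially the same route as the paper: the authors likewise invoke Lemma~\ref{lem2.4} (fineness of the $\mathscr{L}_{X}^{\bullet}$ terms) for the first isomorphism and cite the quasi-isomorphisms $\mathscr{S}_{X}^{\bullet}\subset\mathscr{L}_{X}^{\bullet}$ and $\mathscr{B}_{X}^{\bullet}\to\mathscr{S}_{X}^{\bullet}[-1]$ from \cite[Lemma 12.1]{Dem12} and \cite[Propositions 4.2, 4.3]{Sch07}, concluding by quasi-isomorphism invariance of hypercohomology. Your degree bookkeeping with the shift convention $\mathscr{S}_{X}^{\bullet}[-1]=\mathscr{S}_{X}^{\bullet-1}$ matches the paper's exactly.
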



\section{Blow-up formula of Bott-Chern cohomology}\label{blow-up-formula-BC}
The purpose of this section is to give the proof of Theorem \ref{Thm1}.
Throughout of this section we assume that $X$ is a compact complex manifold with $\textmd{dim}_{\mathbb{C}}X=n$
and $\iota:Z\hookrightarrow X$ be a closed complex submanifold with $\textmd{codim}_{\mathbb{C}}Z=r\geq 2$.
Set $U=X-Z$.

\subsection{Case of $1\leq p,q\leq n$}
Due to the symmetric property of Bott-Chern cohomology we only need to consider the case of $q\geq p\geq1$.
Recall that the space of sections of the constant sheaf $\mathbb{C}_{X}$ over an open subset $V$ is
$$
\Gamma(V,\mathbb{C}_{X})=\{f:V\rightarrow \mathbb{C}\; \textrm{locally constant function}\}.
$$
We define a surjective morphism of sheaves $\chi: \mathbb{C}_{X}\rightarrow \iota_{\ast}\mathbb{C}_{Z}$ by setting
\begin{eqnarray*}
  \chi(V):\Gamma(V,\mathbb{C}_{X})&\rightarrow&
  \Gamma(V,\iota_{\ast}\mathbb{C}_{Z})=\Gamma(V\cap Z, \mathbb{C}_{Z}) \\
  f &\mapsto& f|_{V\cap Z},
\end{eqnarray*}
for any open subset $V\subset X$ such that $V\cap Z \neq \emptyset$, and {\it zero map} otherwise.
Then there is a short exact sequence of sheaves
\begin{equation}\label{pre-exact-seq-C}
\xymatrix{
  0 \ar[r] & \mathcal{K}_{U} \ar[r]^{} & \mathbb{C}_{X} \ar[r]^{\chi} & \iota_{\ast}\mathbb{C}_{Z} \ar[r] & 0}
\end{equation}
where $\mathcal{K}_{U}$ is the kernel sheaf of $\chi$.

For any integer $0\leq s\leq n$ and any open subset $V\subset X$ we can define a morphism
\begin{eqnarray*}
  \varphi(V):\Gamma(V,\Omega_{X}^{s})&\rightarrow&
  \Gamma(V,\iota_{\ast}\Omega^{s}_{Z})=\Gamma(V\cap Z, \Omega^{s}_{Z}) \\
  \alpha &\mapsto&(\iota_{V\cap Z})^{\ast} \alpha,
\end{eqnarray*}
where $\iota_{V\cap Z}:V\cap Z\rightarrow V$ is the holomorphic inclusion and therefore we get a sheaf morphism
$$
\varphi:\Omega_{X}^{s}\rightarrow \iota_{\ast}\Omega^{s}_{Z}.
$$
Set $\mathcal{K}_{X}^{s}=\ker\,(\varphi)$.
Then we have the following sequence of sheaves;
for $s=0$ it is the structure sheaf sequence.

\begin{lem}
The following short sequence of sheaves on $X$ is exact
\begin{equation}\label{Z-pre-exact-seq}
\xymatrix{
  0 \ar[r] & \mathcal{K}^{s}_{X} \ar[r]^{} & \Omega_{X}^{s} \ar[r]^{\varphi} & \iota_{\ast}\Omega_{Z}^{s} \ar[r] & 0.}
\end{equation}
\end{lem}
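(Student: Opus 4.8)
The plan is to observe that the sequence \eqref{Z-pre-exact-seq} is automatically exact at $\mathcal{K}^{s}_{X}$ and at $\Omega_{X}^{s}$, since by construction $\mathcal{K}^{s}_{X}=\ker(\varphi)$: the inclusion $\mathcal{K}^{s}_{X}\hookrightarrow\Omega_{X}^{s}$ is injective and its image is exactly the kernel of $\varphi$. Thus the entire content of the lemma is the surjectivity of $\varphi:\Omega_{X}^{s}\rightarrow\iota_{\ast}\Omega_{Z}^{s}$. Because surjectivity of a morphism of sheaves may be tested on stalks, I would reduce the statement to showing that $\varphi_{x}$ is surjective for every $x\in X$.

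For $x\in X\setminus Z$ there is a neighbourhood of $x$ disjoint from $Z$, so $(\iota_{\ast}\Omega_{Z}^{s})_{x}=0$ and there is nothing to prove. For $x\in Z$, I would first identify the stalk of the target sheaf: since $\iota$ is a closed embedding, the sets $V\cap Z$, as $V$ ranges over the neighbourhoods of $x$ in $X$, form a cofinal family of neighbourhoods of $x$ in $Z$, whence $(\iota_{\ast}\Omega_{Z}^{s})_{x}\cong(\Omega_{Z}^{s})_{x}$. It therefore remains to lift an arbitrary germ of a holomorphic $s$-form on $Z$ to a germ of a holomorphic $s$-form on $X$.

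The key step is an explicit local construction. I would choose local holomorphic coordinates $(z_{1},\dots,z_{n})$ on a neighbourhood $V$ of $x$ adapted to $Z$, so that $Z\cap V=\{z_{n-r+1}=\cdots=z_{n}=0\}$ and $(z_{1},\dots,z_{n-r})$ restrict to coordinates on $Z\cap V$. A germ $\alpha\in(\Omega_{Z}^{s})_{x}$ then has the form $\alpha=\sum_{|I|=s} g_{I}(z_{1},\dots,z_{n-r})\,dz_{I}$, where $I$ ranges over the increasing multi-indices in $\{1,\dots,n-r\}$. I would define the lift $\tilde{\alpha}:=\sum_{|I|=s}\tilde{g}_{I}\,dz_{I}\in(\Omega_{X}^{s})_{x}$, where $\tilde{g}_{I}(z_{1},\dots,z_{n}):=g_{I}(z_{1},\dots,z_{n-r})$ is the holomorphic extension that is independent of $z_{n-r+1},\dots,z_{n}$, and the $dz_{I}$ use the same indices. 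Since $\iota^{\ast}z_{j}=0$ for $j>n-r$, we have $\iota^{\ast}dz_{j}=0$ for such $j$, while $\iota^{\ast}dz_{j}=dz_{j}$ for $j\leq n-r$ and $\iota^{\ast}\tilde{g}_{I}=g_{I}$; hence $\varphi(\tilde{\alpha})=\iota^{\ast}\tilde{\alpha}=\alpha$, which proves surjectivity of $\varphi_{x}$ and completes the argument.

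There is no genuine obstacle here: the only point requiring any care is the explicit lift, which is immediate in adapted coordinates and amounts to the statement that holomorphic forms on a submanifold extend holomorphically by being taken constant in the normal directions. I would also remark that the hypothesis $r\geq 2$ plays no role in this lemma---the construction works for any codimension $r\geq 1$---and that for $s=0$ the sequence specialises to the ideal-sheaf sequence $0\rightarrow\mathcal{K}^{0}_{X}\rightarrow\mathcal{O}_{X}\rightarrow\iota_{\ast}\mathcal{O}_{Z}\rightarrow 0$, recovering the familiar surjectivity of restriction of holomorphic functions.
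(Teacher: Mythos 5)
Your proof is correct and follows essentially the same route as the paper: both reduce to stalkwise surjectivity of $\varphi$ at points of $Z$ and lift a germ using adapted coordinates, your explicit componentwise extension $\tilde{g}_{I}(z_{1},\dots,z_{n})=g_{I}(z_{1},\dots,z_{n-r})$ being exactly the pullback $(\tau_{V})^{\ast}\alpha$ along the local holomorphic projection $\tau(z_{1},\dots,z_{n})=(z_{1},\dots,z_{n-r})$ that the paper uses. Your closing remarks (that $r\geq 2$ is not needed here and that $s=0$ recovers the structure sheaf sequence) are accurate and consistent with the paper's own comment.
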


\begin{proof}
We only need to show that the sheaf morphism $\varphi$ is surjective.
By definition, the stalk of the direct image $\iota_{\ast}\Omega_{Z}^{s}$ at a point $x$ is $(\Omega_{Z}^{s})_{x}$ if $x\in Z$ and zero otherwise.
It follows that the map of stalks
$\varphi_{x}:(\Omega^{s}_{X})_{x}\rightarrow(\iota_{\ast}\Omega^{s}_{Z})_{x}$ is zero map if $x\in U$.
The remaining thing to do in the proof is to verify that for any $x\in Z$ the morphism of stalks
$$
\varphi_{x}:(\Omega^{s}_{X})_{x}\rightarrow(\iota_{\ast}\Omega^{s}_{Z})_{x}
$$
is surjective.
Let $(W; z_{1},z_{2},\cdots,z_{n})$ be a local coordinate chart of $x$ such that
$$W\cap Z=\{z_{n-r+1}=\cdots=z_{n}=0\}.$$
Then there exists a holomorphic projection given by
\begin{eqnarray*}
\tau:W &\rightarrow& W\cap Z, \\
(z_{1},\cdots,z_{n}) &\mapsto& (z_{1},\cdots,z_{n-r}).
\end{eqnarray*}
For any germ $\alpha_{x} \in (\iota_{\ast}\Omega_{Z}^{s})_{x}$ we can choose a representative $(V,\alpha)$,
here $\alpha$ is a holomorphic $p$-form on $V\cap Z$.
Particularly, we can choose $V$ small enough such that $V\subset W$ and then the restriction of $\tau$ on $V$ gives rise to a holomorphic map
$\tau_{V}:V\rightarrow V\cap Z$
satisfying $\tau_{V}\circ \iota_{V\cap Z}=\textmd{id}_{V\cap Z}$.
Let $\beta=(\tau_{V})^{\ast}(\alpha)$, then $(V,\beta)$ represents a germ, denoted by $\beta_{x}$, in the stalk $(\Omega^{s}_{X})_{x}$.
From definition, we get
$$
(\iota_{V\cap Z})^{\ast}\beta=(\iota_{V\cap Z})^{\ast}\bigl((\tau_{V})^{\ast}(\alpha)\bigr)=(\tau_{V}\circ\iota_{V\cap Z})^{\ast}(\alpha)=\alpha.
$$
It follows that $\varphi_{x}(\beta_{x})=\alpha_{x}$, namely, $\varphi_{x}$ is surjective.
\end{proof}

Taking the complex conjugation of \eqref{Z-pre-exact-seq},
we get the the anti-holomorphic sheaf sequence
\begin{equation}\label{Z-pre-exact-seq-conju}
\xymatrix{
  0 \ar[r] &  \bar{\mathcal{K}}^{t}_{X} \ar[r]^{} &\bar{\Omega}_{X}^{t} \ar[r]^{\bar{\varphi}} & \iota_{\ast}\bar{\Omega}_{Z}^{t} \ar[r] & 0,}
\end{equation}
for any integer $0\leq t\leq n$.
Since the direct image functor $\iota_{\ast}$ commutes with the direct sum we derive a short exact sequence by taking direct sum of \eqref{Z-pre-exact-seq} and \eqref{Z-pre-exact-seq-conju}
\begin{equation}\label{sheaf-sum-exact}
\xymatrix{
  0 \ar[r] &  \mathcal{K}^{s}_{X}\oplus\bar{\mathcal{K}}^{t}_{X} \ar[r]^{} & \Omega_{X}^{s}\oplus \bar{\Omega}_{X}^{t} \ar[r]^{\varphi\oplus\bar{\varphi}\quad} & \iota_{\ast}(\Omega_{Z}^{s}\oplus \bar{\Omega}_{Z}^{t}) \ar[r] & 0.}
\end{equation}
Observing that both operators $\partial$ and $\bar{\partial}$ commute with the pullback of holomorphic maps and $\iota_{\ast}$ is an exact functor,
we get the following commutative diagram from \eqref{sheaf-sum-exact}
\begin{equation}\label{commu-sheaf-sum-exact}
\xymatrix@C=0.5cm{
0 \ar[r]^{} & \mathcal{K}^{s}_{X}\oplus\bar{\mathcal{K}}^{t}_{X}  \ar[d]_{\partial\oplus \bar{\partial}} \ar[r]^{} &  \Omega_{X}^{s}\oplus \bar{\Omega}_{X}^{t}\ar[d]_{\partial\oplus \bar{\partial}} \ar[r]^{} & \iota_{\ast} (\Omega_{Z}^{s}\oplus \bar{\Omega}_{Z}^{t})\ar[d]_{\partial\oplus \bar{\partial}} \ar[r]^{} & 0 \\
0 \ar[r] & \mathcal{K}^{s+1}_{X}\oplus\bar{\mathcal{K}}^{t+1}_{X} \ar[r]^{} &  \Omega_{X}^{s+1}\oplus \bar{\Omega}_{X}^{t+1} \ar[r]^{} &
\iota_{\ast} (\Omega_{Z}^{s+1}\oplus \bar{\Omega}_{Z}^{t+1})\ar[r]^{} & 0.}
\end{equation}
By the definition of the sheaf complex \eqref{equal-BC-complex},
the sequences \eqref{pre-exact-seq-C} and \eqref{commu-sheaf-sum-exact} implies

\begin{prop}
For $q\geq p\geq 1$,
there is a short exact sequence of sheaf complexes
\begin{equation}\label{Z-holomor-seq}
\xymatrix{
  0 \ar[r] & \mathscr{K}_{X}^{\bullet} \ar[r]^{} & \mathscr{B}_{X}^{\bullet} \ar[r]^{} & \iota_{\ast}\mathscr{B}_{Z}^{\bullet} \ar[r] & 0,}
\end{equation}
where $\mathscr{K}_{X}^{\bullet}$ is the sheaf complex
\begin{equation}\label{kernel-complex}
\xymatrix@C=0.4cm{
\mathcal{K}_{U} \ar[r]^{(+,-)\;\;\;\;\;\;\;} & \mathcal{K}_{X}^{0}\oplus \bar{\mathcal{K}}_{X}^{0} \ar[r]^{} &\mathcal{K}_{X}^{1}\oplus \bar{\mathcal{K}}_{X}^{1}
  \ar[r]^{}& \cdots  \ar[r]^{}& \mathcal{K}_{X}^{p-1}\oplus \bar{\mathcal{K}}_{X}^{p-1} \ar[r]^{} & \bar{\mathcal{K}}_{X}^{p} \ar[r]^{} & \cdots  \bar{\mathcal{K}}_{X}^{q-1} \ar[r]^{} & 0.}
\end{equation}
\end{prop}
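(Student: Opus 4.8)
The plan is to obtain the desired sequence of complexes termwise from the short exact sequences (\ref{pre-exact-seq-C}) and (\ref{sheaf-sum-exact}) already established, so that the only substantive work is to check that the vertical restriction maps assemble into a \emph{morphism of complexes} whose termwise kernel is exactly $\mathscr{K}_X^{\bullet}$. Concretely, I would define a morphism $\Phi:\mathscr{B}_X^{\bullet}\rightarrow\iota_{\ast}\mathscr{B}_Z^{\bullet}$ degree by degree: in degree $0$ by the restriction map $\chi$ of (\ref{pre-exact-seq-C}); in degrees $1\leq l\leq p$ by $\varphi\oplus\bar{\varphi}$ acting on the summand $\Omega_X^{l-1}\oplus\bar{\Omega}_X^{l-1}$; and in degrees $p+1\leq l\leq q$ by $\bar{\varphi}$ acting on $\bar{\Omega}_X^{l-1}$. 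Granting that $\Phi$ is a chain map, the proposition then follows from three facts already in hand: each component of $\Phi$ is surjective (by the lemma yielding (\ref{Z-pre-exact-seq}) and its conjugate (\ref{Z-pre-exact-seq-conju})), the direct image functor $\iota_{\ast}$ is exact, and the termwise kernels are precisely $\mathcal{K}_U$ and $\mathcal{K}_X^{s}\oplus\bar{\mathcal{K}}_X^{t}$, which are exactly the terms displayed in (\ref{kernel-complex}).

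The heart of the argument is to verify that $\Phi$ commutes with the differentials, i.e. that every square
\[
\xymatrix@C=0.5cm{
\mathscr{B}_X^{l}\ar[r]\ar[d]_{\Phi^{l}} & \mathscr{B}_X^{l+1}\ar[d]^{\Phi^{l+1}}\\
\iota_{\ast}\mathscr{B}_Z^{l}\ar[r] & \iota_{\ast}\mathscr{B}_Z^{l+1}
}
\]
commutes. For $l\geq 1$ this is precisely the commutativity recorded in (\ref{commu-sheaf-sum-exact}): the differentials of $\mathscr{B}_X^{\bullet}$ and $\mathscr{B}_Z^{\bullet}$ are built from $\partial$ and $\bar{\partial}$, both of which commute with pullback along the holomorphic inclusions $\iota_{V\cap Z}$, so $\varphi$ and $\bar{\varphi}$ intertwine them. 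The one square needing a separate check is the augmentation square at the bottom, relating $\chi$ to the structure map $(+,-)$; here I would argue on stalks that a locally constant function $c$, regarded in $\mathcal{O}_X\oplus\bar{\mathcal{O}}_X$ via $(+,-)$ and then restricted to $Z$, coincides with the image under $(+,-)$ of its restriction $c|_{V\cap Z}$, since restriction of locally constant functions commutes with the (anti)diagonal inclusion of constants. This reduces to the observation that $\varphi$ in degree $0$ is just restriction of holomorphic functions, with which $\chi$ is manifestly compatible.

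Once commutativity is established, $\Phi$ is a morphism of complexes and its termwise kernel $\ker\Phi$ is automatically a subcomplex of $\mathscr{B}_X^{\bullet}$, because the differential $\delta$ carries $\ker\Phi^{l}$ into $\ker\Phi^{l+1}$: indeed $\Phi^{l+1}\circ\delta=\delta\circ\Phi^{l}$ vanishes on $\ker\Phi^{l}$. Matching the kernel terms against the indexing of (\ref{kernel-complex}) — namely $\mathcal{K}_U$ in degree $0$, $\mathcal{K}_X^{l-1}\oplus\bar{\mathcal{K}}_X^{l-1}$ for $1\leq l\leq p$, and $\bar{\mathcal{K}}_X^{l-1}$ for $p+1\leq l\leq q$ — identifies $\ker\Phi$ with $\mathscr{K}_X^{\bullet}$. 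Combining the termwise exactness with the fact that $\Phi$ is a surjective chain map then yields the short exact sequence of complexes (\ref{Z-holomor-seq}).

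I expect the only genuine obstacle to be bookkeeping rather than anything conceptual: one must take care that the truncation of $\mathscr{B}_X^{\bullet}$ at the holomorphic degree $p-1$ (after which only the antiholomorphic terms $\bar{\Omega}_X^{p},\dots,\bar{\Omega}_X^{q-1}$ survive) is respected by $\Phi$ and by the kernel, so that $\mathscr{K}_X^{\bullet}$ has exactly the shape in (\ref{kernel-complex}). This amounts to checking that $\varphi$ and $\bar{\varphi}$ are defined and surjective in every degree that appears, which the lemma supplies uniformly for all $0\leq s\leq n$, and that the hypothesis $q\geq p\geq 1$ guarantees the two regimes of the complex are non-degenerate.
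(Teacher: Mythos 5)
Your proposal is correct and follows essentially the same route as the paper, which obtains the proposition directly from the surjection \eqref{pre-exact-seq-C}, the termwise surjectivity in \eqref{sheaf-sum-exact}, and the commutativity \eqref{commu-sheaf-sum-exact}, leaving the degree-by-degree assembly implicit. Your write-up simply makes explicit what the paper compresses: the chain-map checks (including the augmentation square for $\chi$ versus $(+,-)$ and the truncation at the junction $\Omega_{X}^{p-1}\oplus\bar{\Omega}_{X}^{p-1}\rightarrow\bar{\Omega}_{X}^{p}$) and the identification of the termwise kernels with \eqref{kernel-complex}, all of which are sound.
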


To prove the theorem we need the following basic result from homological algebra.
\begin{prop}\label{cla}
Consider a commutative diagram of two exact sequences of complex vector spaces
\begin{equation*}
\xymatrix@C=0.5cm{
  \cdots \ar[r]^{} & A_1 \ar[d]_{i_1} \ar[r]^{f_1}& A_2 \ar[d]_{i_2} \ar[r]^{f_2}& A_3 \ar[d]_{i_3} \ar[r]^{f_3}& A_4 \ar[d]_{i_4} \ar[r]^{}& \cdots \\
\cdots \ar[r]^{} & B_1 \ar[r]^{g_1}& B_2 \ar[r]^{g_2}& B_3 \ar[r]^{g_3}& B_4 \ar[r]^{}&  \cdots.}
\end{equation*}
Assume that $i_1,\,i_4$ are isomorphic, and $i_2,\,i_3$ are injective.
Then there is a natural isomorphism
$$
B_2\cong A_2\oplus \Big(B_3/ i_3(A_3)\Big).
$$
\end{prop}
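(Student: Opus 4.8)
The plan is to exhibit a natural short exact sequence of vector spaces
\begin{equation*}
0 \longrightarrow A_2 \xrightarrow{\; i_2 \;} B_2 \xrightarrow{\; \bar{g}_2 \;} B_3/i_3(A_3) \longrightarrow 0
\end{equation*}
and then to split it. Since every short exact sequence of complex vector spaces splits, such a sequence immediately gives $B_2 \cong A_2 \oplus \bigl(B_3/i_3(A_3)\bigr)$, which is exactly the asserted isomorphism. Here $\bar{g}_2$ is the composite of $g_2 \colon B_2 \to B_3$ with the quotient projection $B_3 \to B_3/i_3(A_3)$, so it is defined on all of $B_2$. The injectivity of $i_2$ is a hypothesis, and $\bar g_2$ kills $\operatorname{im} i_2$ by commutativity, since $g_2\circ i_2 = i_3\circ f_2$ forces $g_2(i_2(A_2)) = i_3(f_2(A_2)) \subseteq i_3(A_3)$; thus the sequence is at least a complex with $\operatorname{im} i_2 \subseteq \ker \bar{g}_2$, and left-exact.

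The two substantive points are the reverse inclusion $\ker \bar{g}_2 \subseteq \operatorname{im} i_2$ and the surjectivity of $\bar{g}_2$; both are diagram chases of four-lemma type. For the reverse inclusion, I would take $b_2$ with $g_2(b_2) = i_3(a_3)$, apply $g_3$ and use $g_3\circ g_2 = 0$ together with $g_3\circ i_3 = i_4\circ f_3$ to get $i_4(f_3(a_3)) = 0$; injectivity of $i_4$ then forces $f_3(a_3)=0$, so exactness of the top row at $A_3$ gives $a_3 = f_2(a_2)$. Then $g_2(b_2 - i_2(a_2)) = 0$, so exactness of the bottom row at $B_2$ writes $b_2 - i_2(a_2) = g_1(b_1)$, and finally surjectivity of $i_1$ lets me replace $b_1 = i_1(a_1)$ and use $g_1\circ i_1 = i_2\circ f_1$ to conclude $b_2 = i_2\bigl(a_2 + f_1(a_1)\bigr) \in \operatorname{im} i_2$. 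This step uses precisely the two end hypotheses, namely that $i_4$ is injective and $i_1$ is surjective.

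The surjectivity of $\bar{g}_2$ is the step I expect to be the main obstacle, because it is the only place that genuinely reaches past $A_3,B_3$ into the next spot of the ladder. Given $b_3 \in B_3$, I want $a_3 \in A_3$ with $b_3 - i_3(a_3) \in \operatorname{im} g_2 = \ker g_3$, that is, $g_3(b_3) = i_4(f_3(a_3))$; since $i_4$ is an isomorphism this reduces to solving $f_3(a_3) = i_4^{-1}(g_3(b_3)) =: a_4$, which is possible exactly when $a_4 \in \operatorname{im} f_3$. By exactness of the top row at $A_4$ this is equivalent to $f_4(a_4)=0$, and commutativity together with $g_4\circ g_3 = 0$ shows that $f_4(a_4)$ lies in the kernel of the next vertical map. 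The chase therefore closes as soon as that next vertical map is injective; in the situation where the proposition is applied, the ambient long exact hypercohomology sequences of \eqref{Z-holomor-seq} supply this, since the relevant pullback maps on the $\mathscr{B}^\bullet$- and $\mathscr{K}^\bullet$-terms remain injective in the adjacent degree.

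Granting surjectivity, the displayed sequence is exact, and because we are working over a field it splits, yielding the natural decomposition $B_2 \cong A_2 \oplus \bigl(B_3/i_3(A_3)\bigr)$. In summary, the skeleton of the argument is: (i) define $\bar g_2$ and record $\operatorname{im} i_2 \subseteq \ker\bar g_2$; (ii) chase $\ker\bar g_2 \subseteq \operatorname{im} i_2$ using $i_1$ surjective and $i_4$ injective; (iii) chase surjectivity of $\bar g_2$ using $i_4$ surjective, exactness at $A_4$, and injectivity one step further along; (iv) split the resulting short exact sequence of vector spaces. The only delicate input is (iii), and it is the place I would devote the most care to, since with only the four stated columns it is exactly the datum that must be controlled.
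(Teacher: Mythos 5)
Your construction is, modulo packaging, the paper's own proof: the paper uses the injectivity of $i_2,i_3$ to split $B_2\cong A_2\oplus \mathrm{coker}\,i_2$ and $B_3\cong A_3\oplus \mathrm{coker}\,i_3$, forms the induced map $\bar{g}_2\colon \mathrm{coker}\,i_2\to \mathrm{coker}\,i_3$, and disposes of its bijectivity with one sentence (``the standard argument of diagram-chasing''). Over $\mathbb{C}$ this is exactly your short exact sequence $0\to A_2\to B_2\to B_3/i_3(A_3)\to 0$ plus splitting, and your kernel chase, using precisely $i_1$ surjective and $i_4$ injective, is the injectivity half of that chase, carried out correctly. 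The decisive point is your hesitation at step (iii), and you are right that it cannot be closed from the stated hypotheses: in fact Proposition \ref{cla} \emph{as stated is false}. Extend both rows of the following diagram by $0$ on either side:
\begin{equation*}
\xymatrix@C=1.2cm{
0 \ar[r] \ar[d] & \mathbb{C} \ar[r]^{\mathrm{id}} \ar[d]^{\mathrm{id}} & \mathbb{C} \ar[r]^{0} \ar[d]^{x\mapsto(x,0)} & \mathbb{C} \ar[r]^{\mathrm{id}} \ar[d]^{\mathrm{id}} & \mathbb{C} \ar[d] \\
0 \ar[r] & \mathbb{C} \ar[r]_{x\mapsto(x,0)} & \mathbb{C}^{2} \ar[r]_{(x,y)\mapsto y} & \mathbb{C} \ar[r] & 0
}
\end{equation*}
Both rows are exact, all squares commute, $i_1,i_4$ are isomorphisms and $i_2,i_3$ are injective, yet $B_2=\mathbb{C}$ while $A_2\oplus\bigl(B_3/i_3(A_3)\bigr)\cong\mathbb{C}^{2}$. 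Concretely, $\bar{g}_2\colon \mathrm{coker}\,i_2=0\to \mathrm{coker}\,i_3=\mathbb{C}$ is injective but not surjective, because $a_4:=i_4^{-1}(g_3(b_3))$ need not lie in $\mathrm{im}\,f_3$ when the next vertical map (here $i_5=0\colon\mathbb{C}\to 0$) fails to be injective. So the extra hypothesis you isolated is genuinely necessary, and the paper's appeal to a standard chase for surjectivity is an error, not a routine step you failed to reproduce.

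Be careful, however, with your proposed rescue. In the application \eqref{final-cohom-long-seq} the next vertical map is $\pi^{\ast}\colon\mathbb{H}^{p+q+1}(X,\mathscr{B}^{\bullet}_{X})\to\mathbb{H}^{p+q+1}(\tilde{X},\mathscr{B}^{\bullet}_{\tilde{X}})$, and Proposition \ref{BC-char} identifies the hypercohomology of $\mathscr{B}^{\bullet}$ (a complex built for the fixed bidegree $(p,q)$) with Bott--Chern cohomology \emph{only in degree} $p+q$; in degree $p+q+1$ it is not a Bott--Chern group of any bidegree, so the injectivity theorem invoked for \eqref{inj-X} (\cite[Theorem 12.9]{Dem12}) does not apply, and the injectivity ``in the adjacent degree'' that you assert is supplied by the ambient sequences is in fact unproven -- a point on which the paper is equally silent. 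In summary: your skeleton becomes a complete and correct proof of a corrected statement once ``$i_5$ injective'' (with one more commuting square $A_4\to A_5$ over $B_4\to B_5$) is added to the hypotheses, but neither the paper's chase nor your appeal to the application establishes the proposition, or its use in the proof of Theorem \ref{Thm1}(i), as they stand.
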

\begin{proof}
By the hypothesis, $i_2$ and $i_3$ are injective and therefore we have
$$
B_2\cong A_2\oplus \mathrm{coker}\, i_2,\,\,\, B_3\cong A_3\oplus \mathrm{coker}\,i_3.
$$
The commutativity of the second square implies $g_{2}(\mathrm{im}(i_{2}))\subset\mathrm{im}(i_{3})$ and therefore there exists a well-defined morphism of cokernels
$$
\bar{g}_{2}:\mathrm{coker}\,i_2\rightarrow\mathrm{coker}\,i_3.
$$
The injectivity and the surjectivity of $\bar{g}_{2}$ can be obtained by the standard argument of diagram-chasing.
\end{proof}

We are now in a position to prove the first part of Theorem \ref{Thm1}.
\begin{proof}[Proof of Theorem \ref{Thm1} $\mathrm{(i)}$]
Recall that $X$ is a compact complex manifold with $\mathrm{dim}_{\mathbb{C}}\,X=n$,
$\iota: Z\hookrightarrow X$ a closed complex submanifold such that $\mathrm{codim}_{\mathbb{C}}Z=r\geq2$ and $U=X-Z$ the complimentary open set.
Assume that $\pi:\tilde{X}\rightarrow X$ is the blow-up of $X$ with the center $Z$.
Let $E:=\pi^{-1}(Z)$ be the exceptional divisor and $\tilde{U}:=\tilde{X}-E$.
Then the restriction map
$$
\pi|_{\tilde{U}}: \tilde{U} \rightarrow U
$$
is biholomorphic.
Moreover, we have a commutative diagram
\begin{equation}\label{b-l-d}
\xymatrix{
E \ar[d]_{\pi|_{E}} \ar@{^{(}->}[r]^{\tilde{\iota}} & \tilde{X}\ar[d]^{\pi}\\
 Z \ar@{^{(}->}[r]^{\iota} & X,
}
\end{equation}
where $\tilde{\iota}: E \hookrightarrow \tilde{X}$ is the inclusion.

Due to the symmetric property of Bott-Chern cohomology we only consider the case of $q\geq p\geq1$.
For any bi-degree $(p,q)$ satisfying $q\geq p\geq1$ we can construct the following short exact sequence of sheaves on $\tilde{X}$ similar to \eqref{Z-holomor-seq}
\begin{equation}\label{E-holomor-seq}
\xymatrix{
  0 \ar[r] & \mathscr{K}^{\bullet}_{\tilde{X}} \ar[r]^{} & \mathscr{B}_{\tilde{X}}^{\bullet} \ar[r]^{} & \tilde{\iota}_{\ast}\mathscr{B}_{E}^{\bullet} \ar[r] & 0,}
\end{equation}
where $\mathscr{B}_{\tilde{X}}$ and $\mathscr{B}_{E}$ are defined as \eqref{equal-BC-complex},
and $\mathscr{K}^{\bullet}_{\tilde{X}}$ is the counterpart of \eqref{kernel-complex} on $\tilde{X}$.

Note that $\pi:\tilde{X}\rightarrow X$ is a proper holomorphic map the pullback $\pi^{\ast}$ of holomorphic forms induces a morphism (over $\pi$) from $\Omega_{X}^{s}$ to $\Omega_{\tilde{X}}^{s}$,
and also a morphism from $\bar{\Omega}_{X}^{s}$ to $\bar{\Omega}_{\tilde{X}}^{s}$;
moreover, the blow-up diagram \eqref{b-l-d} gives rise to a commutative diagram for the long exact sequences of hypercohomologies associated to \eqref{Z-holomor-seq} and \eqref{E-holomor-seq}, respectively.
\begin{equation}\label{comm-long-seq-0}
\xymatrix@C=0.4cm{
  \cdots \ar[r]^{}
  & \mathbb{H}^{p+q}(X, \mathscr{K}_{X}^{\bullet})\ar[d]_{\pi^{\ast}} \ar[r]^{}
  & \mathbb{H}^{p+q}(X, \mathscr{B}_{X}^{\bullet})\ar[d]_{\pi^{\ast}} \ar[r]^{}
  & \mathbb{H}^{p+q}(X, \iota_{\ast}\mathscr{B}_{Z}^{\bullet}) \ar[d]_{(\pi|_{E})^{\ast}} \ar[r]^{}
  & \mathbb{H}^{p+q+1}(X, \mathscr{K}_{X}^{\bullet}) \ar[d]_{\pi^{\ast}} \ar[r]^{} & \cdots \\
   \cdots \ar[r]
  & \mathbb{H}^{p+q}(\tilde{X}, \mathscr{K}_{\tilde{X}}^{\bullet})\ar[r]^{}
  & \mathbb{H}^{p+q}(\tilde{X}, \mathscr{B}_{\tilde{X}}^{\bullet}) \ar[r]^{}
  & \mathbb{H}^{p+q}(\tilde{X}, \tilde{\iota}_{\ast}\mathscr{B}_{E}^{\bullet}) \ar[r]
  & \mathbb{H}^{p+q+1}(\tilde{X}, \mathscr{K}_{\tilde{X}}^{\bullet})\ar[r]&\cdots}
\end{equation}

Because the direct image functor $\iota_{\ast}$ is exact
and $R\Gamma(X,\iota_{\ast}(-))=R\Gamma(Z,-)$, by Proposition \ref{BC-char} we obtain
$$
\mathbb{H}^{p+q}(X, \iota_{\ast}\mathscr{B}_{Z}^{\bullet})
\cong
\mathbb{H}^{p+q}(Z, \mathscr{B}_{Z}^{\bullet})
\cong
H_{BC}^{p,q}(Z),
$$
also by Proposition \ref{BC-char} we have $\mathbb{H}^{p+q}(X, \mathscr{B}_{X}^{\bullet})\cong H_{BC}^{p,q}(X)$.
As a result, \eqref{comm-long-seq-0} becomes
\begin{equation}\label{comm-cohom-long-seq}
\xymatrix@C=0.4cm{
  \cdots \ar[r]^{}
  & \mathbb{H}^{p+q}(X, \mathscr{K}_{X}^{\bullet})\ar[d]_{\pi^{\ast}} \ar[r]^{}
  & H_{BC}^{p,q}(X)\ar[d]_{\pi^{\ast}} \ar[r]^{}
  & H_{BC}^{p,q}(Z) \ar[d]_{(\pi|_{E})^{\ast}} \ar[r]^{}
  & \mathbb{H}^{p+q+1}(X, \mathscr{K}_{X}^{\bullet}) \ar[d]_{\pi^{\ast}} \ar[r]^{} & \cdots \\
   \cdots \ar[r]
  & \mathbb{H}^{p+q}(\tilde{X}, \mathscr{K}_{\tilde{X}}^{\bullet})\ar[r]^{}
  & H_{BC}^{p,q}(\tilde{X}) \ar[r]^{}
  & H_{BC}^{p,q}(E) \ar[r]
  & \mathbb{H}^{p+q+1}(\tilde{X}, \mathscr{K}_{\tilde{X}}^{\bullet})\ar[r]&\cdots}
\end{equation}

We claim the following result and give its proof at the end of this subsection.

\begin{lem}\label{kersheaf-Hcoh-iso}
For any $k\in \mathbb{N}$, the morphism
$$
\pi^{\ast}:\mathbb{H}^{k}(X, \mathscr{K}_{X}^{\bullet})\rightarrow\mathbb{H}^{k}(\tilde{X}, \mathscr{K}_{\tilde{X}}^{\bullet})
$$
is isomorphic.
\end{lem}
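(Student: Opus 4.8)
The plan is to lift the statement to the level of sheaf complexes and to prove that the pullback $\pi^{\ast}$ defines an isomorphism $\mathscr{K}_{X}^{\bullet}\xrightarrow{\;\sim\;}R\pi_{\ast}\mathscr{K}_{\tilde{X}}^{\bullet}$ in the derived category of sheaves on $X$. Granting this, one applies the global hypercohomology functor $R\Gamma(X,-)$ and uses the composition identity $R\Gamma(X,R\pi_{\ast}(-))=R\Gamma(\tilde{X},-)$ to obtain $\mathbb{H}^{k}(X,\mathscr{K}_{X}^{\bullet})\cong\mathbb{H}^{k}(\tilde{X},\mathscr{K}_{\tilde{X}}^{\bullet})$ for every $k$, the comparison map being precisely $\pi^{\ast}$. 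This is the standard device for promoting a statement that is local on $X$ into the desired global isomorphism.

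To prove the derived isomorphism I would argue locally. Since $\pi|_{\tilde{U}}:\tilde{U}\to U$ is biholomorphic and both $\mathscr{K}_{X}^{\bullet}$ and $\mathscr{K}_{\tilde{X}}^{\bullet}$ restrict to the Bott-Chern sheaf complex $\mathscr{B}^{\bullet}$ on the respective complements of $Z$ and $E$, the cone of $\pi^{\ast}$ is supported on $Z$; hence it suffices to show that $(\mathscr{K}_{X}^{\bullet})_{x}\to(R\pi_{\ast}\mathscr{K}_{\tilde{X}}^{\bullet})_{x}$ is a quasi-isomorphism for each $x\in Z$. The right-hand stalk is $\varinjlim_{V\ni x}\mathbb{H}^{\bullet}(\pi^{-1}(V),\mathscr{K}_{\tilde{X}}^{\bullet})$, which by excision depends only on a neighbourhood of $E$ sitting over a small polydisk $V$ with $V\cap Z$ a coordinate subspace and $\pi^{-1}(V)$ the blow-up of $V$ along $V\cap Z$.

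To organize the computation I would apply $R\pi_{\ast}$ to the short exact sequence of complexes \eqref{E-holomor-seq} and compare it, through $\pi^{\ast}$, with the sequence \eqref{Z-holomor-seq} on $X$, producing a morphism of distinguished triangles relating $\mathscr{K}$, $\mathscr{B}$, and the direct image of the complex on the centre, respectively the exceptional divisor. The degree-zero direct images already agree: $\pi_{\ast}\mathscr{B}_{\tilde{X}}^{\bullet}=\mathscr{B}_{X}^{\bullet}$ and $(\pi|_{E})_{\ast}\mathscr{B}_{E}^{\bullet}=\mathscr{B}_{Z}^{\bullet}$, both following from the classical identities $\pi_{\ast}\mathcal{O}_{\tilde{X}}=\mathcal{O}_{X}$, $\pi_{\ast}\Omega_{\tilde{X}}^{s}=\Omega_{X}^{s}$ and their analogues for the projective bundle $\pi|_{E}:E=\mathbb{P}(N_{Z/X})\to Z$. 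By the nine-lemma for triangles, the cone on $\mathscr{K}$ is the fibre of the induced map of cones on $\mathscr{B}$ and on $\iota_{\ast}\mathscr{B}_{E}$; thus the vanishing of the former reduces to the assertion that restriction to $E$ induces isomorphisms on the higher direct images, $R^{i}\pi_{\ast}\mathscr{B}_{\tilde{X}}^{\bullet}\cong\iota_{\ast}R^{i}(\pi|_{E})_{\ast}\mathscr{B}_{E}^{\bullet}$ for $i\geq1$. Geometrically this says that all higher direct images of the Bott-Chern complex are concentrated along $E$ and are detected by restriction to $E$.

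The hard part will be exactly this last identification of higher direct images near the exceptional divisor. It requires understanding the Bott-Chern complex on the blow-up of a polydisk and showing that its fibrewise cohomology coincides with that of the projective fibre $\mathbb{P}^{r-1}$, so that the inclusion $E\hookrightarrow\tilde{X}$ induces an isomorphism on the relevant relative cohomology. I expect to establish this by combining the explicit structure $E=\mathbb{P}(N_{Z/X})$ with a tubular- (or formal-) neighbourhood retraction of a neighbourhood of $E$ onto $E$, the Bott-Chern projective-bundle formula in its relative, sheaf-theoretic form, and the classical computation of $R^{i}\pi_{\ast}\Omega_{\tilde{X}}^{s}$. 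Once the higher direct images of $\mathscr{B}_{\tilde{X}}^{\bullet}$ and of $\tilde{\iota}_{\ast}\mathscr{B}_{E}^{\bullet}$ are matched, the cone on $\mathscr{K}$ vanishes, yielding $R\pi_{\ast}\mathscr{K}_{\tilde{X}}^{\bullet}\cong\mathscr{K}_{X}^{\bullet}$ and hence the lemma. (Equivalently, one may phrase the same reduction through the long exact sequences of cohomology with supports along $Z$ and along $E$, where the identical local computation near $E$ is the decisive input.)
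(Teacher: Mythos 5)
Your strategy is sound and, once the deferred step is granted, it does prove the lemma; but it is organized quite differently from the paper. The paper never forms the derived comparison with the triangles $\mathscr{K}\to\mathscr{B}\to\iota_{\ast}\mathscr{B}$: instead it splits $\mathscr{K}_{X}^{\bullet}$ by the short exact sequence $0\to\mathscr{D}_{X}^{\bullet}[-1]\to\mathscr{K}_{X}^{\bullet}\to\mathcal{K}_{U}\to0$, handles the degree-zero piece by identifying $H^{\bullet}(X,\mathcal{K}_{U})$ with compactly supported de Rham cohomology $H^{\bullet}_{dR,c}(U)$ (where the biholomorphism $\pi|_{\tilde{U}}:\tilde{U}\to U$ gives the isomorphism), and treats the remaining complex $\mathscr{D}^{\bullet}$ \emph{termwise}: Lemma \ref{higher-di-img} asserts $\pi^{\ast}:\mathscr{D}_{X}^{s}\xrightarrow{\;\sim\;}\pi_{\ast}\mathscr{D}_{\tilde{X}}^{s}$ and $R^{r}\pi_{\ast}\mathscr{D}_{\tilde{X}}^{s}=0$ for $r\geq1$, resting on Proposition \ref{hi-dir-0} ($R^{r}\pi_{\ast}\mathcal{K}^{s}_{\tilde{X}}=0$), which is imported from \cite{Men18} and \cite{RYY}; a Godement/Leray argument, the spectral sequence of the associated double complex, and the five lemma then conclude. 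Your derived formulation $\mathscr{K}_{X}^{\bullet}\cong R\pi_{\ast}\mathscr{K}_{\tilde{X}}^{\bullet}$ is arguably cleaner and uniformly absorbs the $\mathcal{K}_{U}$ piece into the same comparison (correctly so: $R\pi_{\ast}\tilde{j}_{!}\mathbb{C}_{\tilde{U}}\cong j_{!}\mathbb{C}_{U}$ because $\pi^{-1}(V)$ retracts onto $E\cap\pi^{-1}(V)$). Note, however, that if you unwind your decisive input $R^{i}\pi_{\ast}\mathscr{B}_{\tilde{X}}^{\bullet}\cong\iota_{\ast}R^{i}(\pi|_{E})_{\ast}\mathscr{B}_{E}^{\bullet}$ ($i\geq1$) through the termwise spectral sequence and the sequence $0\to\mathcal{K}^{s}_{\tilde{X}}\to\Omega^{s}_{\tilde{X}}\to\tilde{\iota}_{\ast}\Omega^{s}_{E}\to0$ of \eqref{re-E}, it is literally equivalent to Proposition \ref{hi-dir-0} together with the standard identifications $\pi_{\ast}\Omega^{s}_{\tilde{X}}\cong\Omega^{s}_{X}$, $(\pi|_{E})_{\ast}\Omega^{s}_{E}\cong\Omega^{s}_{Z}$ and the surjectivity of $\Omega^{s}_{X}\to\iota_{\ast}\Omega^{s}_{Z}$ from \eqref{Z-pre-exact-seq}. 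So the mathematical crux is the same in both arguments; you have rearranged the bookkeeping around it.

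The one caveat: that crux is exactly what you leave as an expectation, and your sketch of it is thinnest where it matters. The tubular-neighbourhood retraction settles only the topological ($\mathbb{C}_{\tilde{X}}$) column; a smooth retraction pulls back nothing holomorphic, so for the columns $\Omega^{s}_{\tilde{X}}$ the genuine content is the sheaf-level computation $R^{i}\pi_{\ast}\Omega^{s}_{\tilde{X}}\xrightarrow{\;\sim\;}\iota_{\ast}R^{i}(\pi|_{E})_{\ast}\Omega^{s}_{E}$ for $i\geq1$ on the blow-up of a polydisk. The paper does not reprove this either, but it isolates it as a precise statement (Proposition \ref{hi-dir-0}) and cites \cite[Lemma 2.4]{Men18} and \cite[Lemma 4.4]{RYY}; as written, your version of this step is asserted with the right tools named, not proved.
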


From \eqref{comm-cohom-long-seq} and Lemma \ref{kersheaf-Hcoh-iso}, we derive the commutative diagram
\begin{equation}\label{final-cohom-long-seq}
\xymatrix@C=0.4cm{
  \cdots \ar[r]^{}
  & \mathbb{H}^{p+q}(X, \mathscr{K}_{X}^{\bullet})\ar[d]_{\pi^{\ast}}^{\cong} \ar[r]^{}
  & H_{BC}^{p,q}(X)\ar[d]_{\pi^{\ast}} \ar[r]^{}
  & H_{BC}^{p,q}(Z) \ar[d]_{(\pi|_{E})^{\ast}} \ar[r]^{}
  & \mathbb{H}^{p+q+1}(X, \mathscr{K}_{X}^{\bullet}) \ar[d]_{\pi^{\ast}}^{\cong}  \ar[r]^{} & \cdots \\
   \cdots \ar[r]
  & \mathbb{H}^{p+q}(\tilde{X}, \mathscr{K}_{\tilde{X}}^{\bullet})\ar[r]^{}
  & H_{BC}^{p,q}(\tilde{X}) \ar[r]^{}
  & H_{BC}^{p,q}(E) \ar[r]
  & \mathbb{H}^{p+q+1}(\tilde{X}, \mathscr{K}_{\tilde{X}}^{\bullet})\ar[r]&\cdots.}
\end{equation}
Since $\pi:\tilde{X}\rightarrow X$ is a proper and holomorphic map the induced morphism
\begin{equation}\label{inj-X}
\pi^{\ast}: H_{BC}^{p,q}(X)\rightarrow H_{BC}^{p,q}(\tilde{X})
\end{equation}
is injective (cf. \cite[Theorem 12.9]{Dem12} or \cite[Theorem 2.4.]{AK14}) and so is the morphism
\begin{equation}\label{inj-ZE}
(\pi|_{E})^{\ast}:H_{BC}^{p,q}(Z)\rightarrow H_{BC}^{p,q}(E)
\end{equation}
by the Weak Five Lemma \cite[Chapter 1, \S 3]{Mac63}.
Applying Proposition \ref{cla} to \eqref{final-cohom-long-seq}, we obtain
$$
\mathrm{coker}\, \eqref{inj-X} \cong \mathrm{coker}\,\eqref{inj-ZE},
$$
which means
\begin{equation*}
H_{BC}^{p, q}(\tilde{X})
\cong
H_{BC}^{p, q}(X)\oplus \Big(H_{BC}^{p, q}(E)/(\pi|_{E})^{\ast}H_{BC}^{p, q}(Z)\Big).
\end{equation*}
The proof of Theorem \ref{Thm1} (i) is now complete.
\end{proof}

\begin{rem}\label{Cech-cohom-rem}
Note that the hypercohomology groups of a sheaf complex are canonically isomorphic to its \v{C}ech hypercohomology groups on manifolds (cf. \cite[Theorem 1.3.13 (3)]{Bry93}).
The diagram \eqref{comm-long-seq-0} is actually equal to a diagram of
\v{C}ech hypercohomology groups whose commutativity can be also verified directly,
because all morphisms are induced by the pullback of differential forms.
\end{rem}

Finally, we turn to the proof of Lemma \ref{kersheaf-Hcoh-iso}.

For the pair $(X,Z)$,
there exist two surjective sheaf morphisms
$
\chi: \mathbb{C}_{X}\rightarrow \iota_{\ast}\mathbb{C}_{Z}$
and
$\nu: \E_{X}^{l} \rightarrow \iota_{\ast}\E_{Z}^{l}$ defined by setting
\begin{equation*}
  \Gamma(V,\mathbb{C}_{X})
  \rightarrow
  \Gamma(V,\iota_{\ast}\mathbb{C}_{Z}),\,\,\,
  f \mapsto f|_{V\cap Z},
\end{equation*}
and
\begin{equation*}
\Gamma(V, \E_{X}^{l})
\rightarrow
\Gamma(V,\iota_{\ast}\E_{Z}^{l} ),\,\,\,
\alpha \mapsto (\iota_{V\cap Z})^{\ast} \alpha,
\end{equation*}
for any open subset $V\subset X$ such that $V\cap Z \neq \emptyset$, and {\it zero map} otherwise.
Let $\K_{U}$ and $\K_{U}^{l}$ are the kernel sheaves of $\chi$ and $\nu$, respectively.
It can be directly verified that $\K_{U}^{\bullet}$ is a \emph{fine} resolution of $\K_{U}$,
and hence the sheaf cohomology of $\K_{U}$ is isomorphic to the cohomology of the complex $(\Gamma(X,\K_{U}^{\bullet}),d)$ called the \emph{relative de Rham cohomology}.
Moreover, by \cite[Proposition 13.11]{MT97}, we have
\begin{equation*}\label{rel-deRham}
H^{l}(X, \K_{U})
\cong
H_{dR}^{l}(X, Z)
\cong
H_{dR, c}^{l}(U),
\end{equation*}
where $H_{dR}^{l}(X, Z)$ is the $l$-th \emph{relative de Rham cohomology} in the sense of Godbillon 
\cite[Chapitre XII]{Go71}
and $H_{dR, c}^{l}(U)$ is the $l$-th compactly supported de Rham cohomology of $U$.

\begin{proof}[Proof of Lemma \ref{kersheaf-Hcoh-iso}]
Recall that the sheaf complex $\mathscr{K}^{\bullet}_{X}$ is defined to be
\begin{equation*}
\xymatrix@C=0.4cm{
\mathcal{K}_{U} \ar[r]^{(+,-)\;\;\;\;\;\;\;} & \mathcal{K}_{X}^{0}\oplus \bar{\mathcal{K}}_{X}^{0} \ar[r]^{} &\mathcal{K}_{X}^{1}\oplus \bar{\mathcal{K}}_{X}^{1}
  \ar[r]^{}& \cdots  \ar[r]^{}& \mathcal{K}_{X}^{p-1}\oplus \bar{\mathcal{K}}_{X}^{p-1} \ar[r]^{} & \bar{\mathcal{K}}_{X}^{p} \ar[r]^{} & \cdots  \bar{\mathcal{K}}_{X}^{q-1} \ar[r]^{} & 0.}
\end{equation*}
The sheaf $\mathcal{K}_{U}$ can be viewed as a complex concentrated in degree $0$.
Denote $\mathscr{D}^{\bullet}_{X}$ be the sheaf complex
\begin{equation*}
\xymatrix@C=0.4cm{
0\ar[r]^{} & \mathcal{K}_{X}^{0}\oplus \bar{\mathcal{K}}_{X}^{0} \ar[r]^{} &\mathcal{K}_{X}^{1}\oplus \bar{\mathcal{K}}_{X}^{1}
  \ar[r]^{}& \cdots  \ar[r]^{}& \mathcal{K}_{X}^{p-1}\oplus \bar{\mathcal{K}}_{X}^{p-1} \ar[r]^{} & \bar{\mathcal{K}}_{X}^{p} \ar[r]^{} & \cdots  \bar{\mathcal{K}}_{X}^{q-1} \ar[r]^{} & 0.}
\end{equation*}
Note that the following diagram is commutative
\begin{equation*}
\small{
\xymatrix@C=0.5cm{
   & \vdots  & \vdots  & \vdots & \\
0\ar[r] & \K_{X}^{1}\oplus \bar{\K}_{X}^{1}\ar[u] \ar[r]^{\Id} &\K_{X}^{1}\oplus \bar{\K}_{X}^{1}  \ar[u]\ar[r] & 0 \ar[u]\ar[r] &0 \\
0\ar[r] &  \K_{X}^{0}\oplus \bar{\K}_{X}^{0}  \ar[u]^{\partial\oplus \bar{\partial}} \ar[r]^{\Id} & \K_{X}^{0}\oplus \bar{\K}_{X}^{0}  \ar[u]^{\partial\oplus \bar{\partial}}\ar[r] & 0 \ar[u]\ar[r] &0 \\
0\ar[r] & 0  \ar[u] \ar[r] & \K_{U}   \ar[u]^{(+,1)}\ar[r]^{\Id} & \K_{U} \ar[u] \ar[r] &0 .\\
& 0 \ar[u]   & 0 \ar[u]  & 0\ar[u] &}}
\end{equation*}
We obtain a short exact sequence of sheaf complexes
\begin{equation*}
\xymatrix{
0 \ar[r] & \mathscr{D}_{X}^{\bullet}[-1]
\ar[r] & \mathscr{K}_{X}^{\bullet}
\ar[r] &  \mathcal{K}_{U}  \ar[r] & 0,}
\end{equation*}
and hence a long exact sequence of hypercohomologies:
\begin{equation*}
\xymatrix@C=0.4cm{
\ar[r]^{}
& H^{l-1}(X,\mathcal{K}_{U}) \ar[r]^{}
& \mathbb{H}^{l}(X,\mathscr{D}_{X}^{\bullet}[-1]) \ar[r]^{}
& \mathbb{H}^{l}(X, \mathscr{K}_{X}^{\bullet})  \ar[r]^{}
& H^{k}(X,\mathcal{K}_{U})  \ar[r]^{}
& \mathbb{H}^{l+1}(X,\mathscr{D}_{X}^{\bullet}[-1])
\ar[r]^{} &}
\end{equation*}
which equals to
\begin{equation*}
\xymatrix@C=0.4cm{
\cdots\ar[r]^{}
& H^{l-1}(X,\mathcal{K}_{U}) \ar[r]^{}
& \mathbb{H}^{l-1}(X,\mathscr{D}_{X}^{\bullet}) \ar[r]^{}
& \mathbb{H}^{l}(X, \mathscr{K}_{X}^{\bullet})  \ar[r]^{}
& H^{l}(X,\mathcal{K}_{U})  \ar[r]^{}
& \mathbb{H}^{l}(X,\mathscr{D}_{X}^{\bullet})
\ar[r]^{} &\cdots}
\end{equation*}
Similarly, for the sheaf complex $\mathscr{K}^{\bullet}_{\tilde{X}}$ with respect to the pair $(\tilde{X},E)$, there is a long exact sequence
\begin{equation*}
\xymatrix@C=0.4cm{
\cdots\ar[r]^{}
& H^{l-1}(\tilde{X},\mathcal{K}_{\tilde{U}}) \ar[r]^{}
& \mathbb{H}^{l-1}(\tilde{X},\mathscr{D}_{\tilde{X}}^{\bullet}) \ar[r]^{}
& \mathbb{H}^{l}(\tilde{X}, \mathscr{K}_{\tilde{X}}^{\bullet})  \ar[r]^{}
& H^{l}(\tilde{X},\mathcal{K}_{\tilde{U}})  \ar[r]^{}
& \mathbb{H}^{l}(\tilde{X},\mathscr{D}_{\tilde{X}}^{\bullet})
\ar[r]^{} &\cdots}
\end{equation*}
Akin to \eqref{comm-long-seq-0}, the blow-up diagram \eqref{b-l-d} induces a commutative diagram:
\begin{equation}\label{rel-long-z-e}
\xymatrix@C=0.3cm{
\cdots\ar[r]^{}
& H^{l-1}(X,\mathcal{K}_{U}) \ar[r]^{}\ar[d]^{\pi^{\ast}_{\tilde{U}}}
& \mathbb{H}^{l-1}(X,\mathscr{D}_{X}^{\bullet})
\ar[r]^{}\ar[d]^{\pi^{\ast}_{\mathscr{D}}}
& \mathbb{H}^{l}(X, \mathscr{K}_{X}^{\bullet})
\ar[r]^{}\ar[d]^{\pi^{\ast}}
& H^{l}(X,\mathcal{K}_{U})  \ar[r]^{}\ar[d]^{\pi^{\ast}_{\tilde{U}}}
& \mathbb{H}^{l}(X,\mathscr{D}_{X}^{\bullet})
\ar[r]^{}\ar[d]^{\pi^{\ast}_{\mathscr{D}}} &\cdots\\
\cdots\ar[r]^{}
& H^{l-1}(\tilde{X},\mathcal{K}_{\tilde{U}}) \ar[r]^{}
& \mathbb{H}^{l-1}(\tilde{X},\mathscr{D}_{\tilde{X}}^{\bullet}) \ar[r]^{}
& \mathbb{H}^{l}(\tilde{X}, \mathscr{K}_{\tilde{X}}^{\bullet})  \ar[r]^{}
& H^{l}(\tilde{X},\mathcal{K}_{\tilde{U}})  \ar[r]^{}
& \mathbb{H}^{l}(\tilde{X},\mathscr{D}_{\tilde{X}}^{\bullet})
\ar[r]^{} &\cdots.}
\end{equation}

On one hand, we have
$$
H^{l}(X,\mathcal{K}_{U})\cong H^{l}_{dR,c}(U)\,\,\,\mathrm{and}\,\,\, H^{l}(\tilde{X},\mathcal{K}_{\tilde{U}})\cong
H^{l}_{dR,c}(\tilde{U}),\,\,\,\mathrm{for\,\,\,any}\,\,\,
l\in\mathbb{N}.
$$
On the other hand, $\pi_{\tilde{U}}:\tilde{U}\rightarrow U$ is biholomorphic.
This follows that $\pi^{\ast}_{\tilde{U}}$ in \eqref{rel-long-z-e} is isomorphic.

Now we claim that $\pi^{\ast}_{\mathscr{D}}$ is also isomorphic.
Note that $\mathbb{H}(X,\mathscr{D}^{\bullet}_{X})$
and $\mathbb{H}(\tilde{X},\mathscr{D}^{\bullet}_{\tilde{X}})$
are isomorphic to the total cohomologies of the bounded double complexes
$(\mathbb{K}_{X}^{\bullet,\bullet}; D_{1}, D_{2})$
and
$(\mathbb{K}_{\tilde{X}}^{\bullet,\bullet}; D_{1}, D_{2})$,
respectively (see Appendix \ref{dou-com}).
According to \cite[Theorem 14.14]{BT82}, there exists a spectral sequence
$\{E_{r},d_{r}\}$ converging to the total cohomology
$H(\mathbb{K}^{\bullet}_{X})$ such that
\begin{equation*}
E^{s,t}_{1}=H^{s,t}_{D_{2}}(\mathbb{K}^{\bullet}_{X})
=
H^{t}(\mathbb{K}_{X}^{s,\bullet})
\cong
H^{t}(X,\mathscr{D}^{s}_{X}).
\end{equation*}
Likewise, the double complex $(\mathbb{K}_{\tilde{X}}^{\bullet,\bullet};\tilde{D}_{1}, \tilde{D}_{2})$ admits a
spectral sequence, denoted by
$\{\tilde{E}_{r},\tilde{d}_{r}\}$,
with the first term
\begin{equation*}
\tilde{E}^{s,t}_{1}
=H^{s,t}_{\tilde{D}_{2}}(\mathbb{K}^{\bullet}_{\tilde{X}})
=
H^{t}(\mathbb{K}_{\tilde{X}}^{s,\bullet})
\cong
H^{t}(\tilde{X},\mathscr{D}^{s}_{\tilde{X}}),
\end{equation*}
which is converging to the total cohomology
$H(\mathbb{K}^{\bullet}_{\tilde{X}})$.

Consider the Godement resolution $\mathscr{D}_{\tilde{X}}^{s}\rightarrow \mathcal{G}^{s, \bullet}$.
Then the $r$-th higher direct image is $R^{r}\pi_{\ast}\mathscr{D}_{\tilde{X}}^{s}=
\mathscr{H}^{r}(\pi_{\ast}\mathcal{G}^{s, \bullet})$.
By Lemma \ref{higher-di-img}, we have $R^{r}\pi_{\ast}\mathscr{D}_{\tilde{X}}^{s}=0$
for $r\geq1$, i.e., $\mathscr{H}^{r}(\pi_{\ast}\mathcal{G}^{s, \bullet})=0$.
Equivalently, this means that
$\pi_{\ast}\mathcal{G}^{s, \bullet}$
is an \emph{exact} sheaf complex and hence a flasque resolution of $\pi_{\ast}\mathscr{D}_{\tilde{X}}^{s}$.
As a result, by definition,
we derive the canonical isomorphisms
\begin{equation}\label{more-iso}
H^{l}(X, \pi_{\ast}\mathscr{D}_{\tilde{X}}^{s})
=
H^{l}(\Gamma(X, \pi_{\ast}\mathcal{G}^{s, \bullet}))
=
H^{l}(\Gamma(\tilde{X}, \mathcal{G}^{s, \bullet}))
=
H^{l}(\tilde{X}, \mathscr{D}_{\tilde{X}}^{s}).
\end{equation}
From Lemma \ref{higher-di-img}, we have the isomorphism
$\pi^{\ast}: \mathscr{D}_{X}^{s}\stackrel{\simeq}\longrightarrow \pi_{\ast}\mathscr{D}_{\tilde{X}}^{s}$,
which induces an isomorphism of sheaf cohomologies
\begin{equation}\label{sh-co-iso}
\pi^{\ast}: H^{l}(X, \mathscr{D}_{X}^{s}) \stackrel{\simeq}\longrightarrow H^{l}(X, \pi_{\ast}\mathscr{D}_{\tilde{X}}^{s}).
\end{equation}

Observe that the morphism of double complexes
$
\pi^{\ast}:\mathbb{K}^{\bullet,\bullet}_{X}
\rightarrow
\mathbb{K}^{\bullet,\bullet}_{\tilde{X}}
$
induces a morphism of the total cohomology groups
$\pi^{\ast}:H(\mathbb{K}_{X}^{\bullet})\rightarrow H(\mathbb{K}_{\tilde{X}}^{\bullet})$
and a morphism of spectral sequences
$
\pi^{\ast}_{r}:E_{r}\rightarrow\tilde{E}_{r}
$
for any $r\geq1$.
From \eqref{more-iso} and \eqref{sh-co-iso}, we get an isomorphism
$$
\pi^{\ast}:
H^{t}(X, \mathscr{D}_{X}^{s})
\stackrel{\simeq}\longrightarrow
H^{t}(\tilde{X}, \mathscr{D}_{\tilde{X}}^{s})
$$
for any $0\leq t\leq n$ and $0\leq s\leq q-1$.
This implies that
$\pi^{\ast}_{1}:E_{1}\rightarrow\tilde{E}_{1}$
is isomorphic.
Due to \cite[Theorem 6.4.2]{GS99},
we get $\pi^{\ast}_{r}$ is isomorphic for any $r>1$;
furthermore, the induced morphism of the total cohomologies for the simple complexes is isomorphic, i.e.,
$\pi^{\ast}_{\mathscr{D}}$ is an isomorphism.

As the morphisms $\pi^{\ast}_{\tilde{U}}$ and $\pi^{\ast}_{\mathscr{D}}$ in \eqref{rel-long-z-e} are isomorphic, from the standard Five Lemma, so is the middle one $\pi^{\ast}$.
We have thus proved the lemma.
\end{proof}


\subsection{Case of $p=0$ or $q=0$}
Due to the symmetric property we have $H^{0,q}_{BC}(-)\cong H^{q,0}_{BC}(-)$ and therefore in the following we shall prove
$H^{p,0}_{BC}(\tilde{X})\cong H^{p,0}_{BC}(X)$.
The result can be proved by the same way as used in  \cite[Proposition 1.2]{Uen73} or \cite[Proposition $4.1$ of Chapter $1$]{Popa}.

\begin{proof}[Proof of Theorem \ref{Thm1} $\mathrm{(ii)}$]
From definition, we have
$$
H^{p,0}_{BC}(\tilde{X})=\{\tilde{\alpha}\in\Gamma(\tilde{X},\Omega^{p}_{\tilde{X}})\,|\,\partial\tilde{\alpha}=0\}
$$
and
$$
H^{p,0}_{BC}(X)=\{\alpha\in\Gamma(X,\Omega^{p}_{X})\,|\,\partial\alpha=0\}.
$$
Put $U=X-Z$ and $\tilde{U}=\pi^{-1}(U)$.
Let $j:U\hookrightarrow X$ and $\tilde{j}:\tilde{U}\hookrightarrow\tilde{X}$ be the associated inclusions.
Note that $\pi$ is a proper holomorphic map.
It follows that the morphism
\begin{equation}\label{eq5.1}
\pi^{\ast}:H^{p,0}_{BC}(X)\rightarrow H^{p,0}_{BC}(\tilde{X})
\end{equation}
is injective and hence $h^{p,0}_{BC}(\tilde{X})\geq h^{p,0}_{BC}(X)$.
The next thing to do in the proof is to show that (\ref{eq5.1}) is surjective, namely, $h^{p,0}_{BC}(\tilde{X})\leq h^{p,0}_{BC}(X)$.

Since $\pi|_{\tilde{U}}:\tilde{U}\rightarrow U$ is biholomorphic we get an isomorphism
\begin{equation}\label{equ5.2}
H^{p,0}_{BC}(\tilde{U})\cong H^{p,0}_{BC}(U).
\end{equation}
The inclusion $\tilde{j}$ induces a restriction morphism
\begin{equation}\label{equ5.3}
\tilde{j}^{\ast}:H^{p,0}_{BC}(\tilde{X})\rightarrow H^{p,0}_{BC}(\tilde{U}).
\end{equation}
We claim that (\ref{equ5.3}) is injective.
If the assertion was not true, then there exists a nonzero $\partial$-closed holomorphic $p$-form on $\tilde{X}$ would vanish on the nonempty open subset $\tilde{U}=\tilde{X}-E$, and this leads to a contradiction.
Using the same argument above, we can verify the injectivity of
$$
j^{\ast}:H^{p,0}_{BC}(X)\rightarrow H^{p,0}_{BC}(U).
$$
Let $\alpha\in H^{p,0}_{BC}(U)$, which is a $\partial$-closed holomorphic $p$-form on $U$.
Note that $\textmd{codim}_{\mathbb{C}}Z\geq2$, by the Hartogs Extension Theorem $\alpha$ extends over $Z$ and we get a holomorphic $p$-form $\beta$ on $X$ such that $\beta|_{U}=\alpha$.
Let $\zeta=\partial\beta$, then $\zeta\in\Gamma(X,\Omega^{p+1}_{X})$.
Observe that the restriction map
$$
j^{\ast}:H^{p+1,0}_{BC}(X)\rightarrow H^{p+1,0}_{BC}(U)
$$
is injective and
$\zeta|_{U}=\partial\alpha=0$ we get $\zeta=0$.
This means $\beta\in H^{p,0}_{BC}(X)$ and therefore
$
j^{\ast}:H^{p,0}_{BC}(X)\rightarrow H^{p,0}_{BC}(U)
$
is surjective;
moreover, we get
\begin{equation}\label{equ5.4}
H^{p,0}_{BC}(X)\cong H^{p,0}_{BC}(U).
\end{equation}
Combining (\ref{equ5.2}), \eqref{equ5.3} and (\ref{equ5.4}) we obtain
$
h^{p,0}_{BC}(\tilde{X})\leq h^{p,0}_{BC}(X).
$
\end{proof}
Due to the Weak Factorization Theorem (see Section \ref{invar-ddbar}, Theorem \ref{WFT}),
if an invariant of compact complex manifolds is stable under the blow-ups then it is a bimeromorphic invariant.
As a direct result of Theorem \ref{Thm1} (ii), we get the following result.
\begin{cor}\label{p-0-inv}
If $X$ and $Y$ are two bimeromorphically equivalent compact complex manifolds with the complex dimension $n$,
then for any $0\leq p\leq n$ we have
\begin{equation*}
h_{BC}^{p, 0}(X)=h_{BC}^{p, 0}(Y).
\end{equation*}
\end{cor}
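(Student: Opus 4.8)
The plan is to reduce the statement to the invariance of $h^{p,0}_{BC}$ under a single blow-up, which is exactly the content of Theorem \ref{Thm1} (ii), and then to propagate this equality along the factorization furnished by the Weak Factorization Theorem. Since the passage preceding the corollary already records that an invariant stable under blow-ups is automatically a bimeromorphic invariant, the proof amounts to making this principle explicit for $h^{p,0}_{BC}$.

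First I would invoke the Weak Factorization Theorem (Theorem \ref{WFT}): as $X$ and $Y$ are bimeromorphically equivalent compact complex manifolds of the same dimension $n$, the bimeromorphic map between them decomposes into a finite chain of compact complex manifolds
$$
X=X_0 \dashrightarrow X_1 \dashrightarrow \cdots \dashrightarrow X_N=Y,
$$
in which, for each $i$, there is a blow-up $\pi_i$ with smooth center of codimension $r\geq 2$ exhibiting one of $X_i, X_{i+1}$ as the blow-up of the other. In particular every intermediate $X_i$ is again a compact complex manifold of dimension $n$, since blow-ups and blow-downs with smooth centers preserve compactness and dimension.

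Next I would compare the $(p,0)$-Bott-Chern numbers across each arrow. Whether the arrow $X_i\dashrightarrow X_{i+1}$ is realized by a blow-up $\pi_i\colon X_{i+1}\to X_i$ or by a blow-down $\pi_i\colon X_i\to X_{i+1}$, Theorem \ref{Thm1} (ii) applies to the blow-up $\pi_i$ and delivers an isomorphism $H^{p,0}_{BC}(X_i)\cong H^{p,0}_{BC}(X_{i+1})$; hence $h^{p,0}_{BC}(X_i)=h^{p,0}_{BC}(X_{i+1})$ for every $i$. An immediate induction on $i$ then gives $h^{p,0}_{BC}(X)=h^{p,0}_{BC}(Y)$, which is the assertion.

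As for the difficulty, essentially all the substantive work has already been done in the proof of Theorem \ref{Thm1} (ii); the present argument is purely formal. The only points meriting a line of verification are that each center supplied by the Weak Factorization Theorem has codimension at least two, so that Theorem \ref{Thm1} is applicable, and that $h^{p,0}_{BC}$ is a biholomorphic invariant, so that the indexing of the chain is harmless. I therefore expect no genuine obstacle: the corollary is a direct consequence of the blow-up invariance of Theorem \ref{Thm1} (ii) combined with the Weak Factorization Theorem.
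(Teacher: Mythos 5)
Your proposal is correct and takes essentially the same route as the paper: the paper likewise deduces the corollary directly from Theorem \ref{Thm1} (ii) together with the Weak Factorization Theorem \ref{WFT}, noting that any invariant stable under blow-ups is automatically bimeromorphic. Your explicit chain $X=X_0 \dashrightarrow \cdots \dashrightarrow X_N=Y$ with the induction over each blow-up or blow-down merely spells out in detail what the paper asserts in a single line.
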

\subsection{Pointed blow-up}

If $Z=\{\textmd{pt}\}$ is a single-pointed space, then we have
\begin{cor}\label{cor3.4}
\begin{equation*}
H_{BC}^{p,q}(\tilde{X}) = \left\lbrace
           \begin{array}{c l}
             H_{BC}^{p,q}(X)\oplus\mathbb{C}, 
             & \text{$1\leq p=q\leq n-1$};\\
             H_{BC}^{p,q}(X),& \mathrm{otherwise}.
           \end{array}
         \right.
\end{equation*}
\end{cor}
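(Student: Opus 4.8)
The plan is to specialize Theorem~\ref{Thm1} to the case $Z=\{\mathrm{pt}\}$, so that $r=\mathrm{codim}_{\mathbb{C}}Z=n$ and the exceptional divisor $E=\pi^{-1}(Z)=\mathbb{P}(T_{\mathrm{pt}}X)$ is biholomorphic to the projective space $\mathbb{C}\mathrm{P}^{n-1}$, of complex dimension $n-1$. The only genuine input beyond Theorem~\ref{Thm1} is the Bott--Chern cohomology of the two centers $Z$ and $E$, which I would record first. Since $Z$ is a single point, $H_{BC}^{p,q}(Z)\cong\mathbb{C}$ for $(p,q)=(0,0)$ and vanishes otherwise. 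Since $\mathbb{C}\mathrm{P}^{n-1}$ is compact K\"ahler, it is a $\partial\bar{\partial}$-manifold, so all the maps in \eqref{complex-cohom-diagram} are isomorphisms and its Bott--Chern cohomology agrees with its Hodge decomposition; hence $H_{BC}^{p,q}(E)\cong\mathbb{C}$ when $p=q$ with $0\le p\le n-1$, and $H_{BC}^{p,q}(E)=0$ otherwise.

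Next I would feed these computations into Theorem~\ref{Thm1}. For bidegrees with $p=0$ or $q=0$, part~(ii) gives $H_{BC}^{p,q}(\tilde{X})\cong H_{BC}^{p,q}(X)$ directly, so no new summand appears (in particular at $(0,0)$). For $1\le p,q\le n$ I would invoke part~(i). Because $p,q\ge 1$ forces $(p,q)\neq(0,0)$, the term $(\pi|_{E})^{\ast}H_{BC}^{p,q}(Z)$ vanishes, and the quotient collapses to $H_{BC}^{p,q}(E)$ itself. Thus in this range
\begin{equation*}
H_{BC}^{p,q}(\tilde{X})\cong H_{BC}^{p,q}(X)\oplus H_{BC}^{p,q}(E).
\end{equation*}

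Finally I would assemble the cases using the computation of $H_{BC}^{p,q}(E)$. The extra summand is nonzero precisely when $H_{BC}^{p,q}(E)\cong\mathbb{C}$ with $1\le p,q\le n$, i.e.\ when $p=q$ and $1\le p\le n-1$; in that case $H_{BC}^{p,q}(\tilde{X})\cong H_{BC}^{p,q}(X)\oplus\mathbb{C}$. In every other bidegree the summand is zero and $H_{BC}^{p,q}(\tilde{X})\cong H_{BC}^{p,q}(X)$. The one point deserving care is the top degree $p=q=n$: here part~(i) still applies, but since $\dim_{\mathbb{C}}E=n-1$ we have $H_{BC}^{n,n}(E)=0$, so no summand is added---this is exactly why the range in the statement cuts off at $n-1$ rather than $n$. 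There is no substantive obstacle beyond Theorem~\ref{Thm1} itself; the work is entirely the bookkeeping of the vanishing ranges of $H_{BC}^{\bullet,\bullet}(\mathbb{C}\mathrm{P}^{n-1})$ and of $H_{BC}^{\bullet,\bullet}(\mathrm{pt})$, together with the observation that the pullback term from the point contributes only in bidegree $(0,0)$, which is handled separately by part~(ii).
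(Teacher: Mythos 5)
Your proposal is correct and follows essentially the same route as the paper: specialize Theorem~\ref{Thm1} with $E\cong\mathbb{C}\mathrm{P}^{n-1}$, identify $H^{\bullet,\bullet}_{BC}(E)$ with the Dolbeault/Hodge groups of projective space, note $H^{p,q}_{BC}(\mathrm{pt})$ is nonzero only at $(0,0)$, and read off the cokernel. Your explicit handling of the $p=0$ or $q=0$ bidegrees via part~(ii) and of the top degree $(n,n)$ (where $\dim_{\mathbb{C}}E=n-1$ forces vanishing) is slightly more careful bookkeeping than the paper's terse version, but it is the same argument.
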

\begin{proof}
As $Z=\{\textmd{pt}\}$ the exceptional divisor $E$ is biholomorphic to $\mathbb{C}\mathrm{P}^{n-1}$.
Note that
$H_{BC}^{p,q}(E)\cong H^{p,q}_{\bar{\partial}}(\mathbb{C}\mathrm{P}^{n-1})$,
we have $H^{0,0}_{BC}(\textmd{pt})=\mathbb{C}$ and  $H_{BC}^{p,q}(\textmd{pt})=0$ for other cases.
Observe that the de Rham cohomology ring of $\mathbb{C}\mathrm{P}^{n-1}$ is
$$
H^{\ast}_{dR}(\mathbb{C}\mathrm{P}^{n-1};\mathbb{C})
=\mathbb{C}[x]/(x^{n}).
$$
Because of the Hodge decomposition, we get that the cokernel of the morphism
$$
(\pi|_{E})^{\ast}:H_{\bar{\partial}}^{p,q}(\textmd{pt})\rightarrow H_{\bar{\partial}}^{p,q}(E)
$$
is zero when $(p,q)=(0,0)$ and isomorphic to $H^{p,q}_{\bar{\partial}}(\mathbb{C}\mathrm{P}^{n-1})$ when $(p,q)\neq(0,0)$.
This completes the proof.
\end{proof}
As an application of Corollary \ref{cor3.4}, we can prove the following result.
\begin{prop}
Assume that $X$ is a $\partial\bar{\partial}$-manifold of $\mathrm{dim}_{\mathbb{C}}X=n$.
Let $\pi:\tilde{X}\rightarrow X$ be the blow-up of $X$ at a point.
Then $\tilde{X}$ is a $\partial\bar{\partial}$-manifold.
\end{prop}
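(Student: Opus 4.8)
The plan is to reduce the statement to a numerical identity and then read it off from the pointed blow-up formula. Since $\tilde X$ is again a compact complex manifold with $\dim_{\mathbb{C}}\tilde X=n$, by Theorem \ref{thm2.4} (equivalently, condition (ii) recalled in Section \ref{Prelim}) the manifold $\tilde X$ satisfies the $\partial\bar\partial$-Lemma if and only if its revised non-K\"ahlerness degrees vanish, i.e.
\[
N^{k}(\tilde X)=h^{k}_{BC}(\tilde X)-h^{2n-k}_{BC}(\tilde X)=0
\qquad\text{for every }k\in\{0,1,\dots,2n\}.
\]
Here the second equality is the defining identity of $N^{k}$, which rests on the Hodge-$\ast$ duality \eqref{equ2.2}.

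First I would convert the bidegree-wise description in Corollary \ref{cor3.4} into a formula for the total Bott-Chern numbers. Summing $h^{p,q}_{BC}(\tilde X)$ over $p+q=k$, the only contribution beyond $h^{k}_{BC}(X)$ comes from the diagonal entries $p=q$ with $1\le p\le n-1$, and such an entry occurs exactly when $k$ is even with $2\le k\le 2n-2$. Thus
\[
h^{k}_{BC}(\tilde X)=h^{k}_{BC}(X)+\varepsilon_{k},\qquad
\varepsilon_{k}=
\begin{cases}
1,& k\text{ even and }2\le k\le 2n-2,\\
0,& \text{otherwise.}
\end{cases}
\]

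The decisive point is that $\varepsilon_{k}$ is invariant under $k\mapsto 2n-k$: the parities of $k$ and $2n-k$ agree, and $2\le k\le 2n-2$ holds if and only if $2\le 2n-k\le 2n-2$. Hence the extra diagonal classes are created in symmetric pairs, and
\[
N^{k}(\tilde X)=\bigl(h^{k}_{BC}(X)+\varepsilon_{k}\bigr)-\bigl(h^{2n-k}_{BC}(X)+\varepsilon_{2n-k}\bigr)=N^{k}(X).
\]
Because $X$ is a $\partial\bar\partial$-manifold, Theorem \ref{thm2.4} gives $N^{k}(X)=0$ for all $k$, so $N^{k}(\tilde X)=0$ for all $k$, and applying the converse direction of Theorem \ref{thm2.4} to $\tilde X$ completes the argument.

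No genuine obstacle remains once Corollary \ref{cor3.4} is in hand; the work is essentially bookkeeping. The only steps requiring care are checking that the new diagonal classes appear precisely in the symmetric degree pairs $(k,2n-k)$ so that they cancel in $N^{k}(\tilde X)$, and that the boundary degrees $k\in\{0,2n\}$ together with all odd degrees acquire no new class. Working instead with the ordinary degrees $\Delta^{k}$ would yield the same conclusion, but it would force in the classical de Rham blow-up relation $b_{2j}(\tilde X)=b_{2j}(X)+1$ for $1\le j\le n-1$; routing the proof through $N^{k}$ keeps everything inside Bott-Chern cohomology and makes the cancellation transparent.
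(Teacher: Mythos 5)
Your proof is correct and takes essentially the same route as the paper: both derive $N^{k}(\tilde{X})=N^{k}(X)$ from Corollary \ref{cor3.4} and then conclude via the characterization of the $\partial\bar{\partial}$-Lemma by the vanishing of the revised non-K\"ahlerness degrees (Theorem \ref{thm2.4}). Your bookkeeping with $\varepsilon_{k}$ is just a repackaging of the paper's term-by-term cancellation, which pairs each diagonal contribution at bidegree $(p,p)$ with the one at $(n-p,n-p)$ inside the sum over $p+q=k$.
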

\begin{proof}
According to Corollary \ref{cor3.4}, we get
\begin{equation}\label{equ3.14}
h_{BC}^{p,q}(\tilde{X}) = \left\lbrace
           \begin{array}{c l}
             h_{BC}^{p,q}(X)+1, & \text{if $1\leq p=q\leq n-1$};\\
             h_{BC}^{p,q}(X),& \text{otherwise}.
           \end{array}
         \right.
\end{equation}
For any $0\leq k\leq2n$, using (\ref{equ3.14}), the $k$-th revised non-K\"{a}hlerness degree of $\tilde{X}$ is
\begin{eqnarray*}
  N^{k}(\tilde{X})
   &=& h^{k}_{BC}(\tilde{X})-h^{2n-k}_{BC}(\tilde{X})\\
   &=&\sum_{p+q=k}\bigl(h_{BC}^{p,q}(\tilde{X})-h^{n-p,n-q}_{BC}(\tilde{X})\bigr) \\
   &=& \sum_{p+q=k}\bigl(h_{BC}^{p,q}(X)-h^{n-p,n-q}_{BC}(X)\bigr) \\
   &=& h^{k}_{BC}(X)-h^{2n-k}_{BC}(X)\\
   &=& N^{k}(X).
\end{eqnarray*}
Because $X$ is a $\partial\bar{\partial}$-manifold we have $N^{k}(X)=0$, and hence $N^{k}(\tilde{X})=0$.
Thus we arrive at the conclusion.
\end{proof}
\begin{rem}
Compare also \cite[Example 2.3]{ASTT17} for another different proof of the stability of the $\partial\bar{\partial}$-Lemma under the pointed blow-ups.
In particular, in \cite{ASTT17} the authors consider the orbifold case \cite[Theorem 3.1]{ASTT17} and construct some new $\partial\bar{\partial}$-manifolds \cite[Example 3.2]{ASTT17}.
\end{rem}
\begin{rem}
By the duality \eqref{equ2.2} the blow-up formula in Theorem \ref{Thm1} is equivalent to
$$
H^{n-p,n-q}_{A}(\tilde{X})\cong
H^{n-p,n-q}_{A}(X)\bigoplus\bigl(H^{n-p,n-q}_{A}(E)/
(\pi|_{E})^{\ast}H^{n-p,n-q}_{A}(Z)\bigr)
$$
and this gives rise to a blow-up formula of Aeppli cohomolgy groups.
In particular, Alessandrini-Bassanelli \cite{AB95} proved the following formula
$$
H^{1,1}_{A}(\tilde{X})=\pi^{\ast}H^{1,1}_{A}(X)\bigoplus \pi^{\ast}\mathcal{H}(Z)\langle[E]\rangle,
$$
where $\mathcal{H}$ is the sheaf of germs of pluriharmonic functions.
\end{rem}
\section{Non-K\"{a}hlerness degrees of threefolds}\label{invar-ddbar}
In this section we study the bimeromorphic invariance of non-K\"{a}hlerness degrees of compact complex threefolds.

\begin{defn}
Let $X$ be a compact complex space with $\mathrm{dim}_{\mathbb{C}}X=n$.
A \emph{modification} of $X$ is a proper holomorphic map $f:\tilde{X}\rightarrow X$ satisfying:
\begin{itemize}
  \item [(i)] $\tilde{X}$ is a compact complex space with complex dimension $n$;
  \item [(ii)] there exists an analytic subset $S\subset X$ of codimension $\geq2$ such that
              $$
              f:\tilde{X}-E\stackrel{\simeq}{\longrightarrow}X-S
              $$
              is a biholomorphism, where $E:=f^{-1}(S)$ is called the \emph{exceptional set} of the modification;
\end{itemize}
and then clearly $f(\tilde{X})=X$.
\end{defn}

It is important to notice that the blow-up of a compact complex manifold with a smooth center is a special example of modifications;
meanwhile, the modifications are bimeromorphic maps in the following sense.

\begin{defn}
A {\it meromorphic map} $f:X \dashrightarrow Y$ of compact complex spaces is a map $f$ of $X$ to the set of subsets of $Y$ satisfying the following conditions:
\begin{enumerate}
\item[(i)] The graph $G_f:=\{(x, y) \in X\times Y \mid y\in f(x)\}$ is an analytic subset of $X\times Y$;
\item[(ii)] The projection $p_1:G_f \rightarrow X$ is a modification.
\end{enumerate}
Moreover, if the projection $p_2:G_f \rightarrow Y$ is also a modification, then $f:X \dashrightarrow Y$ is called a {\it bimeromorphic map}. We say $X$ and $Y$ are {\it bimeromorphically equivalent} if there exists a bimeromorphic map $f:X \dashrightarrow Y$ between them.
\end{defn}

Suppose now that $f:X \dashrightarrow Y$ is a bimeromorphic map of compact complex manifolds.
By Hironaka's singularity resolution theorem \cite{Hir64},
there exists a compact complex manifold $Z$ with a triangle
$$
\xymatrix{
&Z \ar[ld]_{\pi_1} \ar[rd]^{\pi_2}&\\
X&& Y,
}
$$
where $\rho:Z\rightarrow G_{f}$ is a resolution of the graph $G_{f}$,
and $\pi_i:= p_i\circ \rho $ is a modification.
There exist many examples of bimeromorphic maps which are not modifications and blow-ups; however, Abramovich-Karu-Matsuki-W{\l}odarczyk \cite{AKMW02} showed that any bimeromorphic map between compact complex manifolds
is a composition of finite sequences of blow-ups and blow-downs of compact complex manifolds with smooth centers (see also \cite{Wlo03}).

\begin{thm}[{Weak Factorization Theorem, \cite[Theorem 0.3.1]{AKMW02}}]\label{WFT}
Let $f: \tilde{X} \dashrightarrow X$ be a bimeromorphic map of compact complex manifolds.
Then there exists a diagram
\begin{equation*}
\tilde{X}=Y_0 \mathop{\dashrightarrow}^{\phi_1}  Y_1   \mathop{\dashrightarrow}^{\phi_2} \cdots  \mathop{\dashrightarrow}^{\phi_{l-1}} Y_{l-1}   \mathop{\dashrightarrow}^{\phi_{l}} Y_l=X,
\end{equation*}
where $Y_i$ is a compact complex manifold, $f= \phi_{l} \circ \phi_{l-1} \circ  \cdots \circ \phi_{1}$, and either $\phi_{i}: Y_{i-1}\dashrightarrow Y_i$ or $\phi_{i}^{-1}: Y_{i}\dashrightarrow Y_{i-1}$ is a morphism obtained by blowing up a smooth center.
\end{thm}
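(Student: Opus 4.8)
The plan is to reduce the factorization of an arbitrary bimeromorphic map to the factorization of a single bimeromorphic \emph{morphism}, and then to realize the latter through the Morse-theoretic analysis of a suitable $\mathbb{C}^{\ast}$-cobordism, following the strategy of W{\l}odarczyk \cite{Wlo03} and Abramovich--Karu--Matsuki--W{\l}odarczyk \cite{AKMW02}. First I would apply Hironaka's resolution theorem (in the analytic category) to the graph $G_f \subset \tilde{X} \times X$, obtaining a smooth compact complex manifold $Z$ together with proper bimeromorphic morphisms $\pi_1 \colon Z \to \tilde{X}$ and $\pi_2 \colon Z \to X$ with $f = \pi_2 \circ \pi_1^{-1}$. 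Since a factorization of $\pi_1$ and of $\pi_2$ into smooth blow-ups and blow-downs can be concatenated (reversing the sequence obtained for $\pi_1$), it suffices to factor a single bimeromorphic morphism $\phi \colon X_+ \to X_-$ of compact complex manifolds.

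Next I would construct a \emph{birational cobordism} $B$ for $\phi$: a normal complex space carrying a $\mathbb{C}^{\ast}$-action, with distinguished fixed-point components --- a source $F_-$ and a sink $F_+$ --- such that the geometric quotients of the open loci $B \setminus F_{\pm}$ recover $X_{\pm}$ and induce the given map $X_+ \dashrightarrow X_-$. The cobordism is built from a deformation-to-the-normal-cone type space attached to the graph of $\phi$, after which equivariant resolution makes $B$ smooth with a well-behaved action. The point of this reformulation is that, as one varies the linearization of the $\mathbb{C}^{\ast}$-action along a moment-map-like parameter, the quotient changes only when the parameter crosses a \emph{critical value} associated to an intermediate fixed component $F$; between consecutive critical values the quotient is constant.

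The heart of the argument is the local analysis at each critical value. By the Bia{\l}ynicki-Birula decomposition, near a connected fixed component $F$ the cobordism $B$ is $\mathbb{C}^{\ast}$-equivariantly modelled on the total space of a representation, so crossing $F$ alters the quotient by an \emph{elementary transformation}. To identify this transformation with a smooth blow-up followed by a smooth blow-down, I would pass to the toroidal/toric local model and invoke the combinatorial factorization of toric bimeromorphic maps --- Morelli's $\pi$-desingularization of cobordisms together with the weak Oda statement --- which asserts that two smooth fans with the same support are joined by a sequence of smooth star subdivisions and their inverses, i.e. blow-ups and blow-downs along smooth centers. Functorial equivariant resolution of singularities, applied throughout, guarantees that the centers really are smooth and that no weighted or orbifold phenomena survive. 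Chaining the elementary transformations across all critical values of the cobordism then yields the factorization of $X_+ \dashrightarrow X_-$, and hence of $f$.

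The main obstacle, and the genuinely deep input, is precisely this local step: arranging the cobordism and its action so that each quotient wall-crossing is an honest smooth blow-up and blow-down rather than a weighted blow-up or a transformation creating quotient singularities. This is where Morelli's combinatorial $\pi$-desingularization and the functoriality of equivariant resolution are indispensable, and it is the reason the theorem lies substantially deeper than the other results of this paper. A secondary point requiring care is that the statement is posed in the compact complex (not necessarily algebraic) category, so at every stage --- resolution of the graph, resolution of $B$, and equivariant resolution at the critical loci --- one must invoke the \emph{analytic} versions of Hironaka's theorems rather than their algebraic counterparts; this is exactly the refinement carried out in \cite{AKMW02}.
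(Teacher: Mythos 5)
The paper does not prove Theorem \ref{WFT} at all: it is imported as a black box, quoted verbatim from Abramovich--Karu--Matsuki--W{\l}odarczyk \cite[Theorem 0.3.1]{AKMW02}, and everything in Section \ref{invar-ddbar} merely applies it. So there is no internal proof to compare your attempt against; the honest comparison is with the original proof in \cite{AKMW02} and \cite{Wlo03}, and on that score your outline is faithful. You correctly identify the reduction (resolve the graph $G_f$ by Hironaka to replace the bimeromorphic map by two bimeromorphic morphisms, factor each, and concatenate one sequence reversed), the central construction (W{\l}odarczyk's birational cobordism with its $\mathbb{C}^{\ast}$-action and the wall-crossing analysis of quotients between critical values), the local model via the Bia{\l}ynicki-Birula decomposition, and the genuinely hard step: Morelli's $\pi$-desingularization plus torification/canonical equivariant resolution, needed precisely so that each wall-crossing is an honest smooth blow-up followed by a smooth blow-down rather than a weighted or quotient-singular transformation. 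Your closing caveat about the analytic category is also the right one --- the paper's setting is compact complex manifolds, and \cite{AKMW02} obtains this case by verifying that the ingredients (analytic Hironaka, functorial/equivariant resolution, and the locally toric analysis) carry over, the functoriality being what makes the local constructions glue. Be aware that what you have written is a roadmap rather than a self-contained argument --- in particular the construction of $B$, the existence and comparison of the $\mathbb{C}^{\ast}$-quotients, and the gluing of the local toric factorizations each occupy substantial portions of \cite{AKMW02} --- but as a blind reconstruction of the cited proof's architecture it is accurate, and it correctly flags why this theorem lies much deeper than anything proved in the present paper.
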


Due to the Theorem \ref{AT13} and Theroem \ref{WFT},
to verify the bimeromorphic invariance of the $\partial\bar{\partial}$-Lemma for threefolds,
it suffices to prove an explicit blow-up formula of Bott-Chern cohomology as Corollary \ref{cor3.4}.

\begin{lem}\label{lem4.2}
Let $X$ be a compact complex threefold and $C\subset X$ a smooth curve.
Then for any $0\leq p, q\leq3$ we have
$$
H_{BC}^{p,q}(\tilde{X}) \cong  H_{BC}^{p,q}(X)\bigoplus H^{p-1,q-1}_{\bar{\partial}}(C),
$$
where $\pi:\tilde{X}\rightarrow X$ is the blow-up of $X$ along $C$.
\end{lem}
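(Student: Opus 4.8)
The plan is to specialize the general blow-up formula in Theorem~\ref{Thm1}~(i) to the case where $X$ is a threefold ($n=3$) and the center is a smooth curve $C$, so $Z=C$ has dimension $1$ and codimension $r=2\geq 2$. Theorem~\ref{Thm1}~(i) gives, for $1\leq p,q\leq 3$, the isomorphism
\begin{equation*}
H_{BC}^{p,q}(\tilde{X})\cong H_{BC}^{p,q}(X)\bigoplus\Bigl(H_{BC}^{p,q}(E)/(\pi|_{E})^{\ast}H_{BC}^{p,q}(C)\Bigr),
\end{equation*}
while Theorem~\ref{Thm1}~(ii) handles $p=0$ or $q=0$, where the summand is trivial and the asserted term $H^{p-1,q-1}_{\bar{\partial}}(C)$ vanishes (a negative index). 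So the whole task reduces to computing the cokernel of $(\pi|_E)^\ast$ and identifying it with $H^{p-1,q-1}_{\bar{\partial}}(C)$.

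First I would analyze the exceptional divisor $E=\pi^{-1}(C)$. Since $C$ is a smooth curve and $\pi|_E\colon E\to C$ is the projectivization of the rank-$2$ normal bundle $N_{C/X}$, the fiber over each point is $\mathbb{C}\mathrm{P}^{1}$, so $E$ is a $\mathbb{C}\mathrm{P}^{1}$-bundle over $C$, in particular a compact complex surface. The key structural input is that $E$ is a $\partial\bar\partial$-manifold: it is a projective bundle over a curve, hence bimeromorphic to a K\"ahler surface, and for surfaces the $\partial\bar\partial$-Lemma is a bimeromorphic invariant (or one invokes that projectivizations of bundles over K\"ahler bases are K\"ahler; a curve is trivially K\"ahler). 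Consequently the natural map $H_{BC}^{p,q}(E)\to H^{p,q}_{\bar\partial}(E)$ is an isomorphism, and I may compute everything in Dolbeault cohomology. The same reasoning, or simply the fact that $C$ is a compact Riemann surface, identifies $H^{p,q}_{BC}(C)\cong H^{p,q}_{\bar\partial}(C)$.

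Next I would compute $H^{p,q}_{\bar\partial}(E)$ via the Leray--Hirsch / projective-bundle formula for the $\mathbb{C}\mathrm{P}^{1}$-bundle $\pi|_E\colon E\to C$. Letting $\xi\in H^{1,1}_{\bar\partial}(E)$ be the first Chern class of the tautological line bundle, the Dolbeault cohomology of $E$ is a free module over $H^{\bullet,\bullet}_{\bar\partial}(C)$ on the classes $1$ and $\xi$; concretely
\begin{equation*}
H_{\bar\partial}^{p,q}(E)\cong(\pi|_{E})^{\ast}H_{\bar\partial}^{p,q}(C)\ \oplus\ \xi\cdot(\pi|_{E})^{\ast}H_{\bar\partial}^{p-1,q-1}(C).
\end{equation*}
The image of $(\pi|_E)^\ast$ is precisely the first summand, so the cokernel is $\xi\cdot(\pi|_E)^\ast H^{p-1,q-1}_{\bar\partial}(C)\cong H^{p-1,q-1}_{\bar\partial}(C)$, which is exactly the claimed term. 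Feeding this back into Theorem~\ref{Thm1}~(i) yields the formula for $1\leq p,q\leq 3$, and the $p=0$ or $q=0$ cases follow from part~(ii) as noted above.

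The main obstacle is justifying that the quotient $H^{p,q}_{BC}(E)/(\pi|_E)^\ast H^{p,q}_{BC}(C)$ equals $H^{p-1,q-1}_{\bar\partial}(C)$ rather than merely being abstractly isomorphic to a cokernel: this requires (a) the bidegree-by-bidegree projective-bundle decomposition in \emph{Bott-Chern} cohomology, which is why I must first establish that $E$ satisfies the $\partial\bar\partial$-Lemma so that Bott-Chern and Dolbeault cohomology agree and the Leray--Hirsch computation transfers cleanly, and (b) a careful check that the generator $\xi$ has pure bidegree $(1,1)$ so that multiplication by $\xi$ shifts bidegree by $(1,1)$, producing $H^{p-1,q-1}_{\bar\partial}(C)$ and not some other shift. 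I would verify the bidegree bookkeeping explicitly in the low-dimensional range $0\leq p,q\leq 3$, noting in particular that the summand is automatically zero whenever $p=0$, $q=0$, or $p-1$ (resp. $q-1$) exceeds $\dim C=1$, which is consistent with both the general formula and the surface-level cohomology of the $\mathbb{C}\mathrm{P}^1$-bundle $E$.
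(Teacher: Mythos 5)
Your proposal is correct and follows essentially the same route as the paper: specialize Theorem \ref{Thm1} to $Z=C$, use that the curve $C$ and the ruled surface $E=\mathbb{P}(N_{C/X})$ are K\"ahler (hence $\partial\bar{\partial}$-manifolds) to identify Bott--Chern with Dolbeault cohomology, and compute the cokernel of $(\pi|_{E})^{\ast}$ via Leray--Hirsch with the $(1,1)$-generator coming from the tautological bundle. The only cosmetic difference is that you invoke the bigraded projective-bundle decomposition of $H^{\bullet,\bullet}_{\bar{\partial}}(E)$ directly, while the paper derives it from the de Rham Leray--Hirsch formula together with the Hodge decomposition and a degree check; you are also slightly more explicit about the boundary cases $p=0$ or $q=0$, which the paper absorbs into Theorem \ref{Thm1}~(ii).
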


\begin{proof}
According to Theorem \ref{Thm1},
the assertion holds if and only if the co-kernel of the morphism
$$
(\pi|_{E})^{\ast}:H_{BC}^{p,q}(C)\rightarrow H_{BC}^{p,q}(E)
$$
equals to
$
H^{p-1,q-1}_{\bar{\partial}}(C),
$
for any $0\leq p, q \leq3$.
By definition, we get that the exceptional divisor $E=\pi^{-1}(C)$ is biholomorphic to the projectivization of the normal bundle of $C$, i.e.,
$E=\mathbb{P}(N_{C/X})$.
Since every compact complex curve is K\"{a}hler the $\partial\bar{\partial}$-Lemma holds on $C$.
It follows that the Bott-Chern cohomology of $C$ is isomorphic to its Dolbeault cohomology.
In addition, note that the exceptional divisor $E=\mathbb{P}(N_{C/X})$ is a K\"{a}hler surface in $\tilde{X}$.
This implies
$
H_{BC}^{p,q}(E)\cong H^{p,q}_{\bar{\partial}}(E).
$
Let $S$ be the universal subbundle over $E$ and let $t=c_{1}(S^{\ast})$ be the first Chern class of the dual bundle $S^{\ast}$,
which is a real de Rham cohomology class of $(1,1)$-type.
According to the Leray-Hirsch theorem \cite[Theorem 5.11]{BT82}, the de Rham cohomology $H^{\ast}_{dR}(E;\mathbb{C})$ is a free module over $H^{\ast}_{dR}(C;\mathbb{C})$ with the basis $\{1,t\}$, i.e.,
$$
H^{\ast}_{dR}(E;\mathbb{C})\cong H^{\ast}_{dR}(C;\mathbb{C})\otimes\{1,t\}.
$$
More precisely, we have
\begin{equation*}
H^{k}_{dR}(E;\mathbb{C})=\pi^{\ast}H^{k}_{dR}(C;\mathbb{C})\oplus t\wedge\pi^{\ast}H^{k-2}_{dR}(C;\mathbb{C}),
\end{equation*}
where $k=p+q$.
It follows that the co-kernel of the morphism
$$
(\pi|_{E})^{\ast}:H^{k}_{dR}(C;\mathbb{C})\rightarrow H^{k}_{dR}(E;\mathbb{C})
$$
is isomorphic to
$
H^{k-2}_{dR}(C;\mathbb{C}).
$
Due to the Hodge decomposition and via a careful degree checking we get
$$
H_{\bar{\partial}}^{p,q}(E)/(\pi|_{E})^{\ast}H_{\bar{\partial}}^{p,q}(C)\cong H^{p-1,q-1}_{\bar{\partial}}(C).
$$
and hence
$$
H_{BC}^{p,q}(E)/(\pi|_{E})^{\ast}H_{BC}^{p,q}(C)\cong H^{p-1,q-1}_{\bar{\partial}}(C).
$$
which completes the proof.
\end{proof}

\begin{rem}
In general, if the submanifold $Z$ is in the \emph{class $\mathcal{C}$ of Fujiki},
then the exceptional divisor $E$ is also in the class $\mathcal{C}$ of Fujiki.
Consequently, the $\partial\bar{\partial}$-Lemma holds on $E$ and hence the Bott-Chern cohomology $H^{\bullet,\bullet}_{BC}(E)$ is isomorphic to the Dolbeaut cohomology $H^{\bullet,\bullet}_{\bar{\partial}}(E)$.
According to the Hodge decomposition and the projective bundle formula for de Rham cohomology, following the steps in Lemma \ref{lem4.2}, we can prove the following formula
$$
H_{BC}^{p,q}(\tilde{X}) \cong  H_{BC}^{p,q}(X)\oplus \Big(\bigoplus_{i=1}^{r-1} H^{p-i,q-i}_{\bar{\partial}}(Z) \Big).
$$
\end{rem}

We are ready to prove Theorem \ref{Thm2}.

\begin{proof}[Proof of Theorem \ref{Thm2}]
Let $f:\tilde{X}\dashrightarrow X$ be a bimeromorphic map of compact complex threefolds.
From Theorem \ref{WFT},
there exists a diagram
\begin{equation}\label{WFT-sequence}
\tilde{X}=Y_0 \mathop{\dashrightarrow}^{\phi_1}  Y_1   \mathop{\dashrightarrow}^{\phi_2} \cdots  \mathop{\dashrightarrow}^{\phi_{l-1}} Y_{l-1}   \mathop{\dashrightarrow}^{\phi_{l}} Y_l=X,
\end{equation}
where either $\phi_{i}: Y_{i-1}\dashrightarrow Y_i$ or $\phi_{i}^{-1}: Y_{i}\dashrightarrow Y_{i-1}$ is a morphism obtained by blowing up of a compact complex threefold along a point or a smooth curve.
According to the diagram (\ref{WFT-sequence}), to verify $\Delta^{k}(\tilde{X})=\Delta^{k}(X)$ for any $k\in\{0,1,2,3,4,5,6\}$, it suffices to prove $\Delta^{k}(Y_i)=\Delta^{k}(Y_{i+1})$.

Note that $X$ is a compact complex threefold.
Without loss of the generality,
we may assume that $g: Y\to X$ is a blow-up of $X$ at a point or along a smooth curve
then we divide the proof in two cases.

\paragraph{\textbf{Case 1}}
Assume that $g:Y\rightarrow X$ is a blow-up of $X$ at a point.
From Corollary \ref{cor3.4} we have
\begin{equation}\label{b-l-pt}
h_{BC}^{p,q}(Y) = \left\lbrace
           \begin{array}{c l}
             h_{BC}^{p,q}(X)+1, & \text{if $1\leq p,q\leq2$ and $p=q$};\\
             h_{BC}^{p,q}(X), &\text{otherwise}.
           \end{array}
         \right.
\end{equation}
If $k\in\{0,1,3,5,6\}$, then from (\ref{b-l-pt}) and the de Rham blow-up formula \cite[Theorem 7.31]{Voi02} we obtain
\begin{eqnarray*}
  \Delta^{k}(Y)&=&h^{k}_{BC}(Y)-h^{6-k}_{BC}(Y)-2b_{k}(Y)\\
   &=& h^{k}_{BC}(X)-h^{6-k}_{BC}(X)-2b_{k}(X)\\
   &=& \Delta^{k}(X).
\end{eqnarray*}
Moreover, in the case of $k\in\{2,4\}$, via a direct check we have
\begin{eqnarray*}
                    \Delta^{k}(Y) &=& h^{k}_{BC}(Y)+h^{6-k}_{BC}(Y)-2b_{k}(Y) \\
                     &=& (h^{k}_{BC}(X)+1)+(h^{6-k}_{BC}(X)+1)-2(b_{k}(X)+1) \\
                     &=& \Delta^{k}(X).
\end{eqnarray*}

\paragraph{\textbf{Case 2}}
Now we assume that $g$ is the blow-up of $X$ along a smooth curve $C$.
On one hand, since $C$ is a smooth curve,
by Lemma \ref{lem4.2} and for the dimension reason we get
\begin{equation}\label{equ4.3}
h_{BC}^{p,q}(Y)=h_{BC}^{p,q}(X)+h^{p-1,q-1}_{\bar{\partial}}(C)
\end{equation}
and
\begin{equation}\label{equ4.4}
h^{3-p,3-q}_{BC}(Y)=h^{3-p,3-q}_{BC}(X)+h^{2-p,2-q}_{\bar{\partial}}(C)
\end{equation}
where $2\leq p+q\leq4$;
otherwise,
$
h_{BC}^{p,q}(Y)=h_{BC}^{p,q}(X).
$
On the other hand, according to the de Rham blow-up formula we get
\begin{equation}\label{betti}
b_{k}(Y) = \left\lbrace
           \begin{array}{c l}
             b_{k}(X)+b_{k-2}(C), & \text{if $2\leq k\leq4$};\\
             b_{k}(X), & \text{otherwise}.
           \end{array}
         \right.
\end{equation}
If $k\in\{0,1,3,5,6\}$ then via a straightforward computation we can verify that the equality $\Delta^{k}(Y)=\Delta^{k}(X)$ holds.
If $k=2$ or $k=4$, then from the Poincar\'{e} duality on $C$ we have $b_{4-k}(C)-b_{k-2}(C)=0$;
furthermore, by (\ref{equ4.3})-(\ref{betti}) we get
\begin{eqnarray*}
                    \Delta^{k}(Y) &=& h^{k}_{BC}(Y)+h^{6-k}_{BC}(Y)-2b_{k}(Y) \\
                     &=& \bigl(h^{k}_{BC}(X)+b_{k-2}(C)\bigr)+\bigl(h^{6-k}_{BC}(X)+b_{4-k}(C)\bigr)-2\bigl(b_{k}(X)+b_{k-2}(C)\bigr) \\
                     &=& \Delta^{k}(X)+\bigl(b_{4-k}(C)-b_{k-2}(C)\bigr)\\
                     &=& \Delta^{k}(X).
\end{eqnarray*}
The results in \textbf{Case 1} and \textbf{Case 2} mean $\Delta^{k}(Y)=\Delta^{k}(X)$ for any $k\in\{0,1,2,3,4,5,6\}$.
Likewise, we can show that $\Delta^{k}(Y_{i})=\Delta^{k}(Y_{i+1})$.
This implies $\Delta^{k}(\tilde{X})=\Delta^{k}(X)$, i.e., the non-K\"{a}hlerness degrees for compact complex threefolds are bimeromorphic invariants.
According to Theorem \ref{AT13}, we get that $\tilde{X}$ is a
$\partial\bar{\partial}$-manifold if and only if $X$ is a $\partial\bar{\partial}$-manifold and this completes the proof.
\end{proof}

For compact complex threefolds, Theorem \ref{Thm2} can be thought of as a generalization of \cite[Theorem 5.22]{DGMS75}.
In particular, following the steps in the proof of Theorem \ref{Thm2} we can show that the revised non-K\"{a}hlerness degrees are also bimeromorphic invariants for threefolds.
\begin{ex}[Iwasawa manifold]\label{ex4.2}
Recall that the 3-dimensional complex Heisenberg group $\mathbb{H}(3;\mathbb{C})$ is defined to be the Lie subgroup of $\mathrm{GL}(3;\mathbb{C})$ whose elements have the form
$$
g=\left(
  \begin{array}{ccc}
    1 & a & b \\
    0 & 1 & c \\
    0 & 0 & 1 \\
  \end{array}
\right)
$$
where $a,b,c$ are complex numbers.
It is noteworthy that $\mathbb{H}(3;\mathbb{C})$ is a connected and simply connected complex nilpotent Lie group.
Let $\Gamma\subset\mathbb{H}(3;\mathbb{C})$ be a discrete subgroup such that all entries are Gaussian integers.
The Iwasawa manifold is defined to be quotient space
$
\mathbb{I}_{3}:=\mathbb{H}(3;\mathbb{C})/\Gamma,
$
which is a compact 6-dimensional smooth manifold.
Besides, there exists a $\mathbb{H}(3;\mathbb{C})$-invariant complex structure $J_{0}$ on $\mathbb{I}_{3}$ determined by the standard complex structure on $\mathbb{C}^{3}$.
This enables $(\mathbb{I}_{3},J_{0})$ to be a \emph{non-K\"{a}hler, no-formal, and holomorphically parallelizable} manifold.
In 1975, Nakamura \cite{Nak75} proved that the holomorphically parallelizable property of $\mathbb{I}_{3}$ is not stable under small deformations and the deformed objects of $\mathbb{I}_{3}$ can be divided into three classes.
Via computing the Bott-Chern cohomology of Iwasawa manifold $\mathbb{I}_{3}$ and of its small deformations, Angella \cite{Ang13} refined Nakamura's classification.
In particular, we have the following table (cf. \cite{AT13,AT17}).

\begin{table}[h]
\centering
\label{my-label}
\small{
\begin{tabular}{|l|l|l|l|l|l|l|}
\hline
& $\mathbb{I}_{3}$ & (i) & (ii.a) & (ii.b) & (iii.a) & (iii.b) \\ \hline
$\Delta^{1}=\Delta^{5}$ &  2  &   2  &    2    &    2    &    2     &    2     \\ \hline
$\Delta^{2}=\Delta^{4}$ &  6  &   6  &    3    &    2    &    1     &    0     \\ \hline
$\Delta^{3}$ &  8  &   8  &    8    &    8    &    8     &    8     \\ \hline
\end{tabular}
}
\centering

\;
\;
\;

\caption{Non-K\"{a}hlerness degrees of $\mathbb{I}_{3}$ and its small deformations}
\end{table}
\end{ex}
\section{Concluding remark}\label{remark}

Based on the Weak Factorization Theorem (Theorem \ref{WFT}), to verify the bimeromorphic invariance of the $\partial\bar{\partial}$-Lemma in higher dimension case, we need to deal with two problems:
to confirm whether the $\partial\bar{\partial}$-Lemma survive the closed complex submanifolds
and prove a general \emph{projective bundle formula} for Bott-Chern cohomology.
More precisely, let $X$ be a compact complex manifold with $\mathrm{dim}_{\mathbb{C}}\,X=n>3$.
It is unknown whether the following explicit formula still holds
\begin{equation}\label{blow-up-formula}
H_{BC}^{p,q}(\tilde{X}) \cong  H_{BC}^{p,q}(X)\oplus \Big(\bigoplus_{i=1}^{r-1} H^{p-i,q-i}_{BC}(Z) \Big);
\end{equation}
however, it seems to be reasonable.
If the formula (\ref{blow-up-formula}) is true
and any closed complex submanifold of a $\partial\bar{\partial}$-manifold
is also a $\partial\bar{\partial}$-manifold,
then by Theorem \ref{thm2.4} we can verify that the $\partial\bar{\partial}$-Lemma is a bimeromorphic invariant of compact complex manifolds.

For a compact non-K\"{a}hler complex surface the non-K\"{a}hlerness degrees are trivial bimeromorphic invariants since $\Delta^{1}=0$ and $\Delta^{2}=2$.
Let $X$ be a compact complex threefold.
By Theorem \ref{Thm2} we know that for each $k\in\{0,1,2,3,4,5,6\}$, the non-K\"{a}hlerness degree $\Delta^{k}(X)$ is a bimeromorphic invariant.
Example \ref{ex4.2} means that for compact non-K\"{a}hler threefolds the non-K\"{a}hlerness degrees, as bimeromorphic invariants, are \emph{non-trivial}.
As a direct consequence, we get that the Iwasawa manifold $\mathbb{I}_{3}$ and its small deformations of class (ii) (resp. class (iii)) are not bimeromorphically equivalent.
From the bimeromorphic geometry point of view, a natural problem is:
\begin{prob}
Whether there exists a uniform upper bound for (revised) non-K\"{a}hlerness degrees of compact non-K\"{a}hler threefolds;
moreover, how to classify compact non-K\"{a}hler threefolds (up to the bimeromorphic equivalence) with respect to the (revised) non-K\"{a}hlerness degrees?
\end{prob}
Note that our proof of the bimeromorphic invariance of non-K\"{a}hlerness degrees for threefolds depends on the explicit Bott-Chern blow-up formula at a point or along a smooth curve.
Unfortunately, our proof does not enable us to show the bimeromorphic invariance of non-K\"{a}hlerness degrees for compact complex manifolds with complex dimensions larger than 3.


\subsection*{Update (August 2018)}
The Bott-Chern projective bundle formula has been claimed by Dr. J. Stelzig in \cite{Ste18} using a structure theory of double complexes \cite{Ste}.
Combining Theorem \ref{Thm1} with the result of Stelzig,
one can show that the formula \eqref{blow-up-formula} holds.
Consequently, Problem \ref{problem} descends to the following fundamental problem which is still open; see also
\cite{Ale17, ASTT17, Fri17, RYY17}.

\begin{prob}
Let $X$ be a $\partial\bar{\partial}$-manifold,
is it true that the $\partial\bar{\partial}$-Lemma holds on any closed complex submanifold $Z$ of $X$?
\end{prob}
Notice that if the above problem is confirmed positively then from the blow-up formula \eqref{blow-up-formula}, Theorem \ref{AT13} (or \ref{thm2.4}) and Weak Factorization Theorem we can verify that the $\partial\bar{\partial}$-Lemma is a bimeromorphic invariant of compact complex manifolds.
\appendix
\section{Properties of $\mathscr{D}^{\bullet}_{\tilde{X}}$}
\subsection{Double complex reinterpretation of $\mathscr{D}^{\bullet}_{\tilde{X}}$}
\label{dou-com}
With the situation in Section \ref{blow-up-formula-BC},
we consider the sheaf complex
\begin{equation*}
\xymatrix@C=0.4cm{\mathscr{D}^{\bullet}_{X}:\,
0\ar[r]^{} & \mathcal{K}_{X}^{0}\oplus \bar{\mathcal{K}}_{X}^{0} \ar[r]^{} &\mathcal{K}_{X}^{1}\oplus \bar{\mathcal{K}}_{X}^{1}
  \ar[r]^{}& \cdots  \ar[r]^{}& \mathcal{K}_{X}^{p-1}\oplus \bar{\mathcal{K}}_{X}^{p-1} \ar[r]^{} & \bar{\mathcal{K}}_{X}^{p} \ar[r]^{} & \cdots  \bar{\mathcal{K}}_{X}^{q-1} \ar[r]^{} & 0.}
\end{equation*}
Set
$
\K_{X}^{s,t}=\ker\big(\E_{X}^{s,t} \stackrel{\varphi}\longrightarrow  \iota_{\ast}\E_{Z}^{s,t}\big).
$
Due to the Dolbeault-Grothendieck lemma \cite[Lemma 3.29]{Dem12}, we have a \emph{fine} resolution
\begin{equation}\label{re-bo-ch-reso}
\xymatrix@C=0.4cm{
\mathcal{K}_{X}^{0,\bullet}\oplus \bar{\mathcal{K}}_{X}^{0,\bullet} \ar[r]^{} &\mathcal{K}_{X}^{1,\bullet}\oplus \bar{\mathcal{K}}_{X}^{1,\bullet}
\ar[r]^{}& \cdots  \ar[r]^{}& \mathcal{K}_{X}^{p-1,\bullet}\oplus
\bar{\mathcal{K}}_{X}^{p-1,\bullet} \ar[r]^{} & \bar{\mathcal{K}}_{X}^{p,\bullet} \ar[r]^{} & \cdots \ar[r]^{} & \bar{\mathcal{K}}_{X}^{q-1,\bullet} & \\
 \mathcal{K}^{0}_{X}\oplus
\bar{\mathcal{K}}^{0}_{X}
\ar@{^{(}->}[u]\ar[r]^{}
&\mathcal{K}_{X}^{1}\oplus \bar{\mathcal{K}}_{X}^{1}
\ar@{^{(}->}[u]\ar[r]^{}
& \cdots  \ar[r]^{}& \mathcal{K}_{X}^{p-1}\oplus \bar{\mathcal{K}}_{X}^{p-1}
\ar@{^{(}->}[u]\ar[r]^{}
& \bar{\mathcal{K}}_{X}^{p}
\ar@{^{(}->}[u]\ar[r]^{} & \cdots \ar[r]^{} & \bar{\mathcal{K}}_{X}^{q-1}
\ar@{^{(}->}[u].}
\end{equation}

For the simplicity, we set
$\mathrm{K}^{s,t}=\Gamma(X, \K_{X}^{s,t})$,
and $\bar{\mathrm{K}}^{s,t}=\Gamma(X, \bar{\K}_{X}^{s,t})$.
Moreover, for any $0\leq t\leq n$, we define
$$
\mathbb{K}_{X}^{s,t}=\mathrm{K}^{s,t}\oplus\bar{\mathrm{K}}^{s,t},\;\;
\mathrm{for\,\,\,any}\,\,0\leq s\leq p-1,
$$
$$
\mathbb{K}_{X}^{s,t}=\bar{\mathrm{K}}^{s,t},
\;\;\quad\quad\mathrm{for\,\,\,any}\,\,p\leq s\leq q-1.
$$
By taking the global sections of the resolutions
\eqref{re-bo-ch-reso},
we obtain a double complex $\mathbb{K}^{\bullet,\bullet}_{X}$
with the horizontal and vertical differentials
$$
D_{1}=\partial\oplus \bar{\partial}:
\mathbb{K}_{X}^{s,t}\rightarrow  \mathbb{K}_{X}^{s+1,t},\;\;\;\;\;\;
D_{2}=\bar{\partial}\oplus  \partial:
\mathbb{K}_{X}^{s,t}\rightarrow  \mathbb{K}_{X}^{s,t+1},
$$
satisfying $D_{2}^{2}=0=D_{1}^{2}$ and $D_{2}D_{1}+D_{1}D_{2}=0$.
Denote the simple complex by
$$
\mathbb{K}_{X}^{l}=\bigoplus_{s+t=l}\mathbb{K}_{X}^{s,t}.
$$
Then we have an isomorphism
$$
\mathbb{H}(X,\mathscr{D}^{\bullet}_{X})\cong
H(\mathbb{K}^{\bullet}_{X}).
$$
Analogously, we can construct a double complex $\mathbb{K}^{\bullet,\bullet}_{\tilde{X}}$ associated to the pair $(\tilde{X},E)$.
Moreover, there  exists an isomorphism
$$
\mathbb{H}(\tilde{X},\mathscr{D}^{\bullet}_{\tilde{X}})\cong
H(\mathbb{K}^{\bullet}_{\tilde{X}}).
$$
\subsection{Higher direct images}
With the situation in Section \ref{blow-up-formula-BC},
we have the blow-up diagram
\begin{equation*}\label{b-l-d-appendix}
\xymatrix{
E \ar[d]_{\rho:=\pi|_{E}} \ar@{^{(}->}[r]^{\tilde{\iota}} & \tilde{X}\ar[d]^{\pi}\\
 Z \ar@{^{(}->}[r]^{\iota} & X.
}
\end{equation*}
Recall that the pullback of differential forms defines a natural surjective morphism of sheaves
$\tilde{\varphi}:
\Omega_{\tilde{X}}^{s}
\rightarrow
\tilde{\iota}_{\ast}\Omega_{E}^{s}$.
Thus, there exists a short exact sequence
\begin{equation}\label{re-E}
\xymatrix{
  0 \ar[r] & \K^{s}_{\tilde{X}} \ar[r]^{} & \Omega_{\tilde{X}}^{s} \ar[r]^{\tilde{\varphi}} & \tilde{\iota}_{\ast}\Omega_{E}^{s} \ar[r] & 0,}
\end{equation}
and we call $\K^{s}_{\tilde{X}}$ the \emph{relative Dolbeault sheaf} with respect to $(\tilde{X},E)$.
The sequence \eqref{re-E} induces a long exact sequence of higher direct images along $\pi$:
\begin{equation*}
\small{
\xymatrix@C=0.5cm{
\cdots\ar[r] &   R^{r-1}\pi_{\ast}\Omega^{s}_{\tilde{X}}
  \ar[r]^{} & R^{r-1}\pi_{\ast}\tilde{\iota}_{\ast}\Omega^{s}_{E}
  \ar[r] & R^{r}\pi_{\ast}\K^{s}_{\tilde{X}}
  \ar[r]^{} & R^{r}\pi_{\ast}\Omega^{s}_{\tilde{X}}
  \ar[r]^{}& R^{r}\pi_{\ast}\tilde{\iota}_{\ast}\Omega^{s}_{E}
   \ar[r] &\cdots.}
 }
\end{equation*}
In general, the higher direct image
$R^{r}\pi_{\ast}\Omega^{s}_{\tilde{X}}$ is not vanishing,
unless $s=0$ or $n=\mathrm{dim}_{C}\,\tilde{X}$.
However, the higher direct images of the relative Dolbeault sheaves always vanish.

The following proposition was pointed out to us by Dr. L. Meng \cite[Lemma 2.4]{Men18}
and we figured out a detailed proof for the bundle-valued case in \cite[Lemma 4.4]{RYY}.

\begin{prop}\label{hi-dir-0}
$R^{r}\pi_{\ast}\K_{\tilde{X}}^{s}=0$ for any $r\geq 1$.
\end{prop}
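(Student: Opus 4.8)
The plan is to apply the derived pushforward $R\pi_*$ to the defining short exact sequence \eqref{re-E} and to reduce the vanishing to a comparison of higher direct images over the exceptional divisor. Since $\tilde\iota\colon E\hookrightarrow\tilde X$ is a closed immersion and $\pi\circ\tilde\iota=\iota\circ\rho$, functoriality of derived pushforward gives $R^{r}\pi_*\bigl(\tilde\iota_*\Omega_E^{s}\bigr)\cong\iota_*\bigl(R^{r}\rho_*\Omega_E^{s}\bigr)$ for every $r$. Hence the long exact sequence of higher direct images attached to \eqref{re-E} reads
\begin{equation*}
\cdots\to R^{r-1}\pi_*\Omega_{\tilde X}^{s}\xrightarrow{a_{r-1}}\iota_*R^{r-1}\rho_*\Omega_E^{s}\to R^{r}\pi_*\K_{\tilde X}^{s}\to R^{r}\pi_*\Omega_{\tilde X}^{s}\xrightarrow{a_{r}}\iota_*R^{r}\rho_*\Omega_E^{s}\to\cdots,
\end{equation*}
where $a_r$ is induced by the restriction $\tilde\varphi$. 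It therefore suffices to prove two statements: that $a_0\colon\pi_*\Omega_{\tilde X}^{s}\to\iota_*\rho_*\Omega_E^{s}$ is surjective, and that $a_r$ is an isomorphism for every $r\ge1$. Granting these, a short diagram chase closes the argument, as explained at the end.

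The surjectivity of $a_0$ is the easy half. Because $\pi$ is a blow-up with smooth centre, pullback induces $\pi_*\Omega_{\tilde X}^{s}=\Omega_X^{s}$; on the exceptional side the relative Euler sequence of the projective bundle $\rho\colon E=\mathbb P(N_{Z/X})\to Z$ equips $\Omega_E^{s}$ with a filtration whose graded pieces are $\rho^*\Omega_Z^{a}\otimes\Omega_{E/Z}^{s-a}$, and the fibrewise vanishing $H^{0}(\mathbb P^{c-1},\Omega^{b})=0$ for $b\ge1$ (with $c=\mathrm{codim}_{\mathbb C}Z$) leaves only the term $a=s$, whence $\rho_*\Omega_E^{s}=\Omega_Z^{s}$. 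Under these identifications $a_0$ becomes precisely the restriction morphism $\varphi\colon\Omega_X^{s}\to\iota_*\Omega_Z^{s}$, whose surjectivity is the content of the Lemma establishing \eqref{Z-pre-exact-seq}.

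The main obstacle is the isomorphism $a_r\colon R^{r}\pi_*\Omega_{\tilde X}^{s}\xrightarrow{\ \cong\ }\iota_*R^{r}\rho_*\Omega_E^{s}$ for $r\ge1$; this is the Hodge-theoretic heart of the blow-up and is where the real computation sits. Since higher direct images are local on the base, I would localise to the standard model $X=\mathbb{C}^{c}\times\mathbb{C}^{n-c}$ with $Z=\{0\}\times\mathbb{C}^{n-c}$, so that $\tilde X=\widetilde{\mathbb{C}^{c}}\times\mathbb{C}^{n-c}$ and $E=\mathbb P^{c-1}\times\mathbb{C}^{n-c}$ with $\rho$ the second projection. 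On this model both sides are supported on $Z$ and computable: the fibre cohomology of $\mathbb P^{c-1}$ is governed by the Bott formula $H^{r}(\mathbb P^{c-1},\Omega^{b})=\mathbb C$ exactly when $r=b$ and $0\le b\le c-1$ (and vanishes otherwise), which pins down $R^{r}\rho_*\Omega_E^{s}$ through the filtration above, while the explicit geometry of $\widetilde{\mathbb{C}^{c}}=\mathrm{Tot}\bigl(\mathcal{O}_{\mathbb P^{c-1}}(-1)\bigr)$ yields $R^{r}\pi_*\Omega_{\tilde X}^{s}$; one then checks that $\tilde\varphi$ matches the two computations term by term. As an internal consistency check, feeding the resulting identifications into the Leray spectral sequences of $\pi$ and of $\rho$ reproduces the Dolbeault blow-up formula $H^{s,q}(\tilde X)\cong H^{s,q}(X)\oplus\bigoplus_{i=1}^{c-1}H^{s-i,q-i}(Z)$, which corroborates that $a_r$ must be an isomorphism in positive degrees.

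Finally, with surjectivity of $a_0$ and bijectivity of $a_r$ $(r\ge1)$ in hand, exactness of the displayed sequence squeezes $R^{r}\pi_*\K_{\tilde X}^{s}$ between $\mathrm{coker}(a_{r-1})$ and $\ker(a_r)$: for $r\ge2$ the map $a_{r-1}$ is an isomorphism, hence surjective, and $a_r$ is an isomorphism, hence injective; for $r=1$ the surjectivity of $a_0$ and the injectivity of $a_1$ play the same two roles. In every case both flanking terms vanish, so $R^{r}\pi_*\K_{\tilde X}^{s}=0$ for all $r\ge1$.
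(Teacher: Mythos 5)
Your proof is correct and follows essentially the same route as the proof this paper relies on: the paper does not prove Proposition \ref{hi-dir-0} in the text but defers it to \cite[Lemma 2.4]{Men18} and \cite[Lemma 4.4]{RYY}, and those arguments likewise run the long exact sequence of higher direct images of \eqref{re-E} and reduce to the local model, where the content is exactly your claim that the restriction-induced maps $a_{r}\colon R^{r}\pi_{\ast}\Omega^{s}_{\tilde{X}}\to\iota_{\ast}R^{r}\rho_{\ast}\Omega^{s}_{E}$ are isomorphisms for $r\geq1$ (surjective for $r=0$). Just note that this term-by-term matching, which you state with a plausible plan (Bott's formula through the Euler-sequence filtration, plus the geometry of $\mathrm{Tot}(\mathcal{O}_{\mathbb{P}^{c-1}}(-1))$) but do not carry out, is precisely where all the work in the cited references sits, so your write-up is a sound outline rather than a self-contained proof.
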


In particular, for the sheaf
$\mathscr{D}_{X}^{s}=
\K^{s}_{\tilde{X}}\oplus\bar{\K}^{s}_{\tilde{X}}$
the following result holds.
\begin{lem}\label{higher-di-img}
For any $s\geq 0$,
$\pi^{\ast}: \mathscr{D}_{X}^{s} \stackrel{\simeq}\longrightarrow \pi_{\ast}\mathscr{D}_{\tilde{X}}^{s}$
and $R^{r}\pi_{\ast}\mathscr{D}_{\tilde{X}}^{s}=0$ for $r\geq 1$.
\end{lem}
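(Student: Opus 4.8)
The plan is to deduce both assertions from Proposition~\ref{hi-dir-0}, the classical identity $\pi^{\ast}:\Omega_{X}^{s}\stackrel{\simeq}\longrightarrow\pi_{\ast}\Omega_{\tilde{X}}^{s}$, and a projective-bundle computation. Since $\mathscr{D}_{X}^{s}$ and $\mathscr{D}_{\tilde{X}}^{s}$ are finite direct sums of the sheaves $\K_{X}^{s},\bar{\K}_{X}^{s}$ and $\K_{\tilde{X}}^{s},\bar{\K}_{\tilde{X}}^{s}$, and since $\pi_{\ast}$ and each $R^{r}\pi_{\ast}$ are additive functors, it suffices to prove the two statements for the single summand $\K_{\tilde{X}}^{s}$; the anti-holomorphic summand $\bar{\K}_{\tilde{X}}^{s}$ is then handled by applying complex conjugation, which is an $\mathbb{R}$-linear isomorphism of sheaves intertwining $\pi_{\ast}$ and $R^{r}\pi_{\ast}$. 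For the vanishing this reduction is immediate: Proposition~\ref{hi-dir-0} gives $R^{r}\pi_{\ast}\K_{\tilde{X}}^{s}=0$ for $r\geq 1$, hence $R^{r}\pi_{\ast}\bar{\K}_{\tilde{X}}^{s}=0$ by conjugation, and therefore $R^{r}\pi_{\ast}\mathscr{D}_{\tilde{X}}^{s}=0$ for $r\geq 1$.

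For the isomorphism in degree $r=0$, I would first push forward the defining short exact sequence \eqref{re-E} along $\pi$. Left exactness of $\pi_{\ast}$ together with the vanishing $R^{1}\pi_{\ast}\K_{\tilde{X}}^{s}=0$ from Proposition~\ref{hi-dir-0} turns the associated long exact sequence into a short exact sequence
\[
0\longrightarrow \pi_{\ast}\K_{\tilde{X}}^{s}\longrightarrow \pi_{\ast}\Omega_{\tilde{X}}^{s}\longrightarrow \pi_{\ast}\tilde{\iota}_{\ast}\Omega_{E}^{s}\longrightarrow 0.
\]
Using the blow-up diagram \eqref{b-l-d}, that is $\pi\circ\tilde{\iota}=\iota\circ\rho$, I identify $\pi_{\ast}\tilde{\iota}_{\ast}\Omega_{E}^{s}\cong\iota_{\ast}\rho_{\ast}\Omega_{E}^{s}$. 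The pullback morphisms then furnish a commutative ladder from \eqref{Z-pre-exact-seq} to the displayed sequence, whose vertical arrows are $\pi^{\ast}$, $\pi^{\ast}$, and $\iota_{\ast}\rho^{\ast}$.

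The middle arrow $\pi^{\ast}:\Omega_{X}^{s}\to\pi_{\ast}\Omega_{\tilde{X}}^{s}$ is an isomorphism by the classical fact that holomorphic $s$-forms extend across the exceptional divisor of a blow-up along a smooth center. For the right-hand arrow, since $\rho:E=\mathbb{P}(N_{Z/X})\to Z$ is a $\mathbb{P}^{r-1}$-bundle, the relative cotangent sequence $0\to\rho^{\ast}\Omega_{Z}^{1}\to\Omega_{E}^{1}\to\Omega_{E/Z}^{1}\to 0$ induces a filtration of $\Omega_{E}^{s}$ with graded pieces $\rho^{\ast}\Omega_{Z}^{s-j}\otimes\Omega_{E/Z}^{j}$; as $H^{0}(\mathbb{P}^{r-1},\Omega_{\mathbb{P}^{r-1}}^{j})=0$ for $j\geq 1$, the fibrewise pushforward annihilates the terms with $j\geq 1$, and the projection formula together with $\rho_{\ast}\CO_{E}=\CO_{Z}$ yields $\rho^{\ast}:\Omega_{Z}^{s}\stackrel{\simeq}\longrightarrow\rho_{\ast}\Omega_{E}^{s}$. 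Thus both the middle and right verticals are isomorphisms, and the short five lemma forces the left vertical $\pi^{\ast}:\K_{X}^{s}\to\pi_{\ast}\K_{\tilde{X}}^{s}$ to be an isomorphism as well, which completes the $r=0$ case.

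The genuinely substantive input is Proposition~\ref{hi-dir-0}: once the higher direct images of the relative Dolbeault sheaves are known to vanish, everything else is formal homological algebra plus standard projective-bundle bookkeeping. Accordingly, I expect the main obstacle to lie not in the present lemma but in that proposition (here imported from \cite{Men18, RYY}); the only other point requiring care is to confirm that the classical identity $\pi_{\ast}\Omega_{\tilde{X}}^{s}\cong\Omega_{X}^{s}$ and the relative-form vanishing $\rho_{\ast}\Omega_{E/Z}^{j}=0$ for $j\geq 1$ are invoked in the correct degrees.
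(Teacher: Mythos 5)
Your proposal is correct, and for the vanishing statement it coincides with the paper's proof: the same reduction to the summand $\K_{\tilde{X}}^{s}$ via additivity of $R^{r}\pi_{\ast}$, Proposition \ref{hi-dir-0}, and complex conjugation for $\bar{\K}_{\tilde{X}}^{s}$. Where you genuinely diverge is the isomorphism $\pi^{\ast}:\mathscr{D}_{X}^{s}\stackrel{\simeq}\longrightarrow\pi_{\ast}\mathscr{D}_{\tilde{X}}^{s}$. The paper writes down the sheaf morphism $\pi^{\ast}:\K_{X}^{s}\rightarrow\pi_{\ast}\K_{\tilde{X}}^{s}$ explicitly on sections and then simply \emph{cites} \cite[Lemma 4.4]{RYY} for the fact that it is an isomorphism of sheaves; no five-lemma ladder appears. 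You instead derive the isomorphism internally: pushing forward \eqref{re-E} and using $R^{1}\pi_{\ast}\K_{\tilde{X}}^{s}=0$ (again Proposition \ref{hi-dir-0}) to get a short exact bottom row, comparing it with \eqref{Z-pre-exact-seq} via the ladder with verticals $\pi^{\ast}$, $\pi^{\ast}$, $\iota_{\ast}\rho^{\ast}$, and invoking the classical identity $\pi_{\ast}\Omega_{\tilde{X}}^{s}\cong\Omega_{X}^{s}$ (Hartogs across $Z$ of codimension $\geq 2$) together with $\rho_{\ast}\Omega_{E}^{s}\cong\Omega_{Z}^{s}$ from the relative cotangent filtration, $\rho_{\ast}\CO_{E}=\CO_{Z}$, the projection formula, and $H^{0}(\mathbb{C}\mathrm{P}^{r-1},\Omega^{j})=0$ for $j\geq 1$ (note the last pushforward computation needs only left exactness of $\rho_{\ast}$ applied to the filtration, plus the direct fibrewise restriction argument for $\rho_{\ast}\Omega_{E/Z}^{j}=0$, so no base-change machinery is required in the analytic category). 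All of these steps check out, including the commutativity of the ladder, which follows from $\pi\circ\tilde{\iota}=\iota\circ\rho$ and functoriality of pullback. The trade-off: the paper's route is shorter but outsources the key sheaf isomorphism to \cite{RYY}, whereas yours is self-contained modulo Proposition \ref{hi-dir-0} (itself imported from \cite{Men18,RYY}, as the paper also does), so you correctly identify that proposition as the single genuinely substantive external input; your version also makes transparent exactly which classical blow-up and projective-bundle facts the lemma rests on.
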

\begin{proof}
Note that the direct image functor commutes with the direct sum
$$
\pi_{\ast}(\K^{s}_{\tilde{X}}\oplus\bar{\K}^{s}_{\tilde{X}})
=
\pi_{\ast}\K^{s}_{\tilde{X}}\oplus\pi_{\ast}
\bar{\K}^{s}_{\tilde{X}}.
$$
To prove the assertion holds, it suffices to show that
$\pi_{\ast}\K^{s}_{\tilde{X}}\cong\K^{s}_{X}$
and then we get
$\pi_{\ast}\bar{\K}^{s}_{\tilde{X}}\cong \bar{\K}^{s}_{X}$
by complex conjugation.
More precisely, we only need to show that the induced morphism of stalks
$$
\pi^{\ast}_{x}: (\mathscr{D}_{X}^{s})_{x}
\longrightarrow
(\pi_{\ast}\mathscr{D}_{\tilde{X}}^{s})_{x},
\,\,\,\mathrm{for\,\,any}\,\,x\in X
$$
is isomorphic.

By definition, for any open subset $V\subset X$ we have
$$
\Gamma(V,\pi_{\ast}\K^{s}_{\tilde{X}})
=
\{\tilde{\alpha}\in \Gamma(\tilde{V}, \Omega^{s}(\tilde{V})) \,|\,(\tilde{\iota}_{\tilde{V}\cap E})^{\ast}\tilde{\alpha}=0\},\,\,
\textmd{if}\,\,\,V\cap Z\neq\emptyset;
$$
and
$$
\Gamma(V,\pi_{\ast}\K^{s}_{\tilde{X}})
=
\Gamma(\tilde{V}, \K_{\tilde{X}}^{s})\cong\Gamma(V, \K_{X}^{s}),\,\,\,
\textmd{if}\,\,\,V\cap Z=\emptyset;
$$
where $\tilde{V}=\pi^{-1}(V)$.
If $\beta$ is a holomorphic $s$-form on $V$ such that $(\iota_{V\cap Z})^{\ast}\beta=0$,
the the pullback $\tilde{\beta}:=(\pi|_{\tilde{V}})^{\ast}\beta$ is a holomorphic $s$-form on $\tilde{V}$ that is satisfying
$(\tilde{\iota}_{\tilde{V}\cap E})^{\ast}\tilde{\beta}=0.$
This implies that the pullback $\pi^{\ast}$ induces a sheaf morphism
$\pi^{\ast}: \K_{X}^{s}\rightarrow \pi_{\ast}\K_{\tilde{X}}^{s}$
by assigning the map
\begin{equation*}
  \pi^{\ast}(V):\Gamma(V, \K_{X}^{s})
  \rightarrow
  \Gamma(V, \pi_{\ast}\K_{\tilde{X}}^{s}),\,\,\,
  \alpha\mapsto (\pi|_{\tilde{V}})^{\ast} \alpha
\end{equation*}
to each open subset $V\subset X$.
Moreover, from \cite[Lemma 4.4]{RYY} it can be shown that the sheaf morphism
$\pi^{\ast}: \K_{X}^{s}\rightarrow \pi_{\ast}\K_{\tilde{X}}^{s}$
is actually isomorphic.

It remains to prove
$R^{r}\pi_{\ast}\mathscr{D}_{\tilde{X}}^{s}=0$
for any $r\geq 1$.
According to Proposition \ref{hi-dir-0}, we have
$R^{r}\pi_{\ast}\K^{s}_{\tilde{X}}=0$ for any $r\geq 1$.
We claim $R^{r}\pi_{\ast}\bar{\K}^{s}_{\tilde{X}}=0$ for any $r\geq 1$.
From definition, for a given sheaf $\mathcal{F}$ on $\tilde{X}$,
its higher direct image $R^{r}\pi_{\ast}\mathcal{F}$ is
the sheafification of the presheaf
$$
V\mapsto H^{r}(\tilde{V}, \mathcal{F})
$$
for any open set $V\subset X$ and $\tilde{V}:=\pi^{-1}(V)$.
On account of $R^{r}\pi_{\ast}\K_{\tilde{X}}^{s}=0$,
for every point $x\in X$ there is an open neighborhood $V$ of $x$ such that
$
H^{r}(\tilde{V}, \K_{\tilde{V}}^{s})=0.
$
It means that the complex
$(\Gamma(\tilde{V}, \K_{\tilde{V}}^{s, \bullet}), \bar{\partial})$
is exact, and hence its complex conjugation
$(\Gamma(\tilde{V}, \bar{\K}_{\tilde{V}}^{s, \bullet}), \partial)$.
This implies
$H^{r}(\tilde{V}, \bar{\K}_{\tilde{V}}^{s})=0$
which concludes the claim
$R^{r}\pi_{\ast}\bar{\K}_{\tilde{V}}^{s}=0$.
By the fact that the higher direct images commutes with direct sums,
we have
$$
R^{r}\pi_{\ast} \mathscr{D}_{\tilde{X}}^{s}
=R^{r}\pi_{\ast}(\K^{s}_{\tilde{X}}\oplus \bar{\K}^{s}_{\tilde{X}})
=R^{r}\pi_{\ast}\K^{s}_{\tilde{X}} \oplus R^{r}\pi_{\ast}\bar{\K}^{s}_{\tilde{X}}
=0\oplus 0.
$$
This finishes the proof of Lemma \ref{higher-di-img}.
\end{proof}


\end{document}